\newcommand{\RR}{\mathbb{R}}
\newcommand{\CC}{\mathbb{C}}
\newcommand{\QQ}{\mathbb{Q}}
\newcommand{\NN}{\mathbb{N}}
\newcommand{\ZZ}{\mathbb{Z}}
\newcommand{\EE}{\mathbb{E}}
\newcommand{\Bo}{\mathcal{B}}
\newcommand{\Oo}{\mathcal{O}}
\newcommand{\RE}{ {\rm Re \,} }
\newtheorem{Th}{Theorem}
\newtheorem{Lem}{Lemma}
\newtheorem{Prop}{Proposition}
\theoremstyle{definition}
\newtheorem{Df}{Definition}
\theoremstyle{remark}
\newtheorem{Rem}{Remark}
\newtheorem{Ex}{Example}
\begin{document}

\keywords{linear PDEs with constant coefficients,
formal power series, moment functions, moment-PDEs, Gevrey order, Borel summability, multisummability.}

\subjclass[2010]{35C10, 35C15, 35E15, 40G10.}
\title[Inhomogeneous moment PDEs]{Analytic and summable solutions of inhomogeneous moment partial
differential equations}

\author{S{\l}awomir Michalik}

\address{Faculty of Mathematics and Natural Sciences,
College of Science\\
Cardinal Stefan Wyszy\'nski University\\
W\'oycickiego 1/3,
01-938 Warszawa, Poland}
\email{s.michalik@uksw.edu.pl}
\urladdr{\url{http://www.impan.pl/~slawek}}

\begin{abstract}
We study the Cauchy problem for a general inhomogeneous linear
moment partial differential equation of two complex
variables with constant coefficients, where the inhomogeneity is given by the formal power series.
We state sufficient conditions for the convergence, analytic continuation and summability
of formal power series solutions in terms of properties of the inhomogeneity.
We consider both the summability in one variable $t$ (with coefficients belonging
to some Banach space of Gevrey series with respect to the second variable $z$)
and the summability in two variables $(t,z)$.
\end{abstract}

\maketitle

\section{Introduction}
The formal $m$-moment differentiation $\partial_{m,z}$ is a linear operator on the space of formal power series defined by
$$
\partial_{m,z}\Big(\sum_{j=0}^{\infty}\frac{u_j z^{j}}{m(j)}\Big):=
\sum_{j=0}^{\infty}\frac{u_{j+1}z^{j}}{m(j)},
$$
where $m(u)$ is a moment function constructed in the Balser theory of moment summability \cite[Section 5.5]{B2}. 
In the special case $m(u)=\Gamma(1+u)$, the operator $\partial_{m,z}$ coincides with the usual differentiation $\partial_z$.
More generally, for $p\in\NN$ and $m(u)=\Gamma(1+u/p)$ the operator $\partial_{m,z}$ is closely related to the $1/p$-fractional
differentiation $\partial^{1/p}_z$.

The concept of moment differentiation was introduced by Balser and Yoshino \cite{B-Y}, who studied the Gevrey order
of formal solutions of inhomogeneous moment partial differential equations with constant coefficients. 

The theory of moment partial differential
equations was developed by the author \cite{Mic7,Mic8}. In \cite{Mic7} formal, analytic and summable solutions of homogeneous
moment partial differential equations were studied.
In the same paper the author also calculated normalised formal solutions in the inhomogeneous case and he determined their Gevrey orders.
Next, in \cite{Mic8} the author extended the results of \cite{Mic7} to homogeneous equations with divergent Cauchy data.
Moreover, Lastra, Malek and Sanz \cite{La-Ma-Sa} generalised the results of \cite{Mic7} about homogeneous moment equations to the case
when moment functions are determined by 
given strongly regular moment sequences.

The present paper is a natural generalisation of \cite{Mic7,Mic8} to the inhomogeneous case with the inhomogeneity given by the formal power series.
More precisely, we consider the initial value problem for a general inhomogeneous linear moment partial differential equation
of two complex variables $(t,z)$ with constant coefficients
\begin{gather}
\label{eq:general_p}
P(\partial_{m_1,t},\partial_{m_2,z})\widehat{u}=\widehat{f}(t,z),\quad \partial^j_{m_1,t} \widehat{u}(0,z)=0\ \textrm{for}\ j=0,\dots,n-1,
\end{gather}
where $P(\lambda,\zeta)$ is a polynomial of two variables of degree $n$ with respect to $\lambda$ and
the inhomogeneity $\widehat{f}(t,z)$ is a formal power series in both variables of Gevrey order $(\tilde{s}_1,\tilde{s}_2)\in\RR^2$,
and $m_1$, $m_2$ are moment functions of orders $s_1$, $s_2$ respectively.
\par
Similarly to \cite{Mic8}, we consider the wider class of  moment functions than in \cite{B-Y} or \cite{Mic7}, which creates the group with respect to multiplication.
This extension allows us to study some integro-differential operators as moment differential operators.

As in the previous papers \cite{Mic7,Mic8} we use the moment pseudodifferential operators $\lambda_i(\partial_{m_2,z})$ to factorise
the operator $P$ as
$$
P(\partial_{m_1,t},\partial_{m_2,z})=P_0(\partial_{m_2,z})(\partial_{m_1,t}-\lambda_1(\partial_{m_2,z}))^{n_1}\dots(\partial_{m_1,t}-\lambda_l(\partial_{m_2,z}))^{n_l},
$$
where $P_0(\zeta)$ is a polynomial and $\lambda_1(\zeta),\dots,\lambda_l(\zeta)$ are
the roots of the characteristic equation $P(\lambda,\zeta)=0$ of (\ref{eq:general_p}) with multiplicities $n_1,\dots,n_l$ ($n_1+\cdots+n_l=n$) respectively.

In general, a formal solution of (\ref{eq:general_p}) is not unique,
but it is uniquely determined by every formal power series $\widehat{g}$ satisfying $P_0(\partial_{m_2,z})\widehat{g}=\widehat{f}$,
since there is exactly one formal solution of (\ref{eq:general_p}) satisfying also
the moment pseudodifferential equation
$$
(\partial_{m_1,t}-\lambda_1(\partial_{m_2,z}))^{n_1}\cdots(\partial_{m_1,t}-\lambda_l(\partial_{m_2,z}))^{n_l}\widehat{u}
=\widehat{g}.
$$
If $\widehat{u}$ is a formal solution of (\ref{eq:general_p}) determined by $\widehat{g}$ then
$\widehat{u}=\sum_{\alpha=1}^l\sum_{\beta=1}^{n_{\alpha}}\widehat{u}_{\alpha\beta}$, where $\widehat{u}_{\alpha\beta}$ satisfies the simple
moment pseudodifferential equation
\begin{gather}
\label{eq:simply}
(\partial_{m_1,t}-\lambda_\alpha(\partial_{m_2,z}))^{\beta} \widehat{u}_{\alpha\beta}=
\widehat{g}_{\alpha\beta},\quad \partial^j_{m_1,t} \widehat{u}_{\alpha\beta}(0,z)=0\
\textrm{for}\ j=0,\dots,\beta-1,
\end{gather}
for some formal series $\widehat{g}_{\alpha\beta}$ constructed from $\widehat{g}$.

In this way we reduce the problem (\ref{eq:general_p}) to (\ref{eq:simply}). 
We immediately calculate the Gevrey order of $\widehat{u}_{\alpha\beta}$, which depends on a pole order of $\lambda_\alpha(\zeta)$,
the Gevrey order of $\widehat{g}$ and
orders of moment functions $m_1$ and $m_2$ (Theorem \ref{th:gevrey}).   

If the Gevrey order of $\widehat{u}_{\alpha\beta}$ is equal to zero, we study the analytic continuation properties of $u_{\alpha\beta}$.
To this end we observe that by the general
theory of moment summability, without loss of generality
we may assume that $m_1(u)=\Gamma(1+s_1u)$ and $m_2(u)=\Gamma(1+s_2u)$. In this case we construct the integral representation of $u_{\alpha\beta}$,
which allows
us to find the connection between analytic continuation properties of solution $u_{\alpha\beta}$ and inhomogeneity $g_{\alpha\beta}$
(Theorem \ref{th:2}).

If the Gevrey order of $\widehat{u}_{\alpha\beta}$ is greater than zero, then applying appropriate moment Borel transforms $\Bo_{m_1',t}$ and
$\Bo_{m_2',z}$ to (\ref{eq:simply}) we transform the formal solution $\widehat{u}_{\alpha\beta}$ of (\ref{eq:simply}) with divergent inhomogeneity
$\widehat{g}_{\alpha\beta}$ into the analytic solution 
$v_{\alpha\beta}=\Bo_{m_1',t}\Bo_{m_2',z}\widehat{u}_{\alpha\beta}$ of the equation
$$
(\partial_{m_1m_1',t}-\lambda_\alpha(\partial_{m_2m_2',z}))^{\beta} v_{\alpha\beta}=\Bo_{m_1',t}\Bo_{m_2',z}\widehat{g}_{\alpha\beta}
$$
with the convergent inhomogeneity $\Bo_{m_1',t}\Bo_{m_2',z}\widehat{g}_{\alpha\beta}$.
In this way we obtain the sufficient conditions
for the summability of $\widehat{u}_{\alpha\beta}$ (both in one variable $t$ and in two variables $(t,z)$) in terms of the analytic continuation
properties of the inhomogeneity (Theorem \ref{th:simple_sum}).
\par
Finally, returning to the general equation (\ref{eq:general_p}), we get the sufficient condition for the multisummability of $\widehat{u}$
in terms of the inhomogeneity $\widehat{g}$
(Theorem \ref{th:multi1}).
\par
In the last section we define the Newton polygon for moment partial differential operators with constant coefficients and we use
it to describe the multisummable solutions of
(\ref{eq:general_p}) in the case when inhomogeneity $f$ is analytic in some complex neighbourhood of the origin.
\par
Since in the special case $m_1(u)=m_2(u)=\Gamma(1+u)$, (\ref{eq:general_p}) is the Cauchy problem for an inhomogeneous linear partial differential
equation with constant coefficients and with a divergent inhomogeneity, the paper provides new sufficient conditions for the summability
of formal solutions of such initial value problem. In this way the results of this paper extends our knowledge about summability of formal solutions
of inhomogeneous linear partial differential equations, which was studied earlier by such authors as
Balser \cite{B4}, Balser and Loday-Richaud \cite{B-L}, Balser, Duval and Malek \cite{B-D-M}, Michalik \cite{Mic2,Mic6} and 
Tahara and Yamazawa \cite{T-Y}.

\section{Notation}
Throughout this paper we use the following notation.
The complex disc in $\CC^n$ with centre at the origin
and radius $r>0$ is denoted by $D^n_r:=\{z\in\CC^n:\ |z|< r\}$.
To simplify notation, we write $D_r$ instead of $D^1_r$. If the radius $r$ is not essential,
then we denote it briefly by $D^n$ (resp. $D$).
\par
\emph{A sector in a direction $d\in\RR$ with an opening $\varepsilon>0$}
in the universal covering space $\widetilde{\CC\setminus\{0\}}$ of $\CC\setminus\{0\}$ is defined by
\[
S_d(\varepsilon):=\{z\in\widetilde{\CC\setminus\{0\}}:\ z=re^{i\theta},\
d-\varepsilon/2<\theta<d+\varepsilon/2,\ r>0\}.
\]
Moreover, if the value of opening angle $\varepsilon$ is not essential, then we denote it briefly by $S_d$.
\par
Analogously, by \emph{a disc-sector in a direction $d\in\RR$ with an opening $\varepsilon>0$ and radius $r>0$}
we mean a domain $\hat{S}_d(\varepsilon;r):=S_d(\varepsilon)\cup D_r$. If the values of $\varepsilon$ and $r$
are not essential, we write it $\hat{S}_d$ for brevity (i.e. $\hat{S}_d=S_d\cup D$).
\par
By $\mathcal{O}(G)$ we understand the space of holomorphic functions on a domain $G\subseteq\CC^n$.
Analogously,
the space of analytic functions of the variables $z_1^{1/\kappa_{1}},\dots,z_n^{1/\kappa_n}$
($(\kappa_1,\dots,\kappa_n)\in\NN^n$) on $G$ is denoted by $\mathcal{O}_{1/\kappa_{1},\dots,1/\kappa_n}(G)$.
More generally, if $\EE$ denotes a Banach space with a norm $\|\cdot\|_{\EE}$, then
by $\Oo(G,\EE)$ (resp. $\Oo_{1/\kappa_{1},\dots,1/\kappa_n}(G,\EE)$) we shall denote the set of all $\EE$-valued  
holomorphic functions (resp. holomorphic functions of the variables $z_1^{1/\kappa_{1}},\dots,z_n^{1/\kappa_n}$) 
on a domain $G\subseteq\CC^n$.
For more information about functions with values in Banach spaces we refer the reader to \cite[Appendix B]{B2}. 
In the paper, as a Banach space $\EE$ we will
take the space of complex numbers $\CC$ (we abbreviate $\Oo(G,\CC)$ to $\Oo(G)$ and
$\Oo_{1/\kappa_{1},\dots,1/\kappa_n}(G,\CC)$ to $\Oo_{1/\kappa_{1},\dots,1/\kappa_n}(G)$)
or the space of Gevrey series $G_{s,1/\kappa}(r)$ (see Definition \ref{df:G_s}).
\par
\begin{Df}
A function $u\in\Oo_{1/\kappa}(\hat{S}_d(\varepsilon;r),\EE)$
is of \emph{exponential growth of order at most $K\in\RR$ as $x\to\infty$
in $\hat{S}_d(\varepsilon;r)$} if for any
$\widetilde{\varepsilon}\in(0,\varepsilon)$ and $\widetilde{r}\in(0,r)$ there exist 
$A,B<\infty$ such that
\begin{gather*}
\|u(x)\|_{\EE}<Ae^{B|x|^K} \quad \textrm{for every} \quad x\in \hat{S}_d(\widetilde{\varepsilon};\widetilde{r}).
\end{gather*}
The space of such functions is denoted by $\Oo_{1/\kappa}^K(\hat{S}_d(\varepsilon;r),\EE)$.
\par
Analogously, a function $u\in\Oo_{1/\kappa_1,1/\kappa_2}(\hat{S}_{d_1}(\varepsilon_1;r_1)\times\hat{S}_{d_2}(\varepsilon_2;r_2))$
is of \emph{exponential growth of order at most $(K_1,K_2)\in\RR^2$ as $(t,z)\to\infty$
in $\hat{S}_{d_1}(\varepsilon_1;r_1)\times\hat{S}_{d_2}(\varepsilon_2;r_2)$} if
for any $\widetilde{\varepsilon}_i\in(0,\varepsilon_i)$ and any $\widetilde{r}_i\in(0,r_i)$ ($i=1,2$)
there exist $A,B_1,B_2<\infty$ such that
\begin{gather*}
|u(t,z)|<Ae^{B_1|t|^{K_1}}e^{B_2|z|^{K_2}} \quad \textrm{for every} \quad (t,z)\in 
\hat{S}_{d_1}(\widetilde{\varepsilon}_1;\widetilde{r}_1)\times
\hat{S}_{d_2}(\widetilde{\varepsilon}_2;\widetilde{r}_2).
\end{gather*}
The space of such functions is denoted by
$\Oo^{K_1,K_2}_{1/\kappa_1,1/\kappa_2}(\hat{S}_{d_1}(\varepsilon_1;r_1)\times\hat{S}_{d_2}(\varepsilon_2;r_2) )$.
\end{Df}
\par
The space of formal power series
$ \widehat{u}(x)=\sum_{j=0}^{\infty}u_j x^{j/\kappa}$ with $u_j\in\mathbb{E}$ is denoted by $\mathbb{E}[[x^{\frac{1}{\kappa}}]]$.
Analogously, the space of formal power series $\widehat{u}(t,z)=\sum_{j,n=0}^{\infty}u_{jn} t^{j/\kappa_1}
z^{n/\kappa_2}$ with
$u_{jn}\in\mathbb{E}$ is denoted by $\mathbb{E}[[t^{\frac{1}{\kappa_1}},z^{\frac{1}{\kappa_2}}]]$.

We use the ``hat'' notation ($\widehat{u}$, $\widehat{v}$, $\widehat{\varphi}$, $\widehat{f}$, $\widehat{g}$) to denote the formal power series.
If the formal power series $\widehat{u}$ (resp. $\widehat{v}$, $\widehat{\varphi}$, $\widehat{f}$, $\widehat{g}$) is convergent,
we denote its sum by $u$ (resp. $v$, $\varphi$, $f$, $g$).

\section{Moment functions}
   In this section we recall the notion of moment methods introduced by Balser \cite{B2}.
   
   \begin{Df}[see {\cite[Section 5.5]{B2}}]
    \label{df:moment}
    A pair of functions $e_m$ and $E_m$ is said to be \emph{kernel functions of order $k$} ($k>1/2$) if
    they have the following properties:
   \begin{enumerate}
    \item[1.] $e_m\in\Oo(S_0(\pi/k))$, $e_m(z)/z$ is integrable at the origin, $e_m(x)\in\RR_+$ for $x\in\RR_+$ and
     $e_m$ is exponentially flat of order $k$ as $z\to\infty$ in $S_0(\pi/k)$ (i.e. $\forall_{\varepsilon > 0} \exists_{A,B > 0}$
     such that $|e_m(z)|\leq A e^{-(|z|/B)^k}$ for $z\in S_0(\pi/k-\varepsilon)$).
    \item[2.] $E_m\in\Oo^{k}(\CC)$ and $E_m(1/z)/z$ is integrable at the origin in $S_{\pi}(2\pi-\pi/k)$.
    \item[3.] The connection between $e_m$ and $E_m$ is given by the \emph{corresponding moment function
    $m$ of order $1/k$} as follows.
     The function $m$ is defined by the Mellin transform of $e_m$
     \begin{gather}
      \label{eq:e_m}
      m(u):=\int_0^{\infty}x^{u-1} e_m(x)dx \quad \textrm{for} \quad \RE u \geq 0
     \end{gather}
     and the kernel function $E_m$ has the power series expansion
     \begin{gather}
      \label{eq:E_m}
      E_m(z)=\sum_{n=0}^{\infty}\frac{z^n}{m(n)} \quad  \textrm{for} \quad z\in\CC.
     \end{gather}
   \end{enumerate}
   \end{Df}
   
   \begin{Rem}
    Observe that by the inverse Mellin transform and by (\ref{eq:E_m}), the moment function $m$
    uniquely determines the kernel functions $e_m$ and $E_m$.
   \end{Rem}

    In case $k\leq 1/2$ the set $S_{\pi}(2\pi-\pi/k)$ is not defined,
    so the second property in Definition \ref{df:moment} can not be satisfied. It means that we
    must define the kernel functions of order $k\leq 1/2$ and the corresponding moment functions
    in another way.
    
    \begin{Df}[see {\cite[Section 5.6]{B2}}]
     \label{df:small}
     A function $e_m$ is called \emph{a kernel function of order $k>0$} if we
     can find a pair of kernel functions $e_{\widetilde{m}}$ and $E_{\widetilde{m}}$ of
     order $pk>1/2$ (for some $p\in\NN$) so that
     \begin{gather*}
      e_m(z)=e_{\widetilde{m}}(z^{1/p})/p \quad \textrm{for} \quad z\in S_0(\pi/k).
     \end{gather*}
     For a given kernel function $e_m$ of order $k>0$ we define the
     \emph{corresponding moment function $m$ of order $1/k>0$} by (\ref{eq:e_m}) and
     the \emph{kernel function $E_m$ of order $k>0$} by (\ref{eq:E_m}).
    \end{Df}
    
    \begin{Rem}
     Observe that by Definitions \ref{df:moment} and \ref{df:small} we have
     \begin{eqnarray*}
      m(u)=\widetilde{m}(pu) & \textrm{and} &
      E_m(z)=\sum_{j=0}^{\infty}\frac{z^j}{m(j)}=\sum_{j=0}^{\infty}\frac{z^j}{\widetilde{m}(jp)}.
     \end{eqnarray*}
    \end{Rem}

As in \cite{Mic8}, we extend the notion of moment functions to real orders.
\begin{Df}
 \label{df:moment_general}
     We say that $m$ is a \emph{moment function of order $1/k<0$} if $1/m$ is a moment function of order $-1/k>0$.
     \par
     We say that $m$ is a \emph{moment function of order $0$} if there exist moment functions $m_1$ and $m_2$ of the same order $1/k>0$ such that $m=m_1/m_2$.
\end{Df}

By Definition \ref{df:moment_general} and by \cite[Theorems 31 and 32]{B2} we have
\begin{Prop}
 Let $m_1$, $m_2$ be moment functions of orders $s_1,s_2\in\RR$ respectively. Then
 \begin{itemize}
  \item $m_1m_2$ is a moment function of order $s_1+s_2$,
  \item $m_1/m_2$ is a moment function of order $s_1-s_2$.
 \end{itemize}
\end{Prop}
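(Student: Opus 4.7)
My plan is to reduce everything to the positive-order case handled by Balser's Theorems 31 and 32, via a short case analysis on the signs of $s_1$ and $s_2$ (and in one subcase on the sign of $s_1+s_2$). Before starting the case work, I would prove the auxiliary statement that if $m$ is a moment function of order $s \in \RR$, then $1/m$ is a moment function of order $-s$. This is immediate from Definition \ref{df:moment_general} when $s>0$ or $s<0$; when $s=0$, writing $m = \mu_1/\mu_2$ with $\mu_1,\mu_2$ moment functions of the same positive order, we get $1/m = \mu_2/\mu_1$, which is again of order $0$ by the same definition. Once this is available, the quotient statement follows immediately from the product statement via $m_1/m_2 = m_1 \cdot (1/m_2)$, so I would focus the rest of the argument on the product.

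For $m_1 m_2$ of order $s_1+s_2$, the main cases are the following. If $s_1,s_2>0$, apply Balser's Theorem 31 directly. If $s_1,s_2<0$, write $1/(m_1 m_2)=(1/m_1)(1/m_2)$; both factors have positive orders $-s_1,-s_2$, so Balser gives order $-s_1-s_2>0$ for the product, and then Definition \ref{df:moment_general} gives $m_1 m_2$ order $s_1+s_2$. For the mixed-sign case, say $s_1>0>s_2$, write $m_1 m_2 = m_1/(1/m_2)$ where $1/m_2$ has positive order $-s_2$: if $s_1>-s_2$ apply Theorem 32, if $s_1=-s_2$ conclude order $0$ directly from Definition \ref{df:moment_general}, and if $s_1<-s_2$ pass to $1/(m_1 m_2) = (1/m_2)/m_1$ and apply Theorem 32 in the other direction.

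The remaining cases are those where $s_1$ or $s_2$ equals $0$. Here I unfold the Definition \ref{df:moment_general}: writing the zero-order factor as $\mu_1/\mu_2$ with $\mu_1,\mu_2$ of a common positive order $\sigma$ (chosen large enough if needed, using that one may multiply numerator and denominator by the same positive-order moment function), the product $m_1 m_2$ becomes an expression of the form $(m_1\mu_1)/\mu_2$, which by the already-settled positive/negative-order cases applied to numerator and denominator is a moment function of the correct order $s_1+s_2$. The case where both orders are $0$ is analogous, ending with a quotient of two moment functions of a common positive order, hence of order $0$ by definition.

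I expect the only delicate point to be the case with $s_1=0$ (or $s_2=0$) combined with $s_1+s_2<0$ in the positive-order auxiliary~$\mu_i$, where to invoke Theorem 32 one needs the numerator to have strictly larger order than the denominator. The remedy is that the positive order $\sigma$ used in the ratio representation of an order-$0$ moment function is not unique and can be chosen large enough — which itself can be justified by multiplying a given representation $\mu_1/\mu_2$ by $\mu/\mu$ for any moment function $\mu$ of large positive order, using the already-proved positive-order product rule. Everything else is bookkeeping.
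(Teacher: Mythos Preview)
Your proposal is correct and follows essentially the same approach as the paper: the paper's entire argument is the single line ``By Definition~\ref{df:moment_general} and by \cite[Theorems 31 and 32]{B2}'', and your case analysis is precisely the unpacking of that citation. One minor slip: when you write ``the product $m_1 m_2$ becomes an expression of the form $(m_1\mu_1)/\mu_2$'', the indices are swapped (you mean the non-zero-order factor times $\mu_1$, divided by $\mu_2$), but the intended argument is clear and sound.
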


\begin{Ex}
\label{ex:functions}
 For any $a\geq 0$, $b\geq 1$ and $k>0$ we can construct the following examples of kernel functions $e_m$ and 
 $E_m$ of 
orders $k>0$ with the corresponding moment function $m$ of order $1/k$ satisfying Definition \ref{df:moment}
or \ref{df:small}:
 \begin{itemize}
  \item $e_m(z)=akz^{bk}e^{-z^k}$,
  \item $m(u)=a\Gamma(b+u/k)$,
  \item $E_m(z)=\frac{1}{a}\sum_{j=0}^{\infty}\frac{z^j}{\Gamma(b+j/k)}$.
\end{itemize}
In particular for $a=b=1$ we get the kernel functions and the corresponding moment function, which are used  
in the classical theory of $k$-summability.
\begin{itemize}
    \item $e_m(z)=kz^ke^{-z^k}$,
    \item $m(u)=\Gamma(1+u/k)$,
    \item $E_m(z)=\sum_{j=0}^{\infty}\frac{z^j}{\Gamma(1+j/k)}=:\mathbf{E}_{1/k}(z)$, where $\mathbf{E}_{1/k}$ is the
    Mittag-Leffler function of index $1/k$.
\end{itemize}
\end{Ex}

\begin{Ex}
For any $s\in\RR$ we will denote by $\Gamma_s$ the function
\[
  \Gamma_s(u):=\left\{
  \begin{array}{lll}
    \Gamma(1+su) & \textrm{for} & s \geq 0\\
    1/\Gamma(1-su) & \textrm{for} & s < 0.
  \end{array}
  \right.
\]
Observe that by Example \ref{ex:functions} and Definition \ref{df:moment_general}, $\Gamma_s$
is an example of a moment function of order $s\in\RR$.
\end{Ex}

The moment functions $\Gamma_s$ will be extensively used in the paper,
since every moment function $m$ of order $s$ has the same growth as $\Gamma_s$. Precisely speaking,
we have 
\begin{Prop}[see {\cite[Section 5.5]{B2}}]
  \label{pr:order}
  If $m$ is a moment function of order $s\in\RR$
  then there exist constants $c,C>0$ such that
    \begin{gather*}
     c^n\Gamma_s(n)\leq m(n) \leq C^n\Gamma_s(n) \quad \textrm{for every} \quad n\in\NN.
    \end{gather*}
  \end{Prop}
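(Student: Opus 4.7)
The plan is to handle the three cases of Definitions \ref{df:moment}, \ref{df:small} and \ref{df:moment_general} in sequence, reducing each to the fundamental case $s=1/k$ with $k>1/2$. For $s<0$, applying the estimate to $1/m$ (of order $-s>0$) and inverting gives the claim after using $\Gamma_s(n)=1/\Gamma_{-s}(n)$. For $s=0$, writing $m=m_1/m_2$ with $m_1,m_2$ of a common positive order produces the result by taking the quotient of the two bounds, since $\Gamma_0(n)=1$. For $0<k\le 1/2$, Definition \ref{df:small} gives $m(u)=\widetilde m(pu)$ for some $\widetilde m$ of order $1/(pk)>1/2$, so bounds on $\widetilde m(pn)$ translate into bounds on $m(n)$ with the constants raised to the $p$-th power.

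The core argument is therefore the case $k>1/2$ of Definition \ref{df:moment}. The upper bound is straightforward from the Mellin representation (\ref{eq:e_m}): positivity of $e_m$ on the positive real axis together with exponential flatness $|e_m(x)|\le A\exp(-(x/B)^k)$ yield, after the substitution $u=(x/B)^k$,
\[
m(n)=\int_0^\infty x^{n-1}e_m(x)\,dx\le \frac{AB^n}{k}\Gamma(n/k)\le AB^n\Gamma(1+n/k)
\]
for $n\ge 1$; the value $m(0)$ is finite by integrability of $e_m(z)/z$ at the origin and can be absorbed by enlarging $C$.

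The lower bound is the delicate step. Here I would exploit the dual kernel $E_m\in\Oo^k(\CC)$, whose power series (\ref{eq:E_m}) has coefficients $1/m(n)$ and which satisfies $|E_m(z)|\le A'\exp((|z|/B')^k)$. Cauchy's inequality on the circle $|z|=R$ yields $1/m(n)\le A'\exp((R/B')^k)R^{-n}$; minimizing in $R$ (the optimum is at $R=B'(n/k)^{1/k}$) gives
\[
\frac{1}{m(n)}\le \frac{A'}{(B')^n}\Bigl(\frac{ek}{n}\Bigr)^{n/k}.
\]
Stirling's asymptotic $\Gamma(1+n/k)\sim\sqrt{2\pi n/k}\,(n/(ek))^{n/k}$ then produces $m(n)\ge (B')^n\Gamma(1+n/k)/(A''\sqrt{n})$ for large $n$, and absorbing the polynomial factor $\sqrt n$ into a slightly smaller exponential base $c<B'$ gives the desired inequality (the finitely many small $n$ are handled by adjusting $c$). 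The main obstacle is precisely this matching of the Cauchy-optimized estimate with Stirling's asymptotic: the exponential constants $B$ and $B'$, the optimization in $R$, and the $\sqrt n$ correction all have to be tracked to confirm that only exponential factors $c^n,C^n$ survive after comparison with $\Gamma_s(n)$.
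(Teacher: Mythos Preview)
The paper does not actually supply a proof of this proposition: it is stated with the citation ``see \cite[Section~5.5]{B2}'' and no argument is given, so there is nothing in the paper to compare against. Your argument is correct and is essentially the standard one found in Balser's book: the upper bound comes from the Mellin integral (\ref{eq:e_m}) together with the exponential flatness of $e_m$, the lower bound from Cauchy's estimates applied to the entire function $E_m\in\Oo^k(\CC)$ followed by Stirling, and the reductions to the basic case $k>1/2$ via Definitions~\ref{df:small} and~\ref{df:moment_general} are exactly as you describe.
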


\section{Moment Borel transforms, Gevrey order and Borel summability}
We use the moment function to define moment Borel transforms, the Gevrey order and the Borel summability. We first introduce
\begin{Df}
 \label{df:moment_Borel}
 Let $\kappa\in\NN$ and $m$ be a moment function. Then the linear operator $\Bo_{m,x^{1/\kappa}}\colon \EE[[x^{\frac{1}{\kappa}}]]\to\EE[[x^{\frac{1}{\kappa}}]]$ defined by
 \[
  \Bo_{m,x^{1/\kappa}}\big(\sum_{j=0}^{\infty}u_jx^{j/\kappa}\big):=
  \sum_{j=0}^{\infty}\frac{u_j}{m(j/\kappa)}x^{j/\kappa}
 \]
 is called an \emph{$m$-moment Borel transform with respect to $x^{1/\kappa}$}.
\end{Df}

We define the Gevrey order of formal power series as follows
   \begin{Df}
    \label{df:summab}
    Let $\kappa\in\NN$ and $s\in\RR$. Then
    $\widehat{u}\in\EE[[x^{\frac{1}{\kappa}}]]$ is called a \emph{formal power series of Gevrey order $s$} if
    there exists a disc $D\subset\CC$ with centre at the origin such that
    $\Bo_{\Gamma_s,x^{1/\kappa}}\widehat{u}\in\Oo_{1/\kappa}(D,\EE)$. The space of formal power series of Gevrey 
    order $s$ is denoted by $\EE[[x^{\frac{1}{\kappa}}]]_s$.
    \par
    Analogously, if $\kappa_1,\kappa_2\in\NN$ and $s_1,s_2\in\RR$ then
    $\widehat{u}\in\EE[[t^{\frac{1}{\kappa_1}},z^{\frac{1}{\kappa_2}}]]$ is called a \emph{formal power series of Gevrey order $(s_1,s_2)$} if there exists a disc $D^2\subset\CC^2$ with centre at the origin such that
    $\Bo_{\Gamma_{s_1},t^{1/\kappa_1}}\Bo_{\Gamma_{s_2},z^{1/\kappa_2}}\widehat{u}\in
    \Oo_{1/\kappa_1,1/\kappa_2}(D^2,\EE)$. The space of formal power series of Gevrey 
    order $(s_1,s_2)$ is denoted by $\EE[[t^{\frac{1}{\kappa_1}},z^{\frac{1}{\kappa_2}}]]_{s_1,s_2}$.
   \end{Df}

\begin{Rem}
 \label{re:moment}
 By Proposition \ref{pr:order}, we may replace $\Gamma_s$ (resp. $\Gamma_{s_1}$ and $\Gamma_{s_2}$) in Definition
 \ref{df:summab} by any moment function $m$ of order $s$ (resp. by any moment functions $m_1$ and $m_2$ of orders
 $s_1$ and $s_2$).
\end{Rem}

\begin{Rem}
 \label{re:entire}
 If $\widehat{u}\in\EE[[x^{\frac{1}{\kappa}}]]_s$ and $s\leq 0$ then the formal series $\widehat{u}$ is convergent,
 so its sum $u$ is well defined.
 Moreover, $\widehat{u}\in\EE[[x^{\frac{1}{\kappa}}]]_0 \Longleftrightarrow u\in\Oo_{1/\kappa}(D,\EE)$ and
 $\widehat{u}\in\EE[[x^{\frac{1}{\kappa}}]]_s \Longleftrightarrow u\in\Oo^{-1/s}_{1/\kappa}(\CC,\EE)$ for $s<0$.
\end{Rem}

By Definitions \ref{df:moment_Borel} and \ref{df:summab} we obtain
\begin{Prop}
 \label{pr:properties}
 For every $\widehat{u}\in\EE[[x^{\frac{1}{\kappa}}]]$ the following properties of moment Borel transforms are satisfied:
 \begin{itemize}
  \item $\Bo_{m_1,x^{1/\kappa}}\Bo_{m_2,x^{1/\kappa}}\widehat{u} = \Bo_{m_1m_2,x^{1/\kappa}}\widehat{u}$ for every moment functions $m_1$ and
  $m_2$.
  \item $\Bo_{m,x^{1/\kappa}}\Bo_{1/m,x^{1/\kappa}}\widehat{u}=
  \Bo_{1/m,x^{1/\kappa}}\Bo_{m,x^{1/\kappa}}\widehat{u}=\Bo_{1,x^{1/\kappa}}\widehat{u}=\widehat{u}$ for every moment function $m$.
  \item $\widehat{u}\in\EE[[x^{\frac{1}{\kappa}}]]_{s_1} \Longleftrightarrow \Bo_{m,x^{1/\kappa}}\widehat{u}\in\EE[[x^{\frac{1}{\kappa}}]]_{s_1-s}$  for every $s,s_1\in\RR$ and for every moment function $m$ of order $s$.
 \end{itemize}
\end{Prop}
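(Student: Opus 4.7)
The plan is to verify each of the three bullets by unpacking the definition of $\Bo_{m,x^{1/\kappa}}$ given in Definition \ref{df:moment_Borel} and invoking Remark \ref{re:moment}. Given an arbitrary $\widehat u = \sum_{j\ge 0} u_j x^{j/\kappa} \in \EE[[x^{1/\kappa}]]$, I would first compute $\Bo_{m_2,x^{1/\kappa}}\widehat u$ coefficient by coefficient and then apply $\Bo_{m_1,x^{1/\kappa}}$. The $j$-th coefficient of the composition is $u_j/(m_1(j/\kappa)\,m_2(j/\kappa))$, which is precisely the $j$-th coefficient of $\Bo_{m_1m_2,x^{1/\kappa}}\widehat u$. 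The target symbol is meaningful because $m_1m_2$ is a moment function by the Proposition preceding Example \ref{ex:functions}. This yields the first bullet.

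For the second bullet, I would combine the first bullet with the observation that $m\cdot(1/m)\equiv 1$, where the constant function $1$ qualifies as a moment function of order $0$ (take, for example, $m_1=m_2=m$ in Definition \ref{df:moment_general}, so that $m/m = 1$ has order $0$). Since $1(j/\kappa)=1$ for every $j$, Definition \ref{df:moment_Borel} shows that $\Bo_{1,x^{1/\kappa}}$ acts as the identity on $\EE[[x^{1/\kappa}]]$. Both composite transforms $\Bo_{m,x^{1/\kappa}}\Bo_{1/m,x^{1/\kappa}}$ and $\Bo_{1/m,x^{1/\kappa}}\Bo_{m,x^{1/\kappa}}$ then collapse to $\Bo_{1,x^{1/\kappa}}$ by the first bullet, and the chain of equalities follows.

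For the third bullet, I would use the first bullet to rewrite
\[
\Bo_{\Gamma_{s_1-s},x^{1/\kappa}}\Bo_{m,x^{1/\kappa}}\widehat u \;=\; \Bo_{\Gamma_{s_1-s}\,m,\,x^{1/\kappa}}\widehat u,
\]
and note that $\Gamma_{s_1-s}\,m$ is a moment function of order $(s_1-s)+s=s_1$. By Remark \ref{re:moment}, the right-hand side lies in $\Oo_{1/\kappa}(D,\EE)$ for some disc $D$ at the origin if and only if $\widehat u\in\EE[[x^{1/\kappa}]]_{s_1}$. On the other hand, Definition \ref{df:summab} applied to $\Bo_{m,x^{1/\kappa}}\widehat u$ identifies the same holomorphy condition with $\Bo_{m,x^{1/\kappa}}\widehat u\in\EE[[x^{1/\kappa}]]_{s_1-s}$. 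Chaining these two equivalences gives the third bullet.

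The argument is essentially a bookkeeping exercise, and I do not expect any genuine obstacle. The only step worth a moment's thought is justifying that the constant function $1$ fits Definition \ref{df:moment_general} so that $\Bo_{1,x^{1/\kappa}}$ has a legitimate meaning as the identity operator; aside from that, every step reduces to a direct coefficient-wise computation or a direct appeal to Remark \ref{re:moment} and the multiplicativity of moment function orders.
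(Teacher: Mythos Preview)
Your proposal is correct and takes essentially the same approach as the paper: the paper simply states that the proposition follows ``by Definitions \ref{df:moment_Borel} and \ref{df:summab}'' without spelling out any details, and your argument is exactly the natural unpacking of those definitions (together with Remark \ref{re:moment}) via coefficient-wise computation.
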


As a Banach space $\EE$ we will take the space of complex numbers $\CC$ or the space of Gevrey series 
$G_{s,1/\kappa}(r)$ defined below.

\begin{Df}
\label{df:G_s}
Fix $\kappa\in\NN$, $r>0$ and $s\in\RR$. By $G_{s,1/\kappa}(r)$ we denote a Banach space of Gevrey series
\[
 G_{s,1/\kappa}(r):=\{\widehat{\varphi}\in\CC[[z^{\frac{1}{\kappa}}]]_s\colon \Bo_{\Gamma_s,z^{1/\kappa}}\widehat{\varphi}\in\Oo_{1/\kappa}(D_r)\cap C(\overline{D_r})\}
\]
equipped with the norm
\[
 \|\widehat{\varphi}\|_{G_{s,1/\kappa}(r)}:=\max_{|z|\leq r}|\Bo_{\Gamma_s,z^{1/\kappa}}\widehat{\varphi}(z)|.
\]
We also set $G_{s,1/\kappa}:=\varinjlim\limits_{r>0}G_{s,1/\kappa}(r)$. Analogously, we define
$\Oo_{1/\widetilde{\kappa}}(G,G_{s,1/\kappa}):=\varinjlim\limits_{r>0}\Oo_{1/\widetilde{\kappa}}(G,G_{s,1/\kappa}(r))$ and $\Oo^K_{1/\widetilde{\kappa}}(G,G_{s,1/\kappa}):=\varinjlim\limits_{r>0}\Oo^K_{1/\widetilde{\kappa}}(G,G_{s,1/\kappa}(r))$.
\par
Moreover, we denote by $G_{s_2,1/\kappa}[[t]]_{s_1}$ the space of
formal power series $\widehat{u}(t,z)=\sum_{j=0}^{\infty}\widehat{u}_j(z)t^j$ of Gevrey order $s_1$ with
coefficients $\widehat{u}_j(z)\in G_{s_2,1/\kappa}$.
\end{Df}

By Definitions \ref{df:summab}, \ref{df:G_s}, Remark \ref{re:moment} and Proposition 
\ref{pr:properties} we conclude
\begin{Prop}
\label{pr:prop2}
 For every $\kappa\in\NN$, $s, \overline{s}\in\RR$ (resp. $s_1,s_2,\overline{s}\in\RR$) and for every moment function $m$ of order $\overline{s}$ the following conditions are equivalent:
 \begin{itemize}
  \item $\widehat{u}\in\CC[[x^{\frac{1}{\kappa}}]]_s$
  (resp. $\widehat{u}\in\CC[[t,z^{\frac{1}{\kappa}}]]_{s_1,s_2}$),
  \item $\Bo_{\Gamma_s,x^{1/\kappa}}\widehat{u}\in\Oo_{1/\kappa}(D)$ (resp. $\Bo_{\Gamma_{s_1},t}
  \Bo_{\Gamma_{s_2},z^{1/\kappa}}\widehat{u}\in\Oo_{1,1/\kappa}(D^2)$),
  \item there exists $r>0$ such that $\widehat{u}\in G_{s,1/\kappa}(r)$ (resp.
  $\widehat{u}\in G_{s_2, 1/\kappa}(r)[[t]]_{s_1}$),
  \item $\widehat{u}\in G_{s,1/\kappa}$ (resp.
  $\widehat{u}\in G_{s_2,1/\kappa}[[t]]_{s_1}$),
  \item $\Bo_{m,x^{1/\kappa}}\widehat{u}\in\CC[[x^{\frac{1}{\kappa}}]]_{s-\overline{s}}$
   (resp. $\Bo_{m,z^{1/\kappa}}\widehat{u}\in G_{s_2-\overline{s},1/\kappa}[[t]]_{s_1}$). 
 \end{itemize}
\end{Prop}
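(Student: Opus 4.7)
The plan is to prove the one-variable equivalences by a short cycle of essentially definitional identifications, then transfer them to the two-variable case by interpreting $\widehat{u}(t,z)=\sum_{j\geq 0}\widehat{u}_j(z)t^j$ as a Banach-space-valued formal series in $t$ with coefficients in $G_{s_2,1/\kappa}(r)$. The tools I will lean on are Proposition~\ref{pr:order}, Remark~\ref{re:moment}, Proposition~\ref{pr:properties}, and Definition~\ref{df:G_s}.

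For the one-variable part, the equivalence (1)$\Leftrightarrow$(2) is just Definition~\ref{df:summab}, with Remark~\ref{re:moment} justifying the specific choice $m=\Gamma_s$ (this freedom is ultimately an instance of Proposition~\ref{pr:order}). For (2)$\Leftrightarrow$(3), the forward direction is obtained by shrinking the disc $D$ slightly to gain continuity up to the boundary, which by Definition~\ref{df:G_s} gives $\widehat{u}\in G_{s,1/\kappa}(r)$; the reverse direction is immediate from the inclusion $\Oo_{1/\kappa}(D_r)\cap C(\overline{D_r})\subset\Oo_{1/\kappa}(D_r)$. Then (3)$\Leftrightarrow$(4) is the definition of $G_{s,1/\kappa}$ as $\varinjlim_{r>0}G_{s,1/\kappa}(r)$. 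Finally, (1)$\Leftrightarrow$(5) is the third bullet of Proposition~\ref{pr:properties}, applied with a moment function $m$ of order $\overline{s}$.

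For the two-variable assertions, I would run the one-variable chain with $\EE=G_{s_2,1/\kappa}(r)$, after reading $\widehat{u}$ as a formal series in $t$. The main technical obstacle is reconciling the joint condition in (2), namely $\Bo_{\Gamma_{s_1},t}\Bo_{\Gamma_{s_2},z^{1/\kappa}}\widehat{u}\in\Oo_{1,1/\kappa}(D^2)$, with the $\EE$-valued condition in (3), namely $\widehat{u}\in G_{s_2,1/\kappa}(r)[[t]]_{s_1}$. I would handle this via Cauchy estimates applied separately in each variable: joint holomorphy on a bidisc translates into uniform $G_{s_2,1/\kappa}(r)$-norms on the $t$-coefficients of $\Bo_{\Gamma_{s_1},t}\widehat{u}$ after passing to a slightly smaller radius in $z$, and conversely such uniform bounds reassemble into joint holomorphy on a bidisc by standard arguments. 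Once this translation is in place, the remaining equivalences in the two-variable case are formal consequences of the one-variable result applied to the $\EE$-valued series, together with Proposition~\ref{pr:properties} (third bullet) to obtain (5).
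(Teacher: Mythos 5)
Your argument is correct and follows the same route the paper indicates — the paper gives no written proof, merely citing Definitions~\ref{df:summab} and \ref{df:G_s}, Remark~\ref{re:moment} and Proposition~\ref{pr:properties}, which is exactly what you unpack. The only step the paper leaves truly implicit is the passage between joint holomorphy on a bidisc and the $G_{s_2,1/\kappa}(r)$-valued formulation, and your Cauchy-estimate argument (shrinking $r$ in $z$ to trade uniform coefficient bounds for joint holomorphy and back) is the right way to fill that in.
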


Now we are ready to define the summability of formal power series in one variable (see Balser \cite{B2})
\begin{Df}
\label{df:summable}
Let $\kappa\in\NN$, $K>0$ and $d\in\RR$. Then $\widehat{u}\in\EE[[x^{\frac{1}{\kappa}}]]$ is called
\emph{$K$-summable in a direction $d$} if there exists a disc-sector $\hat{S}_d$ in a direction $d$ such that
$\Bo_{\Gamma_{1/K},x^{1/\kappa}}\widehat{u}\in\Oo^K_{1/\kappa}(\hat{S}_d,\EE)$.
\end{Df}

\begin{Rem}
\label{re:summable}
 By Definitions \ref{df:G_s} and \ref{df:summable}, $\widehat{u}\in G_{s,1/\kappa}[[t]]$ is $K$-summable in a direction $d$
 if and only if $\Bo_{\Gamma_{1/K},t}\Bo_{\Gamma_{s},z^{1/\kappa}}\widehat{u}\in\Oo^K_{1,1/\kappa}(\hat{S}_d\times D)$.
\end{Rem}

We can now define the multisummability in a multidirection.
\begin{Df}
 Let $k_1>\cdots>k_n>0$. We say that a real vector $(d_1,\dots,d_n)\in\RR^n$ is an
 \emph{admissible multidirection} if
 \begin{gather*}
  |d_j-d_{j-1}| \leq \pi(1/k_j - 1/k_{j-1})/2 \quad \textrm{for} \quad j=2,\dots,n.
 \end{gather*}
 \par
 Let $\mathbf{k}=(k_1,\dots,k_n)\in\RR^n_+$ and  let $\mathbf{d}=(d_1,\dots,d_n)\in\RR^n$ be an
 admissible multidirection.
 We say that a formal power series
 $\widehat{u}\in\EE[[x]]$ is {\em $\mathbf{k}$-multisummable in the
 multidirection $\mathbf{d}$}
 if $\widehat{u}=\widehat{u}_1+\cdots+\widehat{u}_n$, where $\widehat{u}_j\in\EE[[x]]$ is
 $K_j$-summable
 in the direction $d_j$ for $j=1,\dots,n$.
\end{Df}

Following Sanz \cite{S} we extend the notion of summability to two variables
\begin{Df}
\label{df:summable2}
For $\kappa_1,\kappa_2\in\NN$, $K_1,K_2>0$ and $d_1,d_2\in\RR$ the formal power series
$\widehat{u}\in\CC[[t^{\frac{1}{\kappa_1}},z^{\frac{1}{\kappa_2}}]]$ is called
\emph{$(K_1,K_2)$-summable in the direction $(d_1,d_2)$} if there exist disc-sectors $\hat{S}_{d_1}$
and $\hat{S}_{d_2}$ such that
$\Bo_{\Gamma_{1/K_1},t^{1/\kappa_1}}\Bo_{\Gamma_{1/K_2},z^{1/\kappa_2}}\widehat{u}\in
\Oo_{1/\kappa_1,1/\kappa_2}^{K_1,K_2}(\hat{S}_{d_1}\times\hat{S}_{d_2})$.
\end{Df}

\begin{Rem}
By the general theory of moment summability (see \cite[Section 6.5 and Theorem 38]{B2}), we may replace
 $\Gamma_{1/K}$  in Definition \ref{df:summable} (resp. $\Gamma_{1/K_1}$ and $\Gamma_{1/K_2}$ 
 in Definition \ref{df:summable2}) by any 
 moment function $m$ of order $1/K$ (resp. by any moment functions $m_1$ of order $1/K_1$ and $m_2$ of order $1/K_2$).
\end{Rem}
 
 Using moment functions of order $0$ we can formulate the above remark as
 \begin{Prop}
  \label{pr:sum}
  We assume that $\kappa_1,\kappa_2\in\NN$, $K_1,K_2>0$, $d_1,d_2\in\RR$, $\overline{m}_1,\overline{m}_2$ are moment functions of order $0$,
$\widehat{u}\in\CC[[t^{\frac{1}{\kappa_1}},z^{\frac{1}{\kappa_2}}]]$ and $v\in\Oo_{1/\kappa_1,1/\kappa_2}(D^2)$. Then
 \begin{itemize}
  \item $\widehat{u}$ is $K_1$-summable in the direction $d_1$ (with respect to $t^{\frac{1}{\kappa_1}}$) if and only if 
        $\Bo_{\overline{m}_1,t^{1/\kappa_1}}\Bo_{\overline{m}_2,z^{1/\kappa_2}}\widehat{u}$ is
        $K_1$-summable in the direction $d_1$ (with respect to $t^{\frac{1}{\kappa_1}}$).
  \item $\widehat{u}$ is $(K_1,K_2)$-summable in the direction $(d_1,d_2)$ if and only if 
        $\Bo_{\overline{m}_1,t^{1/\kappa_1}}\Bo_{\overline{m}_2,z^{1/\kappa_2}}\widehat{u}$ is
        $(K_1,K_2)$-summable in the direction $(d_1,d_2)$.
  \item $v\in\Oo_{1/\kappa_1,1/\kappa_2}^{K_1}(\hat{S}_{d_1}\times D)$ if and only if
        $\Bo_{\overline{m}_1,t^{1/\kappa_1}}\Bo_{\overline{m}_2,z^{1/\kappa_2}}v\in
        \Oo_{1/\kappa_1,1/\kappa_2}^{K_1}(\hat{S}_{d_1}\times D)$.
  \item $v\in\Oo_{1/\kappa_1,1/\kappa_2}^{K_1,K_2}(\hat{S}_{d_1}\times \hat{S}_{d_2})$ if and only if
        $\Bo_{\overline{m}_1,t^{1/\kappa_1}}\Bo_{\overline{m}_2,z^{1/\kappa_2}}v\in
        \Oo_{1/\kappa_1,1/\kappa_2}^{K_1,K_2}(\hat{S}_{d_1}\times \hat{S}_{d_2})$.   
 \end{itemize}

 \end{Prop}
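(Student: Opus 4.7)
The plan is to reduce every assertion to the single quantitative fact that a moment function $\overline{m}$ of order $0$ rescales the coefficients of any power series by at most geometric factors. Indeed, applying Proposition \ref{pr:order} with $s=0$ (so that $\Gamma_0\equiv 1$) yields constants $c,C>0$ with $c^n\leq\overline{m}(n)\leq C^n$, and the standard extension from integer arguments to arguments of the form $n/\kappa$ produces bounds $c_1^n\leq\overline{m}(n/\kappa)\leq C_1^n$. Consequently $\Bo_{\overline{m},\cdot}$ preserves convergence on any disc (up to shrinking its radius by a constant factor), preserves Gevrey order, and preserves exponential growth in any sector.

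For the first two bullets I would combine this observation with the Remark immediately preceding the proposition, which permits replacing $\Gamma_{1/K_i}$ in Definitions \ref{df:summable} and \ref{df:summable2} by any moment function of order $1/K_i$. By the product rule for orders, $\overline{m}_i\Gamma_{1/K_i}$ is such a moment function, and by Proposition \ref{pr:properties} one has the factorization $\Bo_{\overline{m}_i\Gamma_{1/K_i},\cdot}=\Bo_{\Gamma_{1/K_i},\cdot}\Bo_{\overline{m}_i,\cdot}$. Hence the $K_i$-summability of $\widehat{u}$ in direction $d_i$ is equivalent to that of $\Bo_{\overline{m}_i,\cdot}\widehat{u}$, which settles the second bullet (applied in both variables) and the $t$-variable for the first. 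The remaining factor $\Bo_{\overline{m}_2,z^{1/\kappa_2}}$ in the first bullet is absorbed by writing $\widehat{u}\in G_{s_2,1/\kappa_2}[[t^{1/\kappa_1}]]$ via Remark \ref{re:summable} and Proposition \ref{pr:prop2} for a suitable $s_2$, and noting that $\Bo_{\overline{m}_2,z^{1/\kappa_2}}$ preserves this Gevrey Banach-space structure by Proposition \ref{pr:properties}.

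For the third and fourth bullets, which concern genuine analytic functions rather than formal series, I would proceed by direct coefficient estimates. Expanding $v(t,z)=\sum v_{j,n}t^{j/\kappa_1}z^{n/\kappa_2}$ and combining Cauchy bounds on the disc parts of the sectors with the hypothesized exponential growth along rays, the geometric rescaling by $1/(\overline{m}_1(j/\kappa_1)\overline{m}_2(n/\kappa_2))$ produces a function in the same class $\Oo^{K_1}_{1/\kappa_1,1/\kappa_2}(\hat{S}_{d_1}\times D)$ (resp.\ $\Oo^{K_1,K_2}_{1/\kappa_1,1/\kappa_2}(\hat{S}_{d_1}\times\hat{S}_{d_2})$) on slightly shrunken sector-disc products. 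The converse uses that $1/\overline{m}_i$ is again a moment function of order $0$ by Definition \ref{df:moment_general}, together with the fact that $\Bo_{1/\overline{m}_i,\cdot}$ inverts $\Bo_{\overline{m}_i,\cdot}$ by Proposition \ref{pr:properties}.

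The main technical obstacle is the careful transfer of the exponential growth through the formal Borel transform $\Bo_{\overline{m}_1,t^{1/\kappa_1}}$ in the sector direction: the Taylor coefficients of $v$ at $t=0$ only encode the disc behaviour, so the growth at infinity in $\hat{S}_{d_1}$ must be tracked via an intermediate Laplace-type representation, obtained for instance by writing $\overline{m}_1=m_1/m_2$ with $m_1,m_2$ of the same positive order and invoking their kernel functions (Definitions \ref{df:moment} and \ref{df:small}). Once this step is settled, the remainder of the argument is a routine application of Proposition \ref{pr:properties} and the moment-summability invariance already recalled from \cite[Theorem 38]{B2}.
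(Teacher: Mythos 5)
The paper gives no separate proof of this proposition: it is stated as a direct reformulation of the Remark that precedes it, which in turn appeals to Balser's equivalence of moment summability methods (\cite[Section 6.5 and Theorem 38]{B2}). Your treatment of the first two bullets follows exactly this route and is correct: $\overline{m}_i\Gamma_{1/K_i}$ is a moment function of order $1/K_i$, Proposition \ref{pr:properties} gives the factorisation $\Bo_{\overline{m}_i\Gamma_{1/K_i},\cdot}=\Bo_{\Gamma_{1/K_i},\cdot}\Bo_{\overline{m}_i,\cdot}$, and the Remark then converts summability of $\widehat{u}$ into summability of $\Bo_{\overline{m}_i,\cdot}\widehat{u}$ and back.

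For the third and fourth bullets, however, the coefficient-estimate strategy you sketch does not close, for the very reason you flag: $\Bo_{\overline{m}_1,t^{1/\kappa_1}}$ acts only on the Taylor coefficients of $v$ at $t=0$, and geometric rescaling of those coefficients cannot control the behaviour of the resummed series along the rays of $\hat{S}_{d_1}$. Cauchy bounds on the disc portion of the sector yield only geometric growth of coefficients, which says nothing about whether the transformed series continues analytically into the full sector with exponential growth of order $K_1$. Your suggested repair, via kernel-function representations of $\overline{m}_1=m_1/m_2$, is the right circle of ideas, but carrying it out would essentially reprove the relevant part of \cite[Theorem 38]{B2} rather than use it. The missing shortcut is to reduce bullets $3$ and $4$ back to bullets $1$ and $2$: by Definition \ref{df:summable}, $v\in\Oo^{K_1}_{1/\kappa_1,1/\kappa_2}(\hat{S}_{d_1}\times D)$ is precisely the statement that $\Bo_{\Gamma_{-1/K_1},t^{1/\kappa_1}}\widehat{v}$ (with $\widehat{v}$ the Taylor expansion of $v$ at the origin, viewed with coefficients in a Banach space of holomorphic functions of $z$) is $K_1$-summable in direction $d_1$; bullet $1$ then transfers this to $\Bo_{\overline{m}_1,t^{1/\kappa_1}}\Bo_{\overline{m}_2,z^{1/\kappa_2}}\widehat{v}$, and translating back through Definition \ref{df:summable} gives bullet $3$, with bullet $4$ obtained identically from Definition \ref{df:summable2}. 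This makes the whole proposition a definition-chase through the Remark, with no separate coefficient estimates or Laplace argument needed.
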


\section{Moment operators}
     In this section we recall the notion of moment differential operators constructed by Balser and Yoshino
     \cite{B-Y} and the concept of moment pseudodifferential operators introduced in our previous
     papers \cite{Mic7,Mic8}.
     
   \begin{Df}
    Let $m$ be a moment function. Then the linear operator
    $\partial_{m_,x}\colon\EE[[x]]\to\EE[[x]]$
    defined by
    \[
     \partial_{m,x}\Big(\sum_{j=0}^{\infty}\frac{u_{j}}{m(j)}x^{j}\Big):=
     \sum_{j=0}^{\infty}\frac{u_{j+1}}{m(j)}x^{j}
    \]
    is called the \emph{$m$-moment differential operator $\partial_{m,x}$}.
    \par
    More generally, if $\kappa\in\NN$ then the linear operator 
   $\partial_{m_,x^{1/\kappa}}\colon\EE[[x^{\frac{1}{\kappa}}]]\to\EE[[x^{\frac{1}{\kappa}}]]$
    defined by
    \[
     \partial_{m,x^{1/\kappa}}\Big(\sum_{j=0}^{\infty}\frac{u_{j}}{m(j/\kappa)}x^{j/\kappa}\Big):=
     \sum_{j=0}^{\infty}\frac{u_{j+1}}{m(j/\kappa)}x^{j/\kappa}
    \]
    is called the \emph{$m$-moment $1/\kappa$-fractional differential operator $\partial_{m,x^{1/\kappa}}$}.
    \par
    Moreover, the right-inversion operator 
    $\partial^{-1}_{m,x^{1/\kappa}}\colon\EE[[x^{\frac{1}{\kappa}}]]\to\EE[[x^{\frac{1}{\kappa}}]]$
    given by
    $$
     \partial^{-1}_{m,x^{1/\kappa}}\Big(\sum_{j=0}^{\infty}\frac{u_{j}}{m(j/\kappa)}x^{j/\kappa}\Big):=
     \sum_{j=1}^{\infty}\frac{u_{j-1}}{m(j/\kappa)}x^{j/\kappa}$$
    is called the \emph{$m$-moment $1/\kappa$-fractional integration operator $\partial^{-1}_{m,x^{1/\kappa}}$}.
    \end{Df}
    
     Below we present most important examples of moment differential operators. Other examples, including also integro-differential operators, can be found in
     \cite[Example 3]{Mic8}.
     \begin{Ex} 
    \label{ex:operator}
     If $m(u)=\Gamma_1(u)$ then the operator $\partial_{m,x}$ coincides with the usual differentiation $\partial_x$.
     More generally, if $s>0$ and $m(u)=\Gamma_s(u)$ then the operator $\partial_{m,x}$ satisfies
     \[
      (\partial_{m,x}\widehat{u})(x^s)=\partial^s_x(\widehat{u}(x^s)),
     \]
     where $\partial^s_x$ denotes the Caputo fractional derivative of order $s$
     defined by
       $$
     \partial^{s}_{x}\Big(\sum_{j=0}^{\infty}\frac{u_{j}}{\Gamma_s(j)}x^{sj}\Big):=
     \sum_{j=0}^{\infty}\frac{u_{j+1}}{\Gamma_s(j)}x^{sj}.$$
     \end{Ex}
  
 The moment differential operator $\partial_{m,z}$ is well-defined for every $\varphi\in\Oo(D)$. In addition,
 we have the following integral representation of $\partial^n_{m,z}\varphi$.
\begin{Prop}[see {\cite[Proposition 3]{Mic7}}]
 Let $\varphi\in \Oo(D_r)$ and $m$ be a moment function of order $1/k>0$. Then for every $|z|<\varepsilon<r$ and 
 $n\in\NN$ we have
 \[
  \partial_{m,z}^n\varphi(z) = \frac{1}{2\pi i} \oint_{|w|=\varepsilon} \varphi(w)
  \int_0^{\infty(\theta)}\zeta^{n}E_m(z\zeta)\frac{e_m(w\zeta)}{w\zeta}\,d\zeta\,dw,
 \]
 where $\theta\in (-\arg w-\frac{\pi}{2k}, -\arg w + \frac{\pi}{2k})$.
\end{Prop}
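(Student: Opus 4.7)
\medskip

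\noindent\textbf{Proof proposal.} The plan is to verify the formula by expanding everything into power series and identifying coefficients. Since $\varphi\in\Oo(D_r)$, I write $\varphi(w)=\sum_{j=0}^{\infty}\varphi_j w^j$, and by Definition of $\partial_{m,z}$ (rewriting $\varphi_j=\varphi_j m(j)/m(j)$) I obtain the closed form
\[
\partial_{m,z}^n\varphi(z)=\sum_{l=0}^{\infty}\frac{m(n+l)}{m(l)}\varphi_{n+l}\,z^l,
\]
which converges for $|z|<r$. The goal is to show the right-hand side of the stated formula equals this series.

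First I would plug in the power series expansion $E_m(z\zeta)=\sum_{l=0}^{\infty}(z\zeta)^l/m(l)$ and interchange it with both integrals. After simplifying the factor $1/(w\zeta)$, the inner integral becomes $\int_0^{\infty(\theta)}\zeta^{n+l-1}e_m(w\zeta)\,d\zeta$. The crucial computation is the substitution $x=w\zeta$: the constraint on $\theta$ guarantees that the new path $\arg x=\theta+\arg w$ lies inside $S_0(\pi/k)$, where $e_m$ is holomorphic and exponentially flat of order $k$ by property 1 of Definition \ref{df:moment}. Cauchy's theorem then lets me deform the path back to $\RR_+$, and the defining Mellin transform \eqref{eq:e_m} of $m$ gives
\[
\int_0^{\infty(\theta)}\zeta^{n+l-1}e_m(w\zeta)\,d\zeta=\frac{1}{w^{n+l}}\int_0^{\infty}x^{n+l-1}e_m(x)\,dx=\frac{m(n+l)}{w^{n+l}}.
\]
Substituting this into the double integral and applying Cauchy's integral formula $\varphi_{n+l}=\frac{1}{2\pi i}\oint_{|w|=\varepsilon}\varphi(w)w^{-n-l-1}\,dw$ reproduces exactly the series representation of $\partial_{m,z}^n\varphi(z)$ written above.

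The main technical obstacles are the convergence and interchange justifications. The exponential flatness $|e_m(w\zeta)|\leq A\exp(-(|w\zeta|/B)^k)$ on any proper subsector gives uniform summability of $\sum_l |z\zeta|^l/m(l)$ against $\zeta^{n-1}e_m(w\zeta)$ along the ray once $|z|<\varepsilon$, using Proposition \ref{pr:order} to compare $m(l)$ with $\Gamma_{1/k}(l)$; this justifies both Fubini and termwise integration. The condition $|z|<\varepsilon$ is precisely what ensures that the series $\sum_l (m(n+l)/m(l))\varphi_{n+l}z^l$ from the extracted coefficients converges (since $|\varphi_{n+l}|\lesssim \varepsilon^{-n-l}$ from the Cauchy estimates). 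Once these two points are established, the identification of the two series is purely formal.
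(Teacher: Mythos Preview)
The paper does not supply its own proof of this proposition; it is quoted from \cite[Proposition 3]{Mic7}. Your argument---expand $E_m$ as a power series, evaluate the inner $\zeta$-integral via the substitution $x=w\zeta$ and the Mellin-transform definition \eqref{eq:e_m} of $m$, then recover the Taylor coefficients of $\varphi$ by Cauchy's formula---is exactly the standard computation and is correct in outline.

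One small point worth tightening: your claim that the interchange of $\sum_l$ with the $\zeta$-integral is justified ``once $|z|<\varepsilon$'' is slightly optimistic. The bound $|E_m(z\zeta)|\le A'e^{B'|z\zeta|^k}$ together with $|e_m(w\zeta)|\le Ae^{-(|w\zeta|/B)^k}$ only yields an integrable majorant when $B'|z|^k<\varepsilon^k/B^k$, i.e.\ for $|z|<c\,\varepsilon$ with some constant $c\le 1$ depending on the kernel. This is harmless for the proposition as stated, because $\varepsilon<r$ is at your disposal: establish the identity first for $|z|<c\,\varepsilon$, and then either enlarge $\varepsilon$ within $(|z|/c,r)$ or invoke analytic continuation in $z$ (both sides being holomorphic on $D_\varepsilon$). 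With that adjustment the argument is complete.
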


Using the above formula, we have defined in \cite[Definition 8]{Mic7} a moment
pseudodifferential operator $\lambda(\partial_{m,z})\colon \Oo(D)\to\Oo(D)$ as an operator satisfying
\begin{eqnarray*}
  \lambda(\partial_{m,z}) E_m(\zeta z):=\lambda(\zeta)E_m(\zeta z)&\textrm{for}& |\zeta|\geq r_0.
\end{eqnarray*}
Namely, if $\lambda(\zeta)$ is 
an analytic function for $|\zeta|\geq r_0$ then $\lambda(\partial_{m,z})$ is defined by
\[
\lambda(\partial_{m,z})\varphi(z):=\frac{1}{2\pi i} \oint_{|w|=\varepsilon} \varphi(w)
  \int_{r_0e^{i\theta}}^{\infty(\theta)}\lambda(\zeta)E_m(\zeta z)\frac{e_m(\zeta w)}{\zeta w}\,d\zeta\,dw
\]
for every $\varphi\in\Oo(D_r)$ and $|z|<\varepsilon < r$, where
$\theta\in (-\arg w-\frac{\pi}{2k}, -\arg w + \frac{\pi}{2k})$.
\par
In \cite{Mic8} we have extended this definition to the case where $\lambda(\zeta)$ is an analytic function of the variable
$\xi=\zeta^{1/\kappa}$ for $|\zeta|\geq r_0$ (for some $\kappa\in\NN$ and $r_0>0$).
Since 
$(\partial_{m,z}\varphi)(z^{\kappa})=\partial_{\widetilde{m},z}^{\kappa}(\varphi(z^{\kappa}))$ for every $\varphi\in\Oo(D)$, where $\widetilde{m}(u):=m(u/\kappa)$ (see \cite[Lemma 3]{Mic7}),
the operator $\lambda(\partial_{m,z})$ should satisfy the formula
\begin{eqnarray}
 \label{eq:kappa}
 (\lambda(\partial_{m,z})\varphi)(z^{\kappa})=\lambda(\partial^{\kappa}_{\widetilde{m},z})(\varphi(z^{\kappa})) &
 \textrm{for every} & \varphi\in\Oo_{1/\kappa}(D).
\end{eqnarray}
For this reason we have
\begin{Df}[{\cite[Definition 13]{Mic8}}]
\label{df:pseudo_1}
Let $m$ be a moment function of order $1/k>0$ and
 $\lambda(\zeta)$ be an analytic function of the variable
$\xi=\zeta^{1/\kappa}$ for $|\zeta|\geq r_0$
(for some $\kappa\in\NN$ and $r_0>0$) of polynomial growth at infinity.
A \emph{moment pseudodifferential operator}
 $\lambda(\partial_{m,z})\colon\Oo_{1/\kappa}(D)\to\Oo_{1/\kappa}(D)$ (or, more generally, $\lambda(\partial_{m,z})\colon\EE[[z^{\frac{1}{\kappa}}]]_0\to\EE[[z^{\frac{1}{\kappa}}]]_0$) is defined by
 \begin{equation}
  \label{eq:lambda}
  \lambda(\partial_{m,z})\varphi(z):=\frac{1}{2\kappa\pi i} \oint^{\kappa}_{|w|=\varepsilon}\varphi(w)
  \int_{r_0e^{i\theta}}^{\infty(\theta)}\lambda(\zeta)
  E_{\widetilde{m}}(\zeta^{1/\kappa} z^{1/\kappa})\frac{e_m(\zeta w)}{\zeta w}\,d\zeta\,dw
 \end{equation}
for every $\varphi\in\Oo_{1/\kappa}(D_r)$ and $|z|<\varepsilon < r$, where $\widetilde{m}(u):=m(u/\kappa)$, $E_{\widetilde{m}}(\zeta^{1/\kappa} z^{1/\kappa})=\sum_{n=0}^{\infty}\frac{\zeta^{n/\kappa}z^{n/\kappa}}{\widetilde{m}(n)}$, $\theta\in (-\arg w-\frac{\pi}{2k}, -\arg w + \frac{\pi}{2k})$ and $\oint_{|w|=\varepsilon}^{\kappa}$ means that we integrate $\kappa$ times along the positively oriented circle of radius $\varepsilon$. Here the integration in the inner integral is taken over a ray $\{re^{i\theta}\colon r\geq r_0\}$.
\end{Df}

Observe that
\begin{multline*}
(\lambda(\partial_{m,z})\varphi)(z^{\kappa})=\frac{1}{2\kappa\pi i}\oint_{|w|=\varepsilon}^{\kappa} \varphi(w)
  \int\limits_{r_0e^{i\theta}}^{\infty(\theta)}\lambda(\zeta)E_{\widetilde{m}}(\zeta^{1/\kappa}z)
  \frac{e_m(\zeta w)}{\zeta w}\,d\zeta\,dw\\
  =\frac{1}{2\pi i}\oint_{|w^{\kappa}|=\varepsilon}\varphi(w^{\kappa})
  \int\limits_{r_0^{1/\kappa}e^{i\theta/\kappa}}^{\infty(\theta/\kappa)}\lambda(\zeta^k)
  E_{\widetilde{m}}(\zeta z)\frac{e_{\widetilde{m}}(\zeta w)}{\zeta w}\,d\zeta\,dw=
 \lambda(\partial^{\kappa}_{\widetilde{m},z})(\varphi(z^{\kappa})),
\end{multline*}
so (\ref{eq:kappa}) holds for the operators $\lambda(\partial_{m,z})$ defined by (\ref{eq:lambda}).
\par
Immediately by the definition, we obtain the following connection between the moment Borel transform and the moment
differentiation.
\begin{Prop}
\label{pr:commutation}
Let $m$ and $m'$ be moment functions of positive orders. Then
the operators $\Bo_{m',x}, \partial_{m,x}\colon\EE[[x]]\to\EE[[x]]$ satisfy the following commutation formulas
for every $\widehat{u}\in\EE[[x]]$ and for $\overline{m}=mm'$:
\begin{enumerate}
\item[i)] $\Bo_{m',x}\partial_{m,x}\widehat{u}=\partial_{\overline{m},x}\Bo_{m',x}\widehat{u}$,
\item[ii)] $\Bo_{m',x}P(\partial_{m,x})\widehat{u}=P(\partial_{\overline{m},x})\Bo_{m',x}\widehat{u}$
for any polynomial $P$ with constant coefficients.
\end{enumerate}
\end{Prop}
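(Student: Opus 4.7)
The plan is to verify (i) by a direct coefficient-by-coefficient calculation from the definitions of $\Bo_{m',x}$ and $\partial_{m,x}$, and then deduce (ii) by induction on the degree of $P$ together with linearity.

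For (i), I would take $\widehat{u}=\sum_{j=0}^{\infty}u_j x^j\in\EE[[x]]$ and rewrite its coefficients in the ``moment-normalised'' form $\widehat{u}=\sum_{j=0}^{\infty}\frac{m(j)u_j}{m(j)}x^j$ so that the defining formula for $\partial_{m,x}$ applies directly, giving
\[
\partial_{m,x}\widehat{u}=\sum_{j=0}^{\infty}\frac{m(j+1)u_{j+1}}{m(j)}x^j,
\qquad
\Bo_{m',x}\partial_{m,x}\widehat{u}=\sum_{j=0}^{\infty}\frac{m(j+1)u_{j+1}}{m(j)m'(j)}x^j.
\]
On the other side, $\Bo_{m',x}\widehat{u}=\sum_{j=0}^{\infty}\frac{u_j}{m'(j)}x^j$; rewriting this in $\overline{m}$-normalised form as $\sum_{j=0}^{\infty}\frac{m(j)u_j}{\overline{m}(j)}x^j$ and then applying $\partial_{\overline{m},x}$ via its defining formula yields
\[
\partial_{\overline{m},x}\Bo_{m',x}\widehat{u}=\sum_{j=0}^{\infty}\frac{m(j+1)u_{j+1}}{\overline{m}(j)}x^j
=\sum_{j=0}^{\infty}\frac{m(j+1)u_{j+1}}{m(j)m'(j)}x^j,
\]
matching the previous expression term by term. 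This proves (i).

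For (ii), I would first bootstrap (i) to arbitrary powers. The base case $n=1$ is (i). For the inductive step, setting $\widehat{v}=\partial_{m,x}\widehat{u}$, one computes
\[
\Bo_{m',x}\partial^{n+1}_{m,x}\widehat{u}
=\Bo_{m',x}\partial_{m,x}(\partial_{m,x}^{n}\widehat{u})
=\partial_{\overline{m},x}\Bo_{m',x}\partial_{m,x}^{n}\widehat{u}
=\partial_{\overline{m},x}^{n+1}\Bo_{m',x}\widehat{u},
\]
where the second equality uses (i) applied to $\partial_{m,x}^{n}\widehat{u}$ and the third uses the inductive hypothesis. Since both $\Bo_{m',x}$ and every $\partial_{m,x}^n$, $\partial_{\overline{m},x}^n$ are $\CC$-linear on $\EE[[x]]$, writing $P(\lambda)=\sum_{k=0}^{N}c_k\lambda^{k}$ and summing the identities $\Bo_{m',x}\partial_{m,x}^k\widehat{u}=\partial_{\overline{m},x}^k\Bo_{m',x}\widehat{u}$ weighted by $c_k$ yields (ii).

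There is no real obstacle here: the argument is essentially a bookkeeping exercise, with the only subtlety being to keep the two distinct ``normalised'' forms $\sum a_j x^j / m(j)$ versus $\sum b_j x^j / \overline{m}(j)$ of the same series straight when applying the defining formulas of $\partial_{m,x}$ and $\partial_{\overline{m},x}$. The assumption that $m$ and $m'$ have positive orders is needed only to ensure the operators $\Bo_{m',x}$ and $\partial_{m,x}$ are well defined via the moment-function formalism of the previous sections, not in the algebraic identity itself.
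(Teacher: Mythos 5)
Your proof is correct and is precisely the direct coefficient computation that the paper has in mind when it states the proposition with the remark that it follows ``immediately by the definition.'' The only thing worth noting is that your closing remark is slightly off: the formal definitions of $\Bo_{m',x}$ and $\partial_{m,x}$ as operators on $\EE[[x]]$ make sense for any moment function (they are purely algebraic), so the positivity hypothesis is simply a standing assumption in this section rather than something needed for well-definedness of the operators here.
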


The same commutation formula holds if we replace $P(\partial_{m,x})$ by $\lambda(\partial_{m,x})$. Namely, we have
\begin{Prop}[see {\cite[Proposition 8]{Mic8}}]
\label{pr:commutation2}
Let $m$ and $m'$ be moment functions of positive orders and $\lambda(\zeta)$ be an analytic function of the variable
$\xi=\zeta^{1/\kappa}$ for $|\zeta|\geq r_0$
(for some $\kappa\in\NN$ and $r_0>0$) of polynomial growth at infinity. Then
the operators $\Bo_{m',x^{1/\kappa}}, \lambda(\partial_{m,x})\colon\EE[[x^{\frac{1}{\kappa}}]]_0\to\EE[[x^{\frac{1}{\kappa}}]]_0$ 
satisfy the commutation formula
$$\Bo_{m',x^{1/\kappa}}\lambda(\partial_{m,x})\widehat{u}=\lambda(\partial_{\overline{m},x})
\Bo_{m',x^{1/\kappa}}\widehat{u}$$
for every $\widehat{u}\in\EE[[x^{\frac{1}{\kappa}}]]_0$ and for $\overline{m}=mm'$.
\end{Prop}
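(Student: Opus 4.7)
The plan is to substitute the integral representation (\ref{eq:lambda}) defining $\lambda(\partial_{m,x})\widehat u$ and push $\Bo_{m',x^{1/\kappa}}$ under both integrals. Since the Borel transform acts only on the $x$-variable, and that variable appears in (\ref{eq:lambda}) only through the kernel $E_{\widetilde m}(\zeta^{1/\kappa}x^{1/\kappa})$ with $\widetilde m(u):=m(u/\kappa)$, a termwise application of Definition \ref{df:moment_Borel} gives
\[
\Bo_{m',x^{1/\kappa}}E_{\widetilde m}(\zeta^{1/\kappa}x^{1/\kappa})=\sum_{n=0}^\infty\frac{\zeta^{n/\kappa}x^{n/\kappa}}{\widetilde m(n)\widetilde{m'}(n)}=E_{\widetilde{\overline m}}(\zeta^{1/\kappa}x^{1/\kappa}),
\]
where $\widetilde{m'}(u):=m'(u/\kappa)$ and $\widetilde{\overline m}=\widetilde m\,\widetilde{m'}$. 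After this substitution the left-hand side of the claimed identity becomes an integral with kernel $E_{\widetilde{\overline m}}(\zeta^{1/\kappa}x^{1/\kappa})$, but still carrying $\widehat u(w)$ and $e_m(\zeta w)$ in the remaining factors.

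The right-hand side, obtained by applying (\ref{eq:lambda}) with $m$ replaced by $\overline m$ to $\Bo_{m',x^{1/\kappa}}\widehat u$, contains the same $E_{\widetilde{\overline m}}(\zeta^{1/\kappa}x^{1/\kappa})$ factor but with $(\Bo_{m',x^{1/\kappa}}\widehat u)(w)$ in place of $\widehat u(w)$ and $e_{\overline m}(\zeta w)$ in place of $e_m(\zeta w)$. Both sides depend linearly and continuously on $\widehat u\in\Oo_{1/\kappa}(D_r,\EE)$, so I would reduce to the case $\widehat u(w)=w^{j/\kappa}$. Evaluating the inner $w$-integral on such a monomial via the moment identity $m(n)=\int_0^{\infty}x^{n-1}e_m(x)\,dx$ from (\ref{eq:e_m}) produces a factor proportional to $m(j/\kappa)$ on the left and to $\overline m(j/\kappa)$ on the right. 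The extra factor $1/m'(j/\kappa)=m(j/\kappa)/\overline m(j/\kappa)$ supplied by $\Bo_{m',x^{1/\kappa}}w^{j/\kappa}$ exactly cancels the discrepancy, and both sides reduce to the same explicit single $\zeta$-integral.

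The main obstacle is justifying the interchange of $\Bo_{m',x^{1/\kappa}}$ with the $\zeta$-integral along the unbounded ray. For this I would rely on the standing assumptions: $e_m(\zeta w)$ is exponentially flat of order $k$ as $|\zeta|\to\infty$ by Definition \ref{df:moment}, $\lambda(\zeta)$ has only polynomial growth, and the series defining $E_{\widetilde m}(\zeta^{1/\kappa}x^{1/\kappa})$ converges absolutely and uniformly on compact sets (cf.\ Proposition \ref{pr:order}). These together make the double integral absolutely convergent, which permits the termwise application of the Borel transform under the integral sign. The remaining exchange of the compact $w$-contour with the power series expansion of $\widehat u\in\Oo_{1/\kappa}(D_r,\EE)$ on $\{|w|=\varepsilon\}\subset D_r$ is immediate from uniform convergence, so once the first interchange is secured, the monomial computation sketched above closes the argument.
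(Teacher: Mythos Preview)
The paper does not prove this proposition; it is quoted from \cite[Proposition~8]{Mic8}, so there is no in-paper argument to compare against. Your outline is reasonable in spirit, and the first manoeuvre---pushing $\Bo_{m',x^{1/\kappa}}$ through the double integral so that it acts on $E_{\widetilde m}$ and turns it into $E_{\widetilde{\overline m}}$---is correct and is indeed the heart of the matter.

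The gap is in your monomial step. You write that ``evaluating the inner $w$-integral on such a monomial via the moment identity $m(n)=\int_0^{\infty}x^{n-1}e_m(x)\,dx$ produces a factor proportional to $m(j/\kappa)$''. But in (\ref{eq:lambda}) the $w$-integral is the \emph{outer} integral, and it is a contour integral $\kappa$ times around $|w|=\varepsilon$, not a Mellin transform along $(0,\infty)$; the Mellin identity (\ref{eq:e_m}) cannot be applied to it directly. Moreover, the inner $\zeta$-ray has direction $\theta\in(-\arg w-\frac{\pi}{2k},-\arg w+\frac{\pi}{2k})$, so its path depends on $w$; one cannot simply freeze $\zeta$ and integrate $w$ first. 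Even after the natural substitution $s=\zeta w$ (which does straighten the inner integral onto a ray near $\RR_+$ and brings in $e_m(s)$), the factor $\lambda(s/w)$ still couples the two variables, so no clean factor $m(j/\kappa)$ drops out of the $w$-integral alone.

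A route that does close is: after the substitution $s=\zeta w$, expand $\lambda(s/w)=\sum_{l\le L}c_l\,s^{l/\kappa}w^{-l/\kappa}$ (convergent for $|s/w|\ge r_0$), so that the $w$-contour integral becomes a residue/coefficient extraction and the remaining $s$-integral is the one to which (\ref{eq:e_m}) applies, yielding the factors $m((j+?)/\kappa)$ and $\overline m((j+?)/\kappa)$ that match via $m'(j/\kappa)$. Equivalently, one may reduce via (\ref{eq:kappa}) to the $\kappa=1$ case and then argue termwise in this expansion of $\lambda$, which makes the commutation a direct consequence (coefficient by coefficient) of the elementary identity in Proposition~\ref{pr:commutation} extended to $\partial_{m}^{-1}$. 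Either way, the moment identity enters through the \emph{radial} $s$-integral, not through the circular $w$-integral as you wrote.
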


Using Proposition \ref{pr:commutation2} we are able to extend Definition \ref{df:pseudo_1} to the formal power series and to the moment functions of real orders. 
\begin{Df}[{\cite[Definition 14]{Mic8}}]
\label{df:pseudo}
 Let $s\in\RR$, $m$ be a moment function of order $\widetilde{s}\in\RR$ and 
 $\lambda(\zeta)$ be an analytic function of the variable
$\xi=\zeta^{1/\kappa}$ for $|\zeta|\geq r_0$
of polynomial growth at infinity.
A \emph{moment pseudodifferential operator
 $\lambda(\partial_{m,z})$} for the formal power series $\widehat{\varphi}\in\EE[[z^{\frac{1}{\kappa}}]]_s$
 is defined by
 \begin{equation}
  \label{eq:l}
  \lambda(\partial_{m,z})\widehat{\varphi}(z):=\Bo_{\Gamma_{-\overline{s}},z^{1/\kappa}}\lambda(\partial_{\overline{m},z})
  \Bo_{\Gamma_{\overline{s}},z^{1/\kappa}}\widehat{\varphi}(z),
 \end{equation}
 where $\overline{m}:=m\Gamma_{\overline{s}}$, $\overline{s}:=\max\{s,1-\widetilde{s}\}$
 and the operator
 $\lambda(\partial_{\overline{m},z})$ is constructed in Definition \ref{df:pseudo_1}.
\end{Df}

\begin{Rem}
 Note that the choice of $\overline{s}:=\max\{s,1-\widetilde{s}\}$ guarantees that 
 $\Bo_{\Gamma_{\overline{s}},z^{1/\kappa}}\widehat{\varphi}(z)\in\EE[[z^{\frac{1}{\kappa}}]]_0$ and
 that $\overline{m}$ is a moment function of positive order $\overline{s}+\widetilde{s}\geq 1$. Thanks to that, the operator
 $\lambda(\partial_{m,z})$ given by (\ref{eq:l}) is well-defined. 
\end{Rem}

\begin{Df}[{\cite[Definition 9]{Mic7}}]
   We define a \emph{pole order $q\in\QQ$} and a \emph{leading term $\lambda_0\in\CC\setminus\{0\}$}
   of $\lambda(\zeta)$ as the numbers satisfying the formula
   $\lim_{\zeta\to\infty}\lambda(\zeta)/\zeta^q=\lambda_0$.
   We write it also $\lambda(\zeta)\sim\lambda_0\zeta^q$.
\end{Df}

At the end of the section we recall the estimate given in \cite{Mic8}
\begin{Lem}[{\cite[Lemma 2]{Mic8}}]
\label{le:estimation}
Let $\widehat{\varphi}\in\CC[[z^{\frac{1}{\kappa}}]]_s$, $s\leq 0$, $m$ be a moment function of order $1/k>0$ and $\lambda(\partial_{m,z})$
be a moment pseudodifferential
operator with $\lambda(\zeta)\sim\lambda_0\zeta^q$ and $q\in\QQ$.
Then there exist $r>0$ and $A,B<\infty$ such that
\begin{gather*}
 \sup_{|z|<r}|\lambda^j(\partial_{m,z})\varphi(z)|\leq |\lambda_0|^jAB^{j}\Gamma_{q^+(s+1/k)}(j)\quad \textrm{for} 
 \quad j=0,1,\dots,
\end{gather*}
where $q^+:=\max\{0,q\}$.
\end{Lem}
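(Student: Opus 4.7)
The plan is to use the integral representation (\ref{eq:lambda}) for $\lambda^{j}(\partial_{m,z})\varphi$. Since $\lambda(\zeta)^j$ is again analytic for $|\zeta|\geq r_0$ of polynomial growth, with $\lambda(\zeta)^j\sim\lambda_0^j\zeta^{qj}$, Definition \ref{df:pseudo_1} applies directly and yields
$$\lambda^j(\partial_{m,z})\varphi(z)=\frac{1}{2\kappa\pi i}\oint_{|w|=\varepsilon}^{\kappa}\varphi(w)\int_{r_0 e^{i\theta}}^{\infty(\theta)}\lambda(\zeta)^j E_{\widetilde{m}}(\zeta^{1/\kappa}z^{1/\kappa})\frac{e_m(\zeta w)}{\zeta w}\,d\zeta\,dw$$
for any $\varepsilon$ in the domain of analyticity of $\varphi$ and any $|z|<\varepsilon$. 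Since $s\leq 0$, Remark \ref{re:entire} guarantees that $\varphi$ is holomorphic on a disc when $s=0$ and entire of exponential order at most $-1/s$ when $s<0$, so $|\varphi(w)|\leq A_0$ in the first case and $|\varphi(w)|\leq A_0 e^{B_0|w|^{-1/s}}$ in the second.

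Next I would estimate the integrand factor by factor on the ray $\zeta=re^{i\theta}$: the polynomial growth yields $|\lambda(\zeta)^j|\leq|\lambda_0|^j(C_1 r)^{q^+ j}$; since $\widetilde m(u)=m(u/\kappa)$ has order $1/(k\kappa)$, the kernel satisfies $|E_{\widetilde m}(\zeta^{1/\kappa}z^{1/\kappa})|\leq A_1 e^{B_1|z\zeta|^k}$; and exponential flatness of $e_m$ gives $|e_m(\zeta w)|\leq A_2 e^{-(|\zeta w|/B_2)^k}$ in the admissible sector. Choosing $\varepsilon\geq c_0|z|$ with $c_0>B_1^{1/k}B_2$ makes the net exponent $(B_1|z|^k-(\varepsilon/B_2)^k)r^k$ negative, say $-c_1\varepsilon^k r^k$, and the substitution $u=c_1\varepsilon^k r^k$ reduces the inner integral to a constant multiple of $\varepsilon^{-q^+ j}\Gamma(q^+ j/k)$.

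In the case $s=0$ one keeps $\varepsilon$ fixed below the radius of analyticity of $\varphi$, and the desired estimate follows at once since $s+1/k=1/k$ and all numeric constants are absorbed into $A,B$. The case $q\leq 0$ is also immediate: $r^{qj}$ is non-increasing, the inner integral is bounded independently of $j$, and $\Gamma_0(j)=1$. The substantive case is $s<0$ with $q>0$, where one must let $\varepsilon=\varepsilon(j)\to\infty$ in order to absorb the factorials, at the price of the factor $e^{B_0\varepsilon^{-1/s}}$. Minimising $B_0\varepsilon^{-1/s}-qj\ln\varepsilon$ gives the critical scale $\varepsilon^{-1/s}\asymp qj$, and a direct application of Stirling's formula converts $e^{B_0\varepsilon^{-1/s}}\varepsilon^{-qj}\Gamma(qj/k)$ into $AB^j\Gamma(q(s+1/k)j)=AB^j\Gamma_{q(s+1/k)}(j)$; the convention $\Gamma_\sigma(u)=1/\Gamma(1-\sigma u)$ for $\sigma<0$ makes this formula uniform across the three possibilities $s+1/k\gtreqless 0$. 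The one genuine obstacle is precisely this balancing step: once the optimal $\varepsilon(j)$ is pinned down, the identification of the ratio $\Gamma(qj/k)/\Gamma(qj(-s))$ with a constant times $C^j\Gamma_{q(s+1/k)}(j)$ via Stirling is what produces the exponent $s+1/k$ in the statement, and everything else is routine Laplace-integral estimation.
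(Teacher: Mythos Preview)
The present paper does not prove this lemma: it is quoted from \cite[Lemma 2]{Mic8} with no argument given here, so there is no in-paper proof to compare your proposal against.

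That said, your outline is the natural argument from the integral representation (\ref{eq:lambda}) and is correct. The three kernel bounds you invoke are exactly those supplied by Definitions~\ref{df:moment}--\ref{df:small} (exponential flatness of $e_m$ and $E_{\widetilde m}\in\Oo^{k\kappa}(\CC)$); the substitution $u=c_1\varepsilon^k r^k$ turns the inner integral into an incomplete gamma function dominated by $(c_1\varepsilon^k)^{-q^+j/k}\Gamma(q^+j/k)$; and in the substantive case $s<0$, $q>0$, choosing $\varepsilon^{-1/s}\asymp qj$ and applying Stirling collapses $e^{B_0\varepsilon^{-1/s}}\varepsilon^{-qj}\Gamma(qj/k)$ into $C^j\Gamma_{q(s+1/k)}(j)$ uniformly in the sign of $s+1/k$, exactly as you say. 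One cosmetic point: in the $q\leq 0$ branch it is cleaner to note that $|\lambda(\zeta)|$ is bounded on $|\zeta|\geq r_0$ and pull out $\sup|\lambda|^j$ before integrating, which makes the claimed $j$-independence of the remaining integral immediate; your phrasing via $(C_1 r)^{q^+j}=1$ amounts to the same thing once the constant is absorbed into $B^j$.
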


\section{Formal solutions and Gevrey estimates}
   In a similar way to \cite{Mic7} we generalise the results of \cite{Mic8} to inhomogeneous case. Without loss
of generality we may assume that the initial data vanish. So it is sufficient to consider
the following Cauchy problem for general inhomogeneous linear moment partial differential equation with
 constant coefficients
 \begin{equation}
  \label{eq:inhomo0}
  \left\{
   \begin{array}{lll}
    P(\partial_{m_1,t},\partial_{m_2,z})u=\widehat{f}\in\CC[[t,z]]_{\tilde{s}_1,\tilde{s}_2}&&\\
     \partial_{m_1,t}^j u(0,z)=0&
    \textrm{for}&j=0,\dots,n-1,
   \end{array}
  \right.
 \end{equation}
 where $m_1$, $m_2$ are moment functions of orders $s_1,s_2\in\RR$ respectively,
 $(\tilde{s}_1,\tilde{s}_2)\in\RR^2$ is a Gevrey order of the inhomogeneity $\widehat{f}$ and
 $P(\lambda,\zeta)$ is a general polynomial of two variables, which is of order $n$ with respect to $\lambda$.
 It means that 
 \begin{equation*}
     P(\lambda,\zeta)=P_0(\zeta)\lambda^n-\sum_{j=1}^nP_j(\zeta)\lambda^{n-j}
     =P_0(\zeta)\widetilde{P}(\lambda,\zeta)=P_0(\zeta)\prod_{\alpha=1}^l(\lambda-\lambda_{\alpha}(\zeta))^{n_{\alpha}},
 \end{equation*}
 where $\lambda_1(\zeta),\dots,\lambda_l(\zeta)$ are the roots of the characteristic equation
   $P(\lambda,\zeta)=0$ with multiplicities $n_1,\dots,n_l$ ($n_1+\cdots+n_l=n$) respectively.
     Since $\lambda_{\alpha}(\zeta)$ are algebraic functions,
     we may assume that there exist $\kappa\in\NN$ and $r_0<\infty$ such that
   $\lambda_{\alpha}(\zeta)$ are holomorphic functions of the variable $\xi=\zeta^{1/\kappa}$
   (for $|\zeta|\geq r_0$) and, moreover, there exist $\lambda_{\alpha}\in\CC\setminus\{0\}$ and 
   $q_{\alpha}=\mu_{\alpha}/\nu_{\alpha}$
   (for some relatively prime numbers $\mu_{\alpha}\in\ZZ$ and $\nu_{\alpha}\in\NN$) such that
   $\lambda_{\alpha}(\zeta)\sim\lambda_{\alpha}\zeta^{q_{\alpha}}$ for $\alpha=1,\dots,l$.
    
   Hence $\lambda_{\alpha}(\partial_{m_2,z})$ are well-defined moment pseudodifferential operators
   and consequently also the operator
   $$
    \widetilde{P}(\partial_{m_1,t},\partial_{m_2,z})=(\partial_{m_1,t}-\lambda_1(\partial_{m_2,z}))^{n_1}\cdots
    (\partial_{m_1,t}-\lambda_l(\partial_{m_2,z}))^{n_l}
   $$
   is well-defined.
 
 If $P_0(\zeta)\neq \textrm{const.}$ then the formal solution of (\ref{eq:inhomo0}) is not uniquely determined.
 For this reason we choose a formal power series $\widehat{g}\in\CC[[t,z]]_{\tilde{s}_1,\tilde{s}_2}$
 satisfying the equation $P_0(\partial_{m_2,z})\widehat{g}=\widehat{f}$. For such $\widehat{g}$ we 
 may construct the uniquely determined solution $\widehat{u}$ of
 \begin{equation*}
  \left\{
   \begin{array}{lll}
    \widetilde{P}(\partial_{m_1,t},\partial_{m_2,z})u=\widehat{g}\in\CC[[t,z]]_{\tilde{s}_1,\tilde{s}_2}&&\\
     \partial_{m_1,t}^j u(0,z)=0&
    \textrm{for}&j=0,\dots,n-1,
   \end{array}
  \right.
 \end{equation*}
 which is also a formal solution of (\ref{eq:inhomo0}) and is called the
 \emph{formal solution of (\ref{eq:inhomo0}) determined by $\widehat{g}$}.

In a similar way to \cite[Theorem 1]{Mic8}, we generalise the results for the analytic Cauchy data given in 
\cite[Theorems 7 and 8]{Mic7}
  as follows
\begin{Th}
  \label{th:gevrey}
  Let $\widehat{u}$ be a formal solution of (\ref{eq:inhomo0}) determined by
  $\widehat{g}\in\CC[[t,z]]_{\tilde{s}_1,\tilde{s}_2}$.
  Then $\widehat{u}=\sum_{\alpha=1}^l\sum_{\beta=1}^{n_{\alpha}}\widehat{u}_{\alpha\beta}$ with
   $\widehat{u}_{\alpha\beta}$ being a formal solution of simple inhomogeneous pseudodifferential equation
   \begin{equation}
   \label{eq:gevrey}
    \left\{
    \begin{array}{l}
     (\partial_{m_1,t}-\lambda_{\alpha}(\partial_{m_2,z}))^{\beta} u_{\alpha\beta}=\widehat{g}_{\alpha\beta}\\
     \partial_{m_1,t}^j u_{\alpha\beta}(0,z)=0\ \textrm{for}\ j=0,\dots,\beta-1
    \end{array}
    \right.
   \end{equation}
   where $\widehat{g}_{\alpha\beta}(t,z):=d_{\alpha\beta}(\partial_{m_2,z})\widehat{g}(t,z)\in
   \CC[[t,z^{\frac{1}{\kappa}}]]_{\tilde{s}_1,\tilde{s}_2}$ and $d_{\alpha\beta}(\zeta)$ is a holomorphic function
   of the variable $\xi=\zeta^{\frac{1}{\kappa}}$ and of polynomial growth.
   \par
   Moreover, if $q_{\alpha}$ is a pole order of $\lambda_{\alpha}(\zeta)$ for some $\alpha\in\{1,\dots,l\}$
   and $q_{\alpha}^+:=\max\{0,q_{\alpha}\}$,
   then a formal solution $\widehat{u}_{\alpha\beta}$ is a Gevrey series of order
   $Q_{\alpha}:=\max\{q_{\alpha}^+(s_2+\tilde{s}_2) - s_1, \tilde{s}_1\}$
   with respect to $t$. More precisely,
   $\widehat{u}_{\alpha\beta}\in\CC[[t,z^{\frac{1}{\kappa}}]]_{Q_{\alpha},\tilde{s}_2}$
   or, equivalently, $\widehat{u}_{\alpha\beta}\in G_{\tilde{s}_2,1/\kappa}[[t]]_{Q_{\alpha}}$.
\end{Th}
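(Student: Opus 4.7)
The plan is to reduce (\ref{eq:inhomo0}) to the simple pseudodifferential equations (\ref{eq:gevrey}) via a partial-fraction decomposition, to solve each simple equation by a Neumann expansion, and then to control the resulting coefficients through the iterated-operator estimate of Lemma \ref{le:estimation}, after applying moment Borel transforms in $z$ to satisfy its hypotheses. Starting from the identity
\[
\frac{1}{\widetilde{P}(\lambda,\zeta)}=\sum_{\alpha=1}^{l}\sum_{\beta=1}^{n_{\alpha}}\frac{d_{\alpha\beta}(\zeta)}{(\lambda-\lambda_{\alpha}(\zeta))^{\beta}},
\]
each $d_{\alpha\beta}$ is a rational expression in $\lambda_{1}(\zeta),\dots,\lambda_{l}(\zeta)$, hence analytic in $\xi=\zeta^{1/\kappa}$ for $|\zeta|$ large and of polynomial growth. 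Setting $\widehat{g}_{\alpha\beta}:=d_{\alpha\beta}(\partial_{m_{2},z})\widehat{g}$ (which inherits Gevrey order $(\tilde{s}_{1},\tilde{s}_{2})$, since a moment pseudodifferential operator of polynomial growth preserves Gevrey order) and letting $\widehat{u}_{\alpha\beta}$ be the unique solution of (\ref{eq:gevrey}), applying $\widetilde{P}(\partial_{m_{1},t},\partial_{m_{2},z})$ to $\sum_{\alpha,\beta}\widehat{u}_{\alpha\beta}$ and invoking the identity above reproduces $\widehat{g}$, so by uniqueness $\widehat{u}=\sum_{\alpha,\beta}\widehat{u}_{\alpha\beta}$. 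Factoring $\partial_{m_{1},t}-\lambda_{\alpha}(\partial_{m_{2},z})=\partial_{m_{1},t}(I-\partial_{m_{1},t}^{-1}\lambda_{\alpha}(\partial_{m_{2},z}))$ and expanding the $\beta$-th power of the geometric series, I obtain, writing $\widehat{g}_{\alpha\beta}(t,z)=\sum_{j\geq 0}g_{\alpha\beta,j}(z)\,t^{j}/m_{1}(j)$,
\[
\widehat{u}_{\alpha\beta}(t,z)=\sum_{n\geq\beta}\frac{u_{\alpha\beta,n}(z)}{m_{1}(n)}\,t^{n},\qquad u_{\alpha\beta,n}(z)=\sum_{j=0}^{n-\beta}\binom{n-j-1}{\beta-1}\lambda_{\alpha}^{n-\beta-j}(\partial_{m_{2},z})g_{\alpha\beta,j}(z).
\]

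For the Gevrey estimate I work in the Banach space $G_{\tilde{s}_{2},1/\kappa}$. Proposition \ref{pr:order} together with the hypothesis $\widehat{g}_{\alpha\beta}\in G_{\tilde{s}_{2},1/\kappa}[[t]]_{\tilde{s}_{1}}$ yields $\|g_{\alpha\beta,j}\|_{G_{\tilde{s}_{2},1/\kappa}(r)}\leq AR^{j}\Gamma_{s_{1}+\tilde{s}_{1}}(j)$. To bound $\|\lambda_{\alpha}^{k}(\partial_{m_{2},z})g_{\alpha\beta,j}\|_{G_{\tilde{s}_{2},1/\kappa}}$ I apply $\Bo_{\Gamma_{s'},z^{1/\kappa}}$ with $s'\geq\max\{\tilde{s}_{2},\,1-s_{2}\}$; Proposition \ref{pr:commutation2} commutes this past $\lambda_{\alpha}(\partial_{m_{2},z})$, turning it into $\lambda_{\alpha}(\partial_{m_{2}\Gamma_{s'},z})$ acting on the analytic function $\Bo_{\Gamma_{s'},z^{1/\kappa}}g_{\alpha\beta,j}$ (of Gevrey order $\tilde{s}_{2}-s'\leq 0$), and Lemma \ref{le:estimation} applied with $m=m_{2}\Gamma_{s'}$ of positive order $s_{2}+s'$ and $s=\tilde{s}_{2}-s'$ gives
\[
\|\lambda_{\alpha}^{k}(\partial_{m_{2},z})g_{\alpha\beta,j}\|_{G_{\tilde{s}_{2},1/\kappa}}\leq C^{k}\Gamma_{q_{\alpha}^{+}(s_{2}+\tilde{s}_{2})}(k)\,\|g_{\alpha\beta,j}\|_{G_{\tilde{s}_{2},1/\kappa}},
\]
with the growth index $q_{\alpha}^{+}(s_{2}+\tilde{s}_{2})$ independent of the auxiliary exponent $s'$.

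Combining the two bounds, using Stirling-type inequalities of the form $\Gamma_{a}(n-j)\Gamma_{b}(j)\leq C^{n}\Gamma_{\max(a,b)}(n)$ and $\Gamma_{c}(n)/m_{1}(n)\leq C^{n}\Gamma_{c-s_{1}}(n)$ (the latter from Proposition \ref{pr:order}), I conclude
\[
\frac{\|u_{\alpha\beta,n}\|_{G_{\tilde{s}_{2},1/\kappa}}}{m_{1}(n)}\leq C^{n}\Gamma_{\max\{q_{\alpha}^{+}(s_{2}+\tilde{s}_{2})-s_{1},\,\tilde{s}_{1}\}}(n)=C^{n}\Gamma_{Q_{\alpha}}(n),
\]
which by Proposition \ref{pr:prop2} is exactly the claim $\widehat{u}_{\alpha\beta}\in G_{\tilde{s}_{2},1/\kappa}[[t]]_{Q_{\alpha}}=\CC[[t,z^{1/\kappa}]]_{Q_{\alpha},\tilde{s}_{2}}$. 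The main obstacle is the pseudodifferential-operator bound: the auxiliary Borel exponent $s'$ must be chosen so that every intermediate moment function has strictly positive order (so that Lemma \ref{le:estimation} applies), while the growth index $q_{\alpha}^{+}(s_{2}+\tilde{s}_{2})$ supplied by the lemma must be shown to be independent of this choice and, combined through Stirling with the contribution $s_{1}+\tilde{s}_{1}$ coming from the inhomogeneity, to yield precisely $Q_{\alpha}$.
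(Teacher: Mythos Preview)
Your proposal is correct and follows essentially the same route as the paper. Both arguments reduce to the simple equations via the partial-fraction identity for $1/\widetilde{P}(\lambda,\zeta)$ (the paper invokes \cite[Theorem 7]{Mic7} for this), solve each simple equation by the Neumann expansion (the paper cites \cite[Lemma 8]{Mic7}), apply a $z$-Borel transform of sufficiently large order to make the moment function in $z$ positive-order and the data analytic, and then appeal to Lemma~\ref{le:estimation} to get the growth $\Gamma_{q_\alpha^+(s_2+\tilde{s}_2)}(k)$ of the iterated operator, which combined via Stirling with the $\Gamma_{s_1+\tilde{s}_1}(j)$ contribution from the inhomogeneity yields $Q_\alpha$. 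The only organizational difference is that the paper applies Borel transforms in both variables at the outset (setting $\widehat{v}=\Bo_{\Gamma_{\tilde{s}_1},t}\Bo_{\Gamma_{\overline{s}_2},z}\widehat{u}$) and works entirely in the transformed picture before transferring back via Proposition~\ref{pr:prop2}, whereas you leave the $t$-variable alone and invoke the $z$-Borel only inside the coefficient estimate; since the $t$-transform merely reindexes the coefficient bounds, this changes nothing of substance.

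One small remark: when you write the bound as an inequality in the $G_{\tilde{s}_2,1/\kappa}$ norm, note that Lemma~\ref{le:estimation} literally returns a sup-norm on a disc, i.e.\ the $G_{s',1/\kappa}$ norm of $\lambda_\alpha^k(\partial_{m_2,z})g_{\alpha\beta,j}$ rather than its $G_{\tilde{s}_2,1/\kappa}$ norm when $s'>\tilde{s}_2$. The clean way to phrase the conclusion is therefore exactly as the paper does: deduce $\Bo_{\Gamma_{s'},z^{1/\kappa}}\widehat{u}_{\alpha\beta}\in G_{0,1/\kappa}[[t]]_{Q_\alpha}$ and then transfer back via Proposition~\ref{pr:prop2}. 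This is the same step you carry out implicitly, so it is a matter of wording rather than a gap.
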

\begin{proof}
  For fixed $\overline{s}_2>\max\{\tilde{s}_2,-s_2\}$ we define $\widehat{v}:=\Bo_{\Gamma_{\tilde{s}_1},t}\Bo_{\Gamma_{\overline{s}_2},z}\widehat{u}$.
 By Proposition \ref{pr:commutation}, $\widehat{v}$ is a formal solution of
\[
    \left\{
    \begin{array}{l}
     P(\partial_{\tilde{m}_1,t},\partial_{\overline{m}_2,z})v=\Bo_{\Gamma_{\tilde{s}_1},t}\Bo_{\Gamma_{\overline{s}_2},z}\widehat{f}\\
     \partial^j_{\tilde{m}_1,t} v(0,z)=0\ \textrm{for}\ j=0,\dots,n-1,
    \end{array}
    \right.
\]
determined by $\widehat{h}=\Bo_{\Gamma_{\tilde{s}_1},t}\Bo_{\Gamma_{\overline{s}_2},z}\widehat{g}$,
where $\tilde{m}_1:=m_1\Gamma_{\tilde{s}_1}$ and $\overline{m}_2:=m_2\Gamma_{\overline{s}_2}$.
Since $\overline{m}_2$ is a moment function of order $\overline{s}_2+s_2>0$ and 
$\widehat{h}\in\CC[[t,z]]_{0,\tilde{s}_2-\overline{s}_2}$
with $\tilde{s}_2-\overline{s}_2<0$, repeating the proof of
\cite[Theorem 7]{Mic7} we conclude that
 $\widehat{v}=\sum_{\alpha=1}^l\sum_{\beta=1}^{n_{\alpha}}\widehat{v}_{\alpha\beta}$ with $\widehat{v}_{\alpha\beta}$ being
 a formal solution of
\[
    \left\{
    \begin{array}{l}
     (\partial_{\tilde{m}_1,t}-\lambda_{\alpha}(\partial_{\overline{m}_2,z}))^{\beta} v_{\alpha\beta}=\widehat{h}_{\alpha\beta}(t,z)\\
     \partial_{\tilde{m}_1,t}^j v_{\alpha\beta}(0,z)=0\ \textrm{for}\ j=0,\dots,\beta-1,
    \end{array}
    \right.
\]
where $\widehat{h}_{\alpha\beta}(t,z)=d_{\alpha\beta}(\partial_{\overline{m}_2,z})\widehat{h}(t,z)\in
\CC[[t,z^{\frac{1}{\kappa}}]]_{0,\tilde{s}_2-\overline{s}_2}$ and $d_{\alpha\beta}(\zeta)$ is a 
holomorphic function of the variable $\xi=\zeta^{1/\kappa}$ and of polynomial growth.
Hence, by Definition \ref{df:pseudo},  
$\widehat{u}=\sum_{\alpha=1}^l\sum_{\beta=1}^{n_{\alpha}}\widehat{u}_{\alpha\beta}$
where $\widehat{u}_{\alpha\beta}=\Bo_{\Gamma_{-\tilde{s}_1},t}\Bo_{\Gamma_{-\overline{s}_2},z^{1/\kappa}}\widehat{v}_{\alpha\beta}$
satisfies (\ref{eq:gevrey})
with
\begin{eqnarray*}
 \widehat{g}_{\alpha\beta}(t,z)&=&\Bo_{\Gamma_{-\tilde{s}_1},t}\Bo_{\Gamma_{-\overline{s}_2},z^{1/\kappa}}\widehat{h}_{\alpha\beta}(t,z)=
 \Bo_{\Gamma_{-\tilde{s}_1},t}\Bo_{\Gamma_{-\overline{s}_2},z^{1/\kappa}}d_{\alpha\beta}(\partial_{\overline{m}_2,z})
\widehat{h}(t,z)\\
&=&d_{\alpha\beta}(\partial_{m_2,z})\Bo_{\Gamma_{-\tilde{s}_1},t}\Bo_{\Gamma_{-\overline{s}_2},z}\widehat{h}(t,z) =
d_{\alpha\beta}(\partial_{m_2,z})\widehat{g}(t,z)
\end{eqnarray*}
for $\beta=1,\dots,n_{\alpha}$ and $\alpha=1,\dots,l$. 
\par
To find the Gevrey order of $\widehat{v}_{\alpha\beta}(t,z)=\sum_{j=0}^{\infty}\frac{v_{\alpha\beta j}(z)}{\tilde{m}_1(j)}t^j$ with respect to $t$,
observe that
by \cite[Lemma 8]{Mic7}
\begin{equation}
 \label{eq:v}
 \widehat{v}_{\alpha\beta}(t,z)=\sum_{k=\beta-1}^{\infty}{k\choose \beta-1}(\partial^{-1}_{\tilde{m}_1,t})^{k+1} 
 \lambda_{\alpha\beta}^{k-\beta+1}(\partial_{\overline{m}_2,z})\widehat{h}_{\alpha\beta}(t,z).
\end{equation}
Since
\begin{equation}
\label{eq:h}
\widehat{h}_{\alpha\beta}(t,z)=\sum_{j=0}^{\infty}\frac{h_{\alpha\beta, j}(z)}{\tilde{m}_1(j)}t^j
\end{equation}
is a Gevrey series of order $0$ with respect to $t$ and $\tilde{m}_1$ is a~moment function of order $s_1+\tilde{s}_1$, there exist $r>0$ and
$A,B<\infty$ such that
\begin{gather*}
 \sup_{|z|<r}|h_{\alpha\beta, j}(z)|\leq AB^j\Gamma_{s_1+\tilde{s}_1}(j) \quad \textrm{for} \quad j=0,1,\dots
\end{gather*}
Substituting (\ref{eq:h}) into (\ref{eq:v}) we obtain
\begin{gather*}
 \widehat{v}_{\alpha\beta}(t,z)=\sum_{j=\beta}^{\infty}\frac{t^j}{\tilde{m}_1(j)}\sum_{k=\beta-1}^{j-1}{k\choose \beta-1}
 \lambda_{\alpha\beta}^{k-\beta+1}(\partial_{\overline{m}_2,z})h_{\alpha\beta, j-k-1}(z).
\end{gather*}
Hence, by Lemma \ref{le:estimation}, there exist $r>0$ and $C,D<\infty$ such that for every $z\in D_r$
\begin{align*}
|v_{\alpha\beta j}(z)|&\leq
\sum_{k=\beta-1}^{j-1}{k\choose \beta-1}
| \lambda_{\alpha\beta}^{k-\beta+1}(\partial_{\overline{m}_2,z})h_{\alpha\beta, j-k-1}(z)|\\
&\leq CD^j \sum_{k=\beta-1}^{j-1}\Gamma_{q_{\alpha}^+(\tilde{s}_2-\overline{s}_2+s_2+\overline{s}_2)}(k)
\Gamma_{s_1+\tilde{s}_1}(j-k-1).
\end{align*}
Hence there exist $A,B<\infty$ such that
\begin{gather*}
 \sup_{|z|<r}|v_{\alpha\beta j}(z)|\leq AB^j\Gamma_{Q_{\alpha}+s_1}(j)\quad\textrm{for}\quad j=0,1,\dots,
\end{gather*}
where $Q_{\alpha}=\max\{q_{\alpha}^+(s_2+\tilde{s}_2) - s_1, \tilde{s}_1\}$.

It means that $\widehat{v}_{\alpha\beta}(t,z)=\sum_{j=0}^{\infty}\frac{v_{\alpha\beta j}(z)}{\tilde{m}_1(j)}t^j\in
G_{\tilde{s}_2-\overline{s}_2,1/\kappa}[[t]]_{Q_{\alpha}-\tilde{s}_1}$.
Finally, by Proposition \ref{pr:prop2} we conclude that
$
\widehat{u}_{\alpha\beta}=\Bo_{\Gamma_{-\tilde{s}_1},t}\Bo_{\Gamma_{-\overline{s}_2},z^{1/\kappa}}\widehat{v}_{\alpha\beta}\in
G_{\tilde{s}_2,1/\kappa}[[t]]_{Q_{\alpha}}
$
or, equivalently, $\widehat{u}_{\alpha\beta}\in\CC[[t,z^{\frac{1}{\kappa}}]]_{Q_{\alpha},\tilde{s}_2}$.
\end{proof}

\section{Integral representation and analytic solution}
    By Proposition \ref{pr:sum}, 
    to study analytic continuation or $K$-summability of $\widehat{u}$ satisfying (\ref{eq:gevrey}) we may 
    replace $m_1$ and $m_2$
    by $\Gamma_{s_1}$ and $\Gamma_{s_2}$, where $s_1$ and $s_2$ are orders of $m_1$ and $m_2$ respectively.
    So by Propositions \ref{pr:commutation} and \ref{pr:commutation2}, we may assume that $\widehat{u}$ is a formal solution of
    \begin{equation}
     \label{eq:gamma}
     \left\{
     \begin{array}{l}
     (\partial_{\Gamma_{s_1},t}-\lambda(\partial_{\Gamma_{s_2},z}))^{\beta} u=\widehat{g}\\
      \partial_{\Gamma_{s_1},t}^j u(0,z)=0\ \textrm{for}\ j=0,\dots,\beta-1,
     \end{array}
     \right.
    \end{equation}
    where $\lambda(\zeta)$ is a root of the characteristic equation of (\ref{eq:inhomo0}). It means that $\lambda(\zeta)$ is an analytic
    function of the variable $\xi=\zeta^{1/\kappa}$ for $|\zeta|\geq r_0$ and $\lambda(\zeta)\sim\lambda_0\zeta^q$.
    
   By \cite[Lemma 8]{Mic7} the formal solution of (\ref{eq:gamma}) has the power series representation
   \begin{equation}
    \label{eq:formal}
    \widehat{u}(t,z)=\sum_{j=\beta-1}^{\infty}{j \choose \beta -1}
    (\partial_{\Gamma_{s_1},t}^{-1})^{j+1}\lambda^{j-\beta+1}(\partial_{\Gamma_{s_2},z})
    \widehat{g}(t,z).
   \end{equation}
   To find the integral representation of $\widehat{u}$, we first show
   \begin{Lem}
    If $\widehat{\varphi}(x)\in\EE[[x]]$, $k\in\NN$ and $s>0$ then
   \begin{equation}
   \label{eq:integral}
   (\partial^{-1}_{\Gamma_s,x})^k\widehat{\varphi}(x)=
   -\int_0^{x^{1/s}}\widehat{\varphi}(y^s)\partial_y\frac{(x^{1/s}-y)^{ks}}{\Gamma_s(k)}\,dy.
   \end{equation}
   \end{Lem}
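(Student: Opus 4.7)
The plan is to reduce the identity to the monomial case by linearity. Both sides of (\ref{eq:integral}) are $\EE$-linear in $\widehat{\varphi}$, and a uniform $k$-fold exponent shift on the right (the operator $\widehat{\varphi}(x) = \sum_n a_n x^n \mapsto \widehat{\varphi}(y^s) = \sum_n a_n y^{sn}$, then integration against a fixed kernel) means that the coefficient of $x^{n+k}$ on either side depends only on the coefficient of $x^n$ in $\widehat{\varphi}$. So at the level of formal power series it suffices to verify (\ref{eq:integral}) for a single monomial $\widehat{\varphi}(x) = x^n$, $n \in \NN$, and extend termwise.

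For the left-hand side, writing $x^n = (\Gamma_s(n)/\Gamma_s(n))\, x^n$ and iterating the definition of $\partial^{-1}_{\Gamma_s,x}$ exactly $k$ times yields
$$
(\partial^{-1}_{\Gamma_s,x})^k x^n \;=\; \frac{\Gamma_s(n)}{\Gamma_s(n+k)}\, x^{n+k}.
$$
For the right-hand side with $\widehat{\varphi}(y^s) = y^{sn}$, set $X := x^{1/s}$, compute $\partial_y(X-y)^{ks} = -ks(X-y)^{ks-1}$, so the right-hand side becomes
$$
\frac{ks}{\Gamma_s(k)}\int_0^X y^{sn}(X-y)^{ks-1}\,dy
\;=\; \frac{ks}{\Gamma_s(k)}\, X^{sn+ks}\, B(sn+1,ks)
\;=\; \frac{ks\,\Gamma(sn+1)\Gamma(ks)}{\Gamma_s(k)\,\Gamma(sn+ks+1)}\, X^{sn+ks},
$$
where the Beta evaluation comes from the substitution $y = Xt$. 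Using $ks\,\Gamma(ks) = \Gamma(ks+1) = \Gamma_s(k)$ the prefactor collapses, and rewriting $X^{sn+ks} = x^{n+k}$, $\Gamma(sn+1) = \Gamma_s(n)$, $\Gamma(sn+ks+1) = \Gamma_s(n+k)$ recovers exactly $(\Gamma_s(n)/\Gamma_s(n+k))\, x^{n+k}$. Thus the two sides agree on every monomial, which finishes the proof.

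I do not expect any genuine obstacle: the identity is essentially a restatement of the Riemann--Liouville formula for iterated fractional integration, and the verification is pure Beta-function bookkeeping. The only point worth a short remark is the interpretation of the integral when $\widehat{\varphi}$ is merely a formal power series in $\EE[[x]]$: one understands the right-hand side termwise in the expansion $\widehat{\varphi}(y^s) = \sum_n \varphi_n y^{sn}$, and the uniform exponent shift guarantees this is well-defined coefficient-by-coefficient with no convergence issue. If $\widehat{\varphi}$ happens to be convergent on a neighbourhood of the origin, dominated convergence justifies the formula as a genuine identity of holomorphic functions, but this is not needed for the statement.
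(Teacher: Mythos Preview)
Your proof is correct and follows essentially the same route as the paper's: after computing $\partial_y(x^{1/s}-y)^{ks}=-ks(x^{1/s}-y)^{ks-1}$ and substituting $y=x^{1/s}t$, both arguments reduce the identity to the Beta evaluation $\int_0^1(1-t)^{ks-1}t^{ns}\,dt=B(ks,1+ns)$ applied termwise, together with $ks\,\Gamma(ks)=\Gamma_s(k)$. The only cosmetic difference is that the paper keeps the full series $\widehat{\varphi}(x)=\sum_n \varphi_n x^n/\Gamma_s(n)$ throughout while you first isolate a single monomial; the computations are otherwise identical.
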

   \begin{proof}
    After the change of variables $t:=y/x^{1/s}$, the right-hand side of (\ref{eq:integral}) is equal to
    $$
    \textrm{RHS}=\frac{x^k}{\Gamma(ks)}\int_0^1(1-t)^{ks-1}\widehat{\varphi}(t^sx)\,dt.
    $$
    Since $\widehat{\varphi}(x)=\sum_{n=0}^{\infty}\frac{\varphi_n}{\Gamma_s(n)}x^n$, we have 
    $$
    \textrm{RHS}=\sum_{n=0}^{\infty}\frac{\varphi_n x^{n+k}}{\Gamma(ks)\Gamma(1+ns)}
    \int_0^1(1-t)^{ks-1}t^{ns}\,dt.
    $$
    Moreover, using the beta integral formula we conclude
    $$
     \int_0^1(1-t)^{ks-1}t^{ns}\,dt=B(ks,1+ns)=\frac{\Gamma(ks)\Gamma(1+ns)}{\Gamma(1+(k+n)s)}.
    $$
    Hence
    $$
     \textrm{RHS} = \sum_{n=0}^{\infty}\frac{\varphi_n}{\Gamma(1+(k+n)s)}x^{n+k}=
     \sum_{n=k}^{\infty}\frac{\varphi_{n-k}}{\Gamma(1+ns)}x^{n}=\textrm{LHS}.
    $$
   \end{proof}

   We also prove
  \begin{Lem}
    \label{le:e}
    Let $s>0$ and $\beta\in\NN$. Then the function 
    \begin{equation}
     \label{eq:e}
     e_{s,\beta}(x):=\sum_{j=\beta}^{\infty}{j -1 \choose \beta -1}\frac{x^{j}}{\Gamma_{s}(j)}
    \end{equation}
    possesses the following properties
    \begin{enumerate}
     \item[(a)] $e_{s,\beta}(x)\in\Oo^{1/s}(\CC)$,
     \item[(b)] if $s<2$ and $\arg x \in (s\pi/2,2\pi-s\pi/2)$ then $e_{s,\beta}(x)\to 0$ as $x\to\infty$.
    \end{enumerate}
  \end{Lem}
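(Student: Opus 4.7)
The plan is to reduce the analysis of $e_{s,\beta}$ to the classical behaviour of the kernel function $E_{\Gamma_s}(x):=\sum_{j\ge 0}x^j/\Gamma_s(j)$, which by Example~\ref{ex:functions} (with $k=1/s$) coincides with the Mittag--Leffler function $\mathbf{E}_{1/k}$ of Definition~\ref{df:moment}. This function is entire of exponential growth of order $1/s$ and admits a well-known asymptotic expansion outside of its growth sector.

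For part~(a), I would use the elementary bound $\binom{j-1}{\beta-1}\le 2^{j-1}$, which yields
\[
|e_{s,\beta}(x)|\le\tfrac12\sum_{j=\beta}^{\infty}\frac{(2|x|)^j}{\Gamma_s(j)}\le\tfrac12\,E_{\Gamma_s}(2|x|).
\]
Combined with the standard estimate $E_{\Gamma_s}(r)\le A\exp(Br^{1/s})$ (which follows from Stirling applied to $\Gamma_s(j)=\Gamma(1+sj)$), this immediately gives $e_{s,\beta}\in\Oo^{1/s}(\CC)$.

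For part~(b), the key idea is to express $e_{s,\beta}$ as a differential polynomial in $E_{\Gamma_s}$. Using $\binom{j-1}{\beta-1}=\tfrac{1}{(\beta-1)!}\prod_{i=1}^{\beta-1}(j-i)$ together with $(x\partial_x)x^j=jx^j$, one verifies
\[
(\beta-1)!\,e_{s,\beta}(x)=\prod_{i=1}^{\beta-1}(x\partial_x-i)\Bigl[E_{\Gamma_s}(x)-\sum_{j=0}^{\beta-1}\tfrac{x^j}{\Gamma_s(j)}\Bigr],
\]
the operator on the right annihilating every monomial $x^j$ with $1\le j\le\beta-1$ and acting on $x^j$ ($j\ge\beta$) by the required multiplier. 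After the explicit evaluation of the finite polynomial correction, $e_{s,\beta}(x)$ becomes a linear combination (with explicit constants) of the $(x\partial_x)^k E_{\Gamma_s}(x)$, $k=0,\dots,\beta-1$, plus an explicit constant. The behaviour in the sector $\arg x\in(s\pi/2,2\pi-s\pi/2)$ then reduces to the classical Mittag--Leffler asymptotic
\[
E_{\Gamma_s}(x)\sim-\sum_{k\ge 1}\tfrac{x^{-k}}{\Gamma(1-sk)}\quad\text{as }|x|\to\infty,
\]
which shows that $E_{\Gamma_s}$ and each of its $x\partial_x$-derivatives tend to zero in this sector.

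The main difficulty will be the rigorous derivation of the Mittag--Leffler asymptotic in the decay sector. I would establish it via the Hankel integral representation
\[
E_{\Gamma_s}(x)=\frac{1}{2\pi i}\int_H\frac{e^w w^{s-1}}{w^s-x}\,dw,
\]
with $H$ a Hankel contour around the negative real axis. The hypothesis $s<2$ is precisely what ensures that the decay sector $(s\pi/2,2\pi-s\pi/2)$ is nonempty and that every root of $w^s=x$ stays off the negative real axis, so $H$ can be deformed into a region where $\RE w\to-\infty$ without crossing the pole. On such a contour, exponential decay of $e^w$ combined with a power series expansion of $1/(w^s-x)$ in $x^{-1}$ delivers the asymptotic term by term, and differentiating under the integral extends it to each $(x\partial_x)^k E_{\Gamma_s}$.
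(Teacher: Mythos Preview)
Your approach is essentially the paper's: express $e_{s,\beta}$ as a differential expression in the Mittag--Leffler function $\mathbf{E}_s=E_{\Gamma_s}$ and inherit (a) and (b) from the known behaviour of $\mathbf{E}_s$. The paper does this via the compact identity
\[
e_{s,\beta}(x)=\frac{1}{(\beta-1)!}\,x^{\beta}\Bigl(\frac{\mathbf{E}_s(x)-1}{x}\Bigr)^{(\beta-1)},
\]
using ordinary derivatives rather than your $(x\partial_x-i)$ operators, and simply cites \cite[Lemma~6]{B2} for both the exponential bound and the decay of $\mathbf{E}_s$ in the sector, without rederiving them from a Hankel integral. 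Your direct bound for (a) via $\binom{j-1}{\beta-1}\le 2^{j-1}$ is a pleasant shortcut that avoids the differential formula altogether.

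One point needs attention in your argument for (b). Your decomposition leaves an ``explicit constant'' which, once computed, equals $(-1)^{\beta}$; since only the terms $(x\partial_x)^kE_{\Gamma_s}(x)$ are shown to vanish, the argument as written yields $e_{s,\beta}(x)\to(-1)^{\beta}$ in the decay sector, not $0$. Already for $\beta=1$ one has $e_{s,1}=\mathbf{E}_s-1\to-1$. The paper's one-line ``Hence $e_{s,\beta}$ also satisfies (a) and (b)'' conceals the same phenomenon; in the only place the lemma is used (Proposition~\ref{pr:integral}) just boundedness in the sector is needed, and that does follow from either argument.
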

  \begin{proof}
   Observe that by \cite[Lemma 6]{B2} the properties (a) and (b) are satisfied by the Mittag-Leffler function
   $\mathbf{E}_s(x)=\sum_{j=0}^{\infty}\frac{x^j}{\Gamma_s(j)}$.
   Moreover, we can express the function $e_{s,\beta}(x)$ in terms of $\mathbf{E}_s(x)$
   as
   $$
   e_{s,\beta}(x)=\frac{1}{(\beta-1)!}x^{\beta}\Big(\frac{\mathbf{E}_s(x)-1}{x}\Big)^{(\beta-1)}.
   $$
   Hence $e_{s,\beta}$ also satisfies (a) and (b).
  \end{proof}

   Using the definition of moment pseudodifferential operators and the power series representation of $\widehat{u}$
   we can find the integral representation of solution $u$
   \begin{Prop}
   \label{pr:integral}
   Let $\lambda(\zeta)\sim\lambda_0\zeta^q$ be an analytic
    function of the variable $\xi=\zeta^{1/\kappa}$ for $|\zeta|\geq r_0$. We also assume that $g\in\Oo_{1,1/\kappa}(D^2)$, $s_1,s_2>0$ and
    $s_1\geq qs_2$. Then the solution $u$ of (\ref{eq:gamma}) belongs to
   the space $\Oo_{1,1/\kappa}(D^2)$ and has the
   integral representation
   \begin{equation*}
     u(t,z)=\frac{-1}{2\kappa\pi i} \int_0^{t^{\frac{1}{s_1}}}\oint_{|w|=\varepsilon}^{\kappa} g(\tau^{s_1},w)\partial_{\tau}k(t,\tau,z,w)
  \,dw\,d\tau,
    \end{equation*}
    where 
    $$
    k(t,\tau,z,w):=\int_{r_0e^{i\theta}}^{\infty(\theta)}\lambda^{-\beta}(\zeta)e_{s_1,\beta}((t^{\frac{1}{s_1}}-\tau)^{s_1}\lambda(\zeta))
    E_{\Gamma_{\frac{s_2}{\kappa}}}(\zeta^{\frac{1}{\kappa}} z^{\frac{1}{\kappa}})\frac{e_{\Gamma_{s_2}}(\zeta w)}{\zeta w}\,d\zeta
    $$
    and  $e_{s_1,\beta}(x)$ is given by (\ref{eq:e}).
    \par
    Moreover, if $s_1>qs_2$ and $g\in\Oo_{1,1/\kappa}^{\frac{1}{s_1-qs_2}}(\CC\times D)$ then also
    $u\in\Oo_{1,1/\kappa}^{\frac{1}{s_1-qs_2}}(\CC\times D)$.
   \end{Prop}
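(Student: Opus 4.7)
The plan is to start from the explicit formal series representation (\ref{eq:formal}), convert each factor into an integral using the two auxiliary formulas just proved, and then collapse the infinite sum into the generating function $\lambda^{-\beta}(\zeta)\,e_{s_1,\beta}(\,\cdot\,)$.

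\textbf{Deriving the kernel.} For each $j\geq\beta-1$ appearing in (\ref{eq:formal}), I would first apply the integral formula (\ref{eq:integral}) with $s=s_1$, $k=j+1$ to rewrite $(\partial^{-1}_{\Gamma_{s_1},t})^{j+1}$ as a $\tau$-integral on $[0,t^{1/s_1}]$, and then apply Definition \ref{df:pseudo_1} to rewrite each $\lambda^{j-\beta+1}(\partial_{\Gamma_{s_2},z})$ as a double integral over $w$ and $\zeta$. Granting the interchange of summation with the three integrals, the $j$-dependent summands collect into
\[
\sum_{j=\beta-1}^\infty \binom{j}{\beta-1}\lambda^{j-\beta+1}(\zeta)\frac{(t^{1/s_1}-\tau)^{(j+1)s_1}}{\Gamma_{s_1}(j+1)},
\]
which, after reindexing $i=j-\beta+1$ and comparing with (\ref{eq:e}), equals $\lambda^{-\beta}(\zeta)\,e_{s_1,\beta}((t^{1/s_1}-\tau)^{s_1}\lambda(\zeta))$. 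Pulling the outer $\partial_\tau$ back outside the inner integrals produces exactly the stated kernel $k(t,\tau,z,w)$.

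\textbf{Analyticity on $D^2$.} The interchange is justified once uniform convergence of the inner integrals is established. By Lemma \ref{le:e}(a), $e_{s_1,\beta}\in\Oo^{1/s_1}(\CC)$, so on the $\zeta$-ray one has $|e_{s_1,\beta}((t^{1/s_1}-\tau)^{s_1}\lambda(\zeta))|\lesssim\exp(C|t^{1/s_1}-\tau|\,|\zeta|^{q/s_1})$, whereas Definition \ref{df:moment}(1) gives $|e_{\Gamma_{s_2}}(\zeta w)|\lesssim\exp(-c|w|^{1/s_2}|\zeta|^{1/s_2})$. The hypothesis $s_1\geq qs_2$, equivalent to $q/s_1\leq 1/s_2$, ensures that the exponential decay wins over the exponential growth for $(t,z,w)$ in a sufficiently small polydisc (in the boundary case $s_1=qs_2$ one shrinks $|t|$ so that the leading exponential coefficient is negative, or alternatively exploits Lemma \ref{le:e}(b) by choosing $\theta$ so that $(t^{1/s_1}-\tau)^{s_1}\lambda(\zeta)$ falls into the decay sector of $e_{s_1,\beta}$). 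This yields absolute and uniform convergence, and hence $u\in\Oo_{1,1/\kappa}(D^2)$.

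\textbf{Exponential extension to $\CC\times D$.} Under the stronger hypothesis $s_1>qs_2$ and the bound $|g(\tau^{s_1},w)|\lesssim\exp(B|\tau|^{s_1/(s_1-qs_2)})$ supplied by $g\in\Oo_{1,1/\kappa}^{1/(s_1-qs_2)}(\CC\times D)$, the $\tau$-contour can be deformed to a suitable ray in $\CC$, and a saddle-point estimate for the $\zeta$-integral balances the $|\zeta|^{q/s_1}$-growth from $e_{s_1,\beta}$ against the $|\zeta|^{1/s_2}$-decay from $e_{\Gamma_{s_2}}$. The critical scale $|\zeta|\sim|t^{1/s_1}-\tau|^{s_1 s_2/(s_1-qs_2)}$ produces $|k(t,\tau,z,w)|\lesssim\exp(C|t^{1/s_1}-\tau|^{s_1/(s_1-qs_2)})$; combined with the bound on $g$ and an elementary convexity inequality in the $\tau$-integral, this gives $|u(t,z)|\lesssim\exp(C'|t|^{1/(s_1-qs_2)})$, as required. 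The principal technical obstacle will be precisely this saddle-point bookkeeping, together with the convergence analysis in the critical case $qs_2=s_1$, where growth and decay rates coincide and the choice of the direction $\theta$ must be made carefully.
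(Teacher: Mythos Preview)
Your derivation of the kernel and the convergence analysis match the paper's argument essentially line for line; the paper also invokes Theorem~\ref{th:gevrey} up front to conclude $u\in\Oo_{1,1/\kappa}(D^2)$ before verifying that the integral converges and represents $u$, but this is only a cosmetic reordering of your reasoning.

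Where you diverge is the \emph{exponential extension to $\CC\times D$}. You propose to push the integral representation itself: deform the $\tau$-contour, run a saddle-point estimate on the $\zeta$-integral balancing $|\zeta|^{q/s_1}$ against $|\zeta|^{1/s_2}$, and read off the growth order $1/(s_1-qs_2)$. That is plausible (your critical scale and resulting exponent are correct), but it is exactly the ``saddle-point bookkeeping'' you flag as the main obstacle. The paper sidesteps all of this with a one-line Gevrey-order argument: by Remark~\ref{re:entire}, $g\in\Oo_{1,1/\kappa}^{1/(s_1-qs_2)}(\CC\times D)$ is equivalent to $\widehat{g}\in\CC[[t,z^{1/\kappa}]]_{qs_2-s_1,0}$; feeding $\tilde{s}_1=qs_2-s_1$, $\tilde{s}_2=0$ into Theorem~\ref{th:gevrey} gives $Q=\max\{q s_2-s_1,\,qs_2-s_1\}=qs_2-s_1$, so $\widehat{u}$ has the same negative Gevrey order, and Remark~\ref{re:entire} translates this back to $u\in\Oo_{1,1/\kappa}^{1/(s_1-qs_2)}(\CC\times D)$. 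This buys you the conclusion with no contour deformation and no asymptotic analysis at all, so you should use it instead of the saddle-point route.
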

   \begin{proof}
    Since $g\in\Oo_{1,1/\kappa}(D^2)$ and $s_1\geq qs_2$, by Theorem \ref{th:gevrey} we get $u\in\Oo_{1,1/\kappa}(D^2)$.
    To show that the integral given in the definition of
    $k(t,\tau,z,w)$
    is convergent, observe that by Lemma \ref{le:e}, Definitions \ref{df:moment} and \ref{df:small} there exist constants $A_i$ and $b_i$ ($i=1,2,3$)
 such that
\begin{itemize}
\item  $|e_{s_1,\beta}((t^{\frac{1}{s_1}}-\tau)^{s_1}\lambda(\zeta))|\leq A_1 e^{b_1|t^{\frac{1}{s_1}}-\tau||\zeta|^{q/s_1}}$,
\item $|E_{\Gamma_{\frac{s_2}{\kappa}}}(\zeta^{\frac{1}{\kappa}} z^{\frac{1}{\kappa}})|\leq A_2 e^{b_2|\zeta|^{1/s_2}|z|^{1/s_2}}$,
\item $|e_{\Gamma_{s_2}}(\zeta w)|\leq A_3 e^{-b_3|\zeta|^{1/s_2}|w|^{1/s_2}}$.
\end{itemize}
Hence, for fixed $w\in\CC\setminus\{0\}$
 such that $|z|$ is small relative to $|w|$ and for $|t|<a|w|^{q}$ with some fixed $a>0$ and for $\tau\in[0,t^{\frac{1}{s_1}}]$, we have
 $$
|k(t,\tau,z,w)|
 \leq
    \int_{r_0}^{\infty}\tilde{A}e^{-\tilde{b}x^{1/s_2}|w|^{1/s_2}}\,dx<
    \infty.
 $$
 It means that the integral representation of the solution $u$ is a well-defined
 holomorphic function in some complex neighbourhood of the origin. To show that this integral representation holds, observe that
 by (\ref{eq:lambda}), (\ref{eq:formal}) and (\ref{eq:integral}) we have
    \begin{eqnarray*}
     u(t,z)&=&\sum_{j=\beta-1}^{\infty}{j \choose \beta -1}
    (\partial_{\Gamma_{s_1},t}^{-1})^{j+1}\lambda^{j-\beta+1}(\partial_{\Gamma_{s_2},z})g(t,z)\\
    &=&-\sum_{j=\beta-1}^{\infty}{j \choose \beta -1}
    \int_0^{t^{\frac{1}{s_1}}}\lambda^{j-\beta+1}(\partial_{\Gamma_{s_2},z})g(\tau^{s_1},z)
    \partial_{\tau}\frac{(t^{\frac{1}{s_1}}-\tau)^{(j+1)s_1}}{\Gamma_{s_1}(j+1)}\,d\tau\\
    &=&\frac{-1}{2\kappa\pi i} \int_0^{t^{\frac{1}{s_1}}}\oint_{|w|=\varepsilon}^{\kappa} g(\tau^{s_1},w)\partial_{\tau}
     \int_{r_0e^{i\theta}}^{\infty(\theta)}\lambda^{-\beta}(\zeta)\times\\
     &&\times\sum_{j=\beta-1}^{\infty}{j \choose \beta -1}\frac{(t^{\frac{1}{s_1}}-\tau)^{s_1(j+1)}\lambda^{j+1}(\zeta)}{\Gamma_{s_1}(j+1)}
     E_{\Gamma_{\frac{s_2}{\kappa}}}(\zeta^{\frac{1}{\kappa}} z^{\frac{1}{\kappa}})
     \frac{e_{\Gamma_{s_2}}(\zeta w)}{\zeta w}\,d\zeta\,dw\,d\tau\\
     &=&\frac{-1}{2\kappa\pi i} \int_0^{t^{\frac{1}{s_1}}}\oint_{|w|=\varepsilon}^{\kappa} g(\tau^{s_1},w)\partial_{\tau}k(t,\tau,z,w)
  \,dw\,d\tau.
    \end{eqnarray*}
    The second part of the proposition is given by Theorem \ref{th:gevrey} and by the observation that
    $g,u\in\Oo_{1,1/\kappa}^{\frac{1}{s_1-qs_2}}(\CC\times D)$ if and only if $\widehat{g},\widehat{u}\in\CC[[t,z^{\frac{1}{\kappa}}]]_{qs_2-s_1,0}$.
   \end{proof}

   Using the above integral representation of solution of (\ref{eq:gamma}) we conclude that
   \begin{Th}
    \label{th:2}
    Let $m_1$, $m_2$ be moment functions of orders $s_1,s_2>0$ respectively and
    let $\lambda(\zeta)\sim\lambda_0\zeta^q$ be an analytic
    function of the variable $\xi=\zeta^{1/\kappa}$ for $|\zeta|\geq r_0$. We also assume that
    $s_1 = qs_2$, $K>0$, $d\in\RR$ and $u$ is a solution of
    \begin{equation*}
     \left\{
     \begin{array}{l}
     (\partial_{m_1,t}-\lambda(\partial_{m_2,z}))^{\beta} u=g\in\Oo_{1,1/\kappa}(D^2)\\
      \partial_{m_1,t}^j u(0,z)=0\ \textrm{for}\ j=0,\dots,\beta-1.
     \end{array}
     \right.
    \end{equation*}
    Then 
    $u(t,z)\in\Oo^{K,qK}_{1,1/\kappa}(\hat{S}_d\times\hat{S}_{(d+\arg\lambda_0+2k\pi)/q})$ for $k\in\NN$ if and only if
    $g(t,z)\in\Oo^{K,qK}_{1,1/\kappa}(\hat{S}_d\times\hat{S}_{(d+\arg\lambda_0+2k\pi)/q})$ for $k\in\NN$.
   \end{Th}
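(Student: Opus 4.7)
The strategy is to reduce to canonical moment functions and then exploit the integral representation of Proposition \ref{pr:integral}, with the extra compatibility condition $s_1=qs_2$ ensuring the exponential orders in the kernel balance correctly.

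\textbf{Step 1 (Reduction to $m_i=\Gamma_{s_i}$).} Put $\overline{m}_i:=\Gamma_{s_i}/m_i$, which are moment functions of order $0$. By Propositions \ref{pr:commutation} and \ref{pr:commutation2}, applying $\Bo_{\overline{m}_1,t}\Bo_{\overline{m}_2,z^{1/\kappa}}$ to the equation transforms the operators $\partial_{m_i,\cdot}$ and $\lambda(\partial_{m_2,z})$ into $\partial_{\Gamma_{s_i},\cdot}$ and $\lambda(\partial_{\Gamma_{s_2},z})$, while by Proposition \ref{pr:sum} the class $\Oo^{K,qK}_{1,1/\kappa}(\hat{S}_d\times\hat{S}_{d'})$ is preserved simultaneously for $u$ and $g$. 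Thus we may assume the canonical equation (\ref{eq:gamma}).

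\textbf{Step 2 ($g\Rightarrow u$).} Assume $g\in\Oo^{K,qK}_{1,1/\kappa}(\hat{S}_d\times\hat{S}_{d'})$ with $d'=(d+\arg\lambda_0+2k\pi)/q$. I would start from
\[
u(t,z)=\frac{-1}{2\kappa\pi i}\int_0^{t^{1/s_1}}\oint_{|w|=\varepsilon}^{\kappa} g(\tau^{s_1},w)\,\partial_\tau k(t,\tau,z,w)\,dw\,d\tau,
\]
and deform the three contours: the $\tau$-path from $0$ to $t^{1/s_1}$ along direction $d/s_1$; the closed $w$-path from $|w|=\varepsilon$ into the sector where $g(\tau^{s_1},\cdot)$ extends (arc of direction $d'$, wrapped $\kappa$ times); and the inner $\zeta$-ray chosen via $\theta\in(-\arg w-\pi s_2/2,-\arg w+\pi s_2/2)$ so that $e_{\Gamma_{s_2}}(\zeta w)$ decays exponentially. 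The bounds from Lemma \ref{le:e} and Definitions \ref{df:moment}, \ref{df:small} give
\[
|e_{s_1,\beta}((t^{1/s_1}-\tau)^{s_1}\lambda(\zeta))|\le A_1 e^{b_1|t^{1/s_1}-\tau|\,|\zeta|^{q/s_1}},\quad |E_{\Gamma_{s_2/\kappa}}(\zeta^{1/\kappa}z^{1/\kappa})|\le A_2 e^{b_2|\zeta z|^{1/s_2}},
\]
\[
|e_{\Gamma_{s_2}}(\zeta w)|\le A_3 e^{-b_3|\zeta w|^{1/s_2}},\qquad |g(\tau^{s_1},w)|\le A\exp(B_1|\tau|^{s_1K}+B_2|w|^{qK}).
\]
Under the hypothesis $s_1=qs_2$ one has $q/s_1=1/s_2$, so all three $\zeta$-exponents are of order $1/s_2$; choosing $|w|$ proportional to $\max(|t|^{1/q},|z|)$ absorbs the growth from the first two factors into the decay of $e_{\Gamma_{s_2}}(\zeta w)$, and then integrating in $\tau$ yields $|u(t,z)|\le A'\exp(B_1'|t|^K+B_2'|z|^{qK})$, which is the required bound on $\hat{S}_d\times\hat{S}_{d'}$.

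\textbf{Step 3 ($u\Rightarrow g$).} Here $g=(\partial_{\Gamma_{s_1},t}-\lambda(\partial_{\Gamma_{s_2},z}))^\beta u$, so it suffices to show that both $\partial_{\Gamma_{s_1},t}$ and $\lambda(\partial_{\Gamma_{s_2},z})$ preserve $\Oo^{K,qK}_{1,1/\kappa}(\hat{S}_d\times\hat{S}_{d'})$. For $\partial_{\Gamma_{s_1},t}$ this follows from the usual Cauchy-type integral representation in $t$, which does not worsen the exponential order. For the pseudodifferential operator I would use the defining formula (\ref{eq:lambda}) and a contour-deformation argument analogous to Step 2: the same balance $q/s_1=1/s_2$ allows one to choose the $w$- and $\zeta$-paths so that the output inherits the growth $\Oo^{K,qK}_{1,1/\kappa}(\hat{S}_d\times\hat{S}_{d'})$ from the input.

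\textbf{Main obstacle.} The principal difficulty is Step 2: one must simultaneously deform three contours so that the paths stay within the analyticity domains (of $g$ in $(t,z)$, and of the Mittag-Leffler/kernel factors in $\zeta$), and so that the combined exponential factors yield an integrable integrand of the prescribed $(K,qK)$ order. The equality $s_1=qs_2$ is exactly what makes the three contributions of order $1/s_2$ in $|\zeta|$ cancel out rather than accumulate, and the shift $\arg\lambda_0+2k\pi$ in the $z$-direction reflects the $q$-fold branching created by $\lambda(\zeta)\sim\lambda_0\zeta^q$ and the choice of branch of $\zeta^{1/\kappa}$ in Definition \ref{df:pseudo_1}.
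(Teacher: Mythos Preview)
Your proposal is correct and follows essentially the same three-step architecture as the paper: reduce to $m_i=\Gamma_{s_i}$ via the order-zero Borel transforms (Proposition \ref{pr:sum}, Propositions \ref{pr:commutation}--\ref{pr:commutation2}); handle $g\Rightarrow u$ by deforming contours in the integral representation of Proposition \ref{pr:integral}; handle $u\Rightarrow g$ by observing that $g=(\partial_{\Gamma_{s_1},t}-\lambda(\partial_{\Gamma_{s_2},z}))^{\beta}u$.

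Two minor comparative remarks. First, the paper deforms only the $w$-contour and explicitly invokes \cite[Lemma 4]{Mic8} for the estimate, whereas you sketch the triple deformation $(\tau,w,\zeta)$ yourself; the $\tau$-path is in fact forced by $t\in\hat S_d$ (it runs to $t^{1/s_1}$) and the $\zeta$-ray is forced by the constraint $\theta\in(-\arg w-\frac{\pi s_2}{2},-\arg w+\frac{\pi s_2}{2})$, so the only genuine deformation is of $w$, as in the paper. Second, for $u\Rightarrow g$ the paper asserts in one line that the operator preserves the class, while you unpack the argument into separate treatments of $\partial_{\Gamma_{s_1},t}$ and $\lambda(\partial_{\Gamma_{s_2},z})$; your justification for the latter (using formula (\ref{eq:lambda}) with contour deformation) is exactly what underlies that one-line claim, so your version is simply more explicit rather than different in substance.
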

   \begin{proof}
    \par\noindent
    Let $U:=\Bo_{\overline{m}_1,t}\Bo_{\overline{m}_2,z^{1/\kappa}}u$ and
    $G:=\Bo_{\overline{m}_1,t}\Bo_{\overline{m}_2,z^{1/\kappa}}g$, where $\overline{m}_1$ and $\overline{m}_2$
    are moment functions of order $0$ defined by $\overline{m}_1(u):=\Gamma_{s_1}(u)/m_1(u)$ and
    $\overline{m}_2(u):=\Gamma_{s_2}(u)/m_2(u)$. By Propositions \ref{pr:commutation} and \ref{pr:commutation2},
    we conclude that
    $U$ is a solution of
    \begin{equation*}
     \left\{
     \begin{array}{l}
     (\partial_{\Gamma_{s_1},t}-\lambda(\partial_{\Gamma_{s_2},z}))^{\beta} U=G\in\Oo_{1,1/\kappa}(D^2)\\
      \partial_{\Gamma_{s_1},t}^j U(0,z)=0\ \textrm{for}\ j=0,\dots,\beta-1.
     \end{array}
     \right.
    \end{equation*}
    To finish the proof, by Proposition \ref{pr:sum} it is sufficient to show the equivalence
    $U(t,z)\in\Oo^{K,qK}_{1,1/\kappa}(\hat{S}_d\times\hat{S}_{(d+\arg\lambda_0+2k\pi)/q})$ for $k\in\NN$ $\Longleftrightarrow$
    $G(t,z)\in\Oo^{K,qK}_{1,1/\kappa}(\hat{S}_d\times\hat{S}_{(d+\arg\lambda_0+2k\pi)/q})$ for $k\in\NN$. 
    
    $(\Rightarrow)$ Since $G(t,z)=(\partial_{\Gamma_{s_1},t}-\lambda(\partial_{\Gamma_{s_2},z}))U(t,z)$ then $G(t,z)$ belongs to the same
    spaces $\Oo^{K,qK}_{1,1/\kappa}(\hat{S}_d\times\hat{S}_{(d+\arg\lambda_0+2k\pi)/q})$ for $k\in\NN$ as $U(t,z)$.
    
    $(\Leftarrow)$ Deforming the path of integration with respect to $w$ in the integral representation of solution given in Proposition
    \ref{pr:integral} and repeating the proof of \cite[Lemma 4]{Mic8} we conclude the assertion.
   \end{proof}

\section{Summable solution}
   Let $\widehat{u}$ be a formal solution of 
   \begin{equation}
     \label{eq:2}
     \left\{
     \begin{array}{l}
     (\partial_{m_1,t}-\lambda(\partial_{m_2,z}))^{\beta} u=\widehat{g}\in\CC[[t,z^{\frac{1}{\kappa}}]]_{\tilde{s}_1,\tilde{s_2}}\\
      \partial_{m_1,t}^j u(0,z)=0\ \textrm{for}\ j=0,\dots,\beta-1,
     \end{array}
     \right.
    \end{equation}
    where $m_1$, $m_2$ are moment functions of orders $s_1,s_2$ respectively and
    let $\lambda(\zeta)\sim\lambda_0\zeta^q$ be an analytic
    function of the variable $\xi=\zeta^{1/\kappa}$ for $|\zeta|\geq r_0$.
    
   By the Gevrey estimates $\widehat{u}\in\CC[[t,z^{\frac{1}{\kappa}}]]_{\max\{q(s_2+\tilde{s}_2)-s_1,\tilde{s}_1\},\tilde{s}_2}$.
   In the case when the Gevrey order is positive, it is natural to ask about summability of $\widehat{u}$. 
   Our aim is a characterisation of summable solutions $\widehat{u}$ in terms of the inhomogeneity $\widehat{g}$. Using Theorem \ref{th:2} we prove 
   \begin{Th}
    \label{th:simple_sum}
    Let $d\in\RR$ and $\widehat{u}$ be a formal solution of (\ref{eq:2}) with $q>0$. We consider two cases.\bigskip
    
     I. $q(s_2+\tilde{s}_2) - s_1 \geq \tilde{s}_1$, $q(s_2+\tilde{s}_2)-s_1>0$ and $s_2+\tilde{s}_2>0$. 
 In this case we assume that
    $G(t,z)=\Bo_{\Gamma_{q(s_2+\tilde{s}_2)-s_1},t}\Bo_{\Gamma_{\tilde{s}_2},z^{1/\kappa}}\widehat{g}(t,z)$ and
    $K=\frac{1}{q(s_2+\tilde{s}_2)-s_1}$. Then the following statements hold:
    \begin{itemize}
     \item If $G\in\Oo_{1,1/\kappa}^{K,qK}(\hat{S}_d\times\hat{S}_{(d+\arg\lambda_0+2k\pi)/q})$ ($k\in\NN$)
    then $\widehat{u}$ is
    $K$-summable in a direction $d$ with respect to $t$.
    \item If $G\in\Oo_{1,1/\kappa}^{K,qK}(\hat{S}_d\times\hat{S}_{(d+\arg\lambda_0+2k\pi)/q})$ ($k\in\NN$), $s_1\leq qs_2$ and $\tilde{s}_2>0$
    then $\widehat{u}$ is $(K,\frac{1}{\tilde{s}_2})$-summable in directions $(d,(d+\arg\lambda_0+2k\pi)/q)$ ($k\in\NN$).
    \item If $s_1=qs_2$ and $\tilde{s}_2>0$ then $\widehat{u}$ is $(K,\frac{1}{\tilde{s}_2})$-summable
    in directions $(d,(d+\arg\lambda_0+2k\pi)/q)$ ($k\in\NN$) if and only if $\widehat{g}$ is $(K,\frac{1}{\tilde{s}_2})$-summable
    in the same directions.
    \end{itemize}
    Moreover, if $\tilde{s}_1\leq 0$ then we may replace the condition $G\in\Oo_{1,1/\kappa}^{K,qK}(\hat{S}_d\times\hat{S}_{(d+\arg\lambda_0+2k\pi)/q})$ ($k\in\NN$)
    by $G\in\Oo_{1,1/\kappa}^{qK}(D\times\hat{S}_{(d+\arg\lambda_0+2k\pi)/q})$ ($k\in\NN$).\bigskip
    
    II. $q(s_2+\tilde{s}_2) - s_1 \leq \tilde{s}_1$, $\tilde{s}_1>0$ and $s_1+\tilde{s}_1>0$. 
 In this case we assume that
    $G(t,z)=\Bo_{\Gamma_{\tilde{s}_1,t}}\Bo_{\Gamma_{(s_1+\tilde{s}_1)/q-s_2},z^{1/\kappa}}\widehat{g}(t,z)$ and
    $K=\frac{1}{\tilde{s}_1}$. If $G\in\Oo_{1,1/\kappa}^{K,qK}(\hat{S}_d\times\hat{S}_{(d+\arg\lambda_0+2k\pi)/q})$ ($k\in\NN$) then $\widehat{u}$
    is $K$-summable in a direction $d$ with respect to $t$.
   
    Moreover, if we additionally assume that $s_1\geq q(s_2+\tilde{s}_2)$ then $\widehat{u}$ is $K$-summable in a direction $d$ if and only if
    $\widehat{g}$ has the same property.
   \end{Th}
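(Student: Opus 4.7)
The plan is to reduce equation (\ref{eq:2}) to the analytic setting of Theorem \ref{th:2} by applying two carefully chosen moment Borel transforms, one in each variable. The transforms must be selected so that, simultaneously, (i) the transformed inhomogeneity becomes analytic at the origin and (ii) the transformed moment orders $(s_1',s_2')$ land on the balance line $s_1' = q s_2'$ where Theorem \ref{th:2} applies. Once this is achieved, Proposition \ref{pr:sum} together with Definitions \ref{df:summable} and \ref{df:summable2} allows us to translate the conclusion of Theorem \ref{th:2} into the desired summability statements for $\widehat{u}$.

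For Case I, set $Q := q(s_2+\tilde{s}_2)-s_1 = 1/K$ and define
\[
V := \Bo_{\Gamma_Q,t}\Bo_{\Gamma_{\tilde{s}_2},z^{1/\kappa}}\widehat{u},\qquad
G := \Bo_{\Gamma_Q,t}\Bo_{\Gamma_{\tilde{s}_2},z^{1/\kappa}}\widehat{g}.
\]
Propositions \ref{pr:commutation} and \ref{pr:commutation2} show that $V$ satisfies the transformed equation with moment functions $\bar{m}_1 := m_1\Gamma_Q$ and $\bar{m}_2 := m_2\Gamma_{\tilde{s}_2}$ of positive orders $q(s_2+\tilde{s}_2)$ and $s_2+\tilde{s}_2$, so the balance condition holds. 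The hypothesis $Q \geq \tilde{s}_1$ combined with $\widehat{g}\in\CC[[t,z^{1/\kappa}]]_{\tilde{s}_1,\tilde{s}_2}$ places $G$ in $\Oo_{1,1/\kappa}(D^2)$, so Theorem \ref{th:2} yields $V \in \Oo^{K,qK}_{1,1/\kappa}(\hat{S}_d\times\hat{S}_{(d+\arg\lambda_0+2k\pi)/q})$ whenever $G$ lies in the same space. Since $V$ is precisely the Borel transform appearing in the definition of summability, Proposition \ref{pr:sum} delivers $K$-summability of $\widehat{u}$ in direction $d$ with respect to $t$. When moreover $s_1\leq qs_2$ and $\tilde{s}_2>0$, one has $qK\leq 1/\tilde{s}_2$, hence $\Oo^{K,qK}\subseteq\Oo^{K,1/\tilde{s}_2}$, which upgrades the conclusion to $(K,1/\tilde{s}_2)$-summability in the stated multidirections; in the sharp case $s_1 = qs_2$ both exponential orders coincide, so Theorem \ref{th:2} becomes a two-sided criterion relating summability of $\widehat{u}$ and $\widehat{g}$. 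The weakening when $\tilde{s}_1\leq 0$ rests on the observation that $\widehat{g}$ is already convergent in $t$, so no sector in $t$ is needed on $G$.

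Case II is handled analogously with the Borel pair $\Bo_{\Gamma_{\tilde{s}_1},t}\Bo_{\Gamma_{(s_1+\tilde{s}_1)/q-s_2},z^{1/\kappa}}$; a direct check shows that the resulting moment orders $s_1+\tilde{s}_1$ and $(s_1+\tilde{s}_1)/q$ again satisfy $s_1' = q s_2'$, while the standing assumption $q(s_2+\tilde{s}_2)\leq s_1+\tilde{s}_1$ forces the transformed inhomogeneity to be analytic at the origin. Theorem \ref{th:2} then produces the $K$-summability of $\widehat{u}$ in direction $d$, with $K=1/\tilde{s}_1$. For the equivalence part the extra hypothesis $s_1\geq q(s_2+\tilde{s}_2)$ ensures that the Gevrey order in $z$ of the Borel-transformed $\widehat{g}$ is already nonpositive, so the sectorial exponential-growth condition on $G$ is genuinely equivalent to $K$-summability of $\widehat{g}$ itself, which lets us reverse the implication. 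The main obstacle is purely algebraic: identifying, in each case, the essentially unique Borel pair that simultaneously tames the inhomogeneity and puts the equation on the balance line of Theorem \ref{th:2}. Once this is done, what remains is bookkeeping with the elementary inclusions $\Oo^{K_1,K_2}\subseteq\Oo^{K_1,K_2'}$ for $K_2\leq K_2'$ and matching them against Definitions \ref{df:summable} and \ref{df:summable2}.
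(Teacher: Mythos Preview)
Your approach coincides with the paper's: apply the Borel pair that places the new moment orders on the balance line $s_1'=qs_2'$, invoke Theorem~\ref{th:2}, and translate back via Definitions~\ref{df:summable}, \ref{df:summable2} and Proposition~\ref{pr:sum}. Two spots are looser than they should be, however.

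For the ``Moreover'' clause in Case~I you say that $\widehat{g}$ is convergent in $t$ when $\tilde{s}_1\leq 0$ and that therefore ``no sector in $t$ is needed on $G$''. Mere convergence does not control the exponential growth of $G$ on $\hat{S}_d$. The actual point is quantitative: $G$ has Gevrey order $\tilde{s}_1-Q$ in $t$ (with $Q=1/K$), so by Remark~\ref{re:entire} it is entire in $t$ of exponential order $\frac{1}{Q-\tilde{s}_1}$; the hypothesis $\tilde{s}_1\leq 0$ is precisely what makes this order $\leq K$, whence $G\in\Oo^{K}_{1,1/\kappa}(\hat{S}_d\times D)$ automatically. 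The same kind of slip occurs in Case~II: the extra hypothesis $s_1\geq q(s_2+\tilde{s}_2)$ is not needed to make the $z$-Gevrey order of $G$ nonpositive (the standing assumption of Case~II already does that), but to ensure that the resulting exponential order $\frac{q}{s_1+\tilde{s}_1-q(s_2+\tilde{s}_2)}$ in $z$ is bounded by $qK$.

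Finally, for the reverse implication in the equivalence of Case~II you still need to say why $\widehat{u}$ $K$-summable forces $\widehat{g}$ $K$-summable; Theorem~\ref{th:2} only equates the two-variable growth conditions on $U$ and $G$, not one-variable $K$-summability. The paper closes this with the trivial observation $\widehat{g}=(\partial_{m_1,t}-\lambda(\partial_{m_2,z}))^{\beta}\widehat{u}$, which you should state explicitly.
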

   \begin{proof} I. We assume that $q(s_2+\tilde{s}_2) - s_1 \geq \tilde{s}_1$, $q(s_2+\tilde{s}_2)-s_1>0$ and $s_2+\tilde{s}_2>0$. 
   Applying the moment Borel transforms
   $\Bo_{\Gamma_{q(s_2+\tilde{s}_2) - s_1},t}$ and $\Bo_{\Gamma_{\tilde{s}_2},z^{1/\kappa}}$ to 
   (\ref{eq:2}) we obtain
   \begin{equation*}
     \left\{
     \begin{array}{l}
     (\partial_{m_1\Gamma_{q(s_2+\tilde{s}_2)-s_1},t}-\lambda(\partial_{m_2\Gamma_{\tilde{s}_2},z}))^{\beta}U=G\in\Oo_{1,1/\kappa}(D^2),\\
     \partial_{m_1\Gamma_{q(s_2+\tilde{s}_2)-s_1},t}^j U(0,z)=0\ \textrm{for}\ j=0,\dots,\beta-1,
     \end{array}
     \right.
   \end{equation*}
   where $U(t,z):=\Bo_{\Gamma_{q(s_2+\tilde{s}_2)-s_1},t}\Bo_{\Gamma_{\tilde{s}_2},z^{1/\kappa}}\widehat{u}(t,z)$ and
   \linebreak
   $G(t,z):=\Bo_{\Gamma_{q(s_2+\tilde{s}_2)-s_1},t}\Bo_{\Gamma_{\tilde{s}_2},z^{1/\kappa}}\widehat{g}(t,z)$.
   \par
   If $G\in\Oo_{1,1/\kappa}^{K,qK}(\hat{S}_d\times\hat{S}_{(d+\arg\lambda_0+2k\pi)/q})$ ($k\in\NN$), then by Theorem \ref{th:2}
   we conclude that also $U\in\Oo_{1,1/\kappa}^{K,qK}(\hat{S}_d\times\hat{S}_{(d+\arg\lambda_0+2k\pi)/q})$ ($k\in\NN$). Hence
   by Definition \ref{df:summable}, Remark \ref{re:summable} and Proposition \ref{pr:sum},
   $\widehat{u}\in G_{\tilde{s}_2,1/\kappa}[[t]]$ is $K$-summable in a direction $d$.
   \par
   If we additionally assume that $s_1\leq qs_2$ and $\tilde{s}_2>0$ then $qK\leq \frac{1}{\tilde{s}_2}$ and consequently
   $U\in\Oo_{1,1/\kappa}^{K,\frac{1}{\tilde{s}_2}}(\hat{S}_d\times\hat{S}_{(d+\arg\lambda_0+2k\pi)/q})$ ($k\in\NN$).
   It means by Definition \ref{df:summable2} and Proposition \ref{pr:sum} that $\widehat{u}\in\CC[[t,z^{\frac{1}{\kappa}}]]$
   is $(K,\frac{1}{\tilde{s}_2})$-summable in directions $(d,(d+\arg\lambda_0+2k\pi)/q)$ ($k\in\NN$).
   \par
   If $qK=\frac{1}{\tilde{s}_2}$ then by Theorem \ref{th:2} we conclude that
   $U(t,z)\in\Oo_{1,1/\kappa}^{K,qK}(\hat{S}_d\times\hat{S}_{(d+\arg\lambda_0+2k\pi)/q})$ ($k\in\NN$) if and only if
   $G(t,z)\in\Oo_{1,1/\kappa}^{K,qK}(\hat{S}_d\times\hat{S}_{(d+\arg\lambda_0+2k\pi)/q})$ ($k\in\NN$). By
   Definition \ref{df:summable2} and Proposition \ref{pr:sum} it means that
   $\widehat{u}\in\CC[[t,z^{\frac{1}{\kappa}}]]$ is $(K,\frac{1}{\tilde{s}_2})$-summable
    in directions $(d,(d+\arg\lambda_0+2k\pi)/q)$ ($k\in\NN$) if and only if
    $\widehat{g}\in\CC[[t,z^{\frac{1}{\kappa}}]]$ is $(K,\frac{1}{\tilde{s}_2})$-summable
    in the same directions.
    \par
    Finally, observe that $\widehat{G}\in\CC[[t,z^{\frac{1}{\kappa}}]]_{s_1+\tilde{s}_1-q(s_2+\tilde{s}_2),0}$, it means by Remark \ref{re:entire}
    that 
    $G\in\Oo_{1,1/\kappa}^{\frac{1}{q(s_2+\tilde{s}_2)-s_1-\tilde{s}_1}}(\CC\times D)$ for $q(s_2+\tilde{s}_2)>\tilde{s}_1+s_1$.
    Hence, if $\tilde{s}_1\leq 0$ then $\frac{1}{q(s_2+\tilde{s}_2)-s_1-\tilde{s}_1}\leq K$ and consequently
    $G\in\Oo_{1,1/\kappa}^{K}(\hat{S}_d\times D)$. So, in this case the conditions
    $G\in\Oo_{1,1/\kappa}^{qK}(D\times\hat{S}_{(d+\arg\lambda_0+2k\pi)/q})$ ($k\in\NN$) and
    $G\in\Oo_{1,1/\kappa}^{K,qK}(\hat{S}_d\times\hat{S}_{(d+\arg\lambda_0+2k\pi)/q})$ ($k\in\NN$) are equivalent
    \par
    II. Now we assume that $q(s_2+\tilde{s}_2) - s_1 \leq \tilde{s}_1$, $\tilde{s}_1>0$ and $s_1+\tilde{s}_1>0$.
    As in the previous case we obtain that $\widehat{u}$ is $K$-summable in a direction $d$ with respect to $t$
    under condition that $G\in\Oo_{1,1/\kappa}^{K,qK}(\hat{S}_d\times\hat{S}_{(d+\arg\lambda_0+2k\pi)/q})$ ($k\in\NN$). 
   
    Similarly to the first part of the proof, observe that in our case
    $\widehat{G}\in\CC[[t,z^{\frac{1}{\kappa}}]]_{0,s_2+\tilde{s}_2-(s_1+\tilde{s}_1)/q}$ and consequently
    $G\in\Oo_{1,1/\kappa}^{\frac{q}{s_1+\tilde{s}_1-q(s_2+\tilde{s}_2)}}(D\times \CC)$ for $q(s_2+\tilde{s}_2)<\tilde{s}_1+s_1$.
    So, if $s_1\geq q(s_2+\tilde{s}_2)$
    then also $G\in\Oo_{1,1/\kappa}^{qK}(D\times\hat{S}_{(d+\arg\lambda_0+2k\pi)/q})$ ($k\in\NN$). It means that if
    $\widehat{g}$ is $K$-summable in a direction $d$ with respect to $t$, or equivalently
    $G\in\Oo_{1,1/\kappa}^{K}(\hat{S}_d\times D)$, then $G\in\Oo_{1,1/\kappa}^{K,qK}(\hat{S}_d\times\hat{S}_{(d+\arg\lambda_0+2k\pi)/q})$ ($k\in\NN$)
    and consequently also $\widehat{u}$ is $K$-summable in a direction $d$ with respect to $t$. Conversely, if
    $\widehat{u}$ is $K$-summable in a direction $d$ with respect to $t$ then of course also $\widehat{g}=(\partial_{m_1,t}-\lambda(\partial_{m_2,z}))^{\beta}\widehat{u}$
    has the same properties.
   \end{proof}
  \begin{Ex}
   To illustrate the above theorem, let us discuss the summability of the formal solution $\widehat{u}$ of the Cauchy problem
     \begin{equation*}
     \left\{
     \begin{array}{l}
     (\partial_t-\partial^q_z)u=\widehat{g}\in\Oo[[t]]_{s},\\
     \partial_t u(0,z)=0,
     \end{array}
     \right.
   \end{equation*}
   where $q\in\NN$ and $s\in\RR$. Observe that $\widehat{u}$ satisfies
   (\ref{eq:2}) with $m_1=m_2=\Gamma_1$, $s_1=s_2=1$, $\lambda(\zeta)=\zeta^q$, $\beta=1$, $\tilde{s}_1=s$ and $\tilde{s}_2=0$.
   By Theorem \ref{th:simple_sum} we conclude that
   \begin{itemize}
    \item If $q>1$, $q-1\geq s$ and $\Bo_{\Gamma_{q-1},t}\widehat{g}(t,z)\in\Oo^{\frac{1}{q-1},\frac{q}{q-1}}(S_d\times S_{(d+2k\pi)/q})$ for
    $k=0,\dots,q-1$ then $\widehat{u}$ is $\frac{1}{q-1}$-summable in a direction $d$ with respect to $t$.
    \item If $q>1$, $s\leq 0$ and $\Bo_{\Gamma_{q-1},t}\widehat{g}(t,z)\in\Oo^{\frac{q}{q-1}}(D\times S_{(d+2k\pi)/q})$ for
    $k=0,\dots,q-1$ then $\widehat{u}$ is $\frac{1}{q-1}$-summable in a direction $d$ with respect to $t$.
    \item If $q-1\leq s$, $s>0$ and $\Bo_{\Gamma_{s},t}\widehat{g}(t,z)\in\Oo^{\frac{1}{s},\frac{q}{s}}(S_d\times S_{(d+2k\pi)/q})$ for
    $k=0,\dots,q-1$ then $\widehat{u}$ is $\frac{1}{s}$-summable in a direction $d$ with respect to $t$.
    \item If $q=1$ and $s>0$ then $\widehat{u}$ is $\frac{1}{s}$-summable in a direction $d$ with respect to $t$ if and only if $\widehat{g}$
    has the same property.
   \end{itemize}
  \end{Ex} 
 \bigskip
   
  Now, let us return to the Cauchy problem (\ref{eq:inhomo0}) with the additional condition that all $\widehat{u}_{\alpha\beta}$ constructed in
  Theorem \ref{th:gevrey} have the same Gevrey order with respect to $t$. In this case we have
 \begin{Th}
 \label{th:sum}
  Let $d\in\RR$ and $\widehat{u}$ be a formal solution of (\ref{eq:inhomo0}) determined by $\widehat{g}\in\CC[[t,z]]_{\tilde{s}_1,\tilde{s}_2}$.
  We consider two cases.\bigskip
  
  I. $s_2+\tilde{s}_2>0$ and there exist $q\in\QQ_+$  being the common pole order of $\lambda_{\alpha}(\zeta)$
  (i.e. $\lambda_{\alpha}(\zeta)\sim\lambda_{\alpha}\zeta^q$) for every
  $\alpha=1,\dots,l$ and satisfying
  $q(s_2+\tilde{s}_2)\geq s_1+\tilde{s}_1$ and $q(s_2+\tilde{s}_2)> s_1$. In this case
  we assume that 
    $G(t,z)=\Bo_{\Gamma_{q(s_2+\tilde{s}_2)-s_1},t}\Bo_{\Gamma_{\tilde{s}_2},z}\widehat{g}(t,z)$ and
    $K=\frac{1}{q(s_2+\tilde{s}_2)-s_1}$. Then the following statements hold:
    \begin{itemize}
     \item If $G\in\Oo^{K,qK}(\hat{S}_d\times\hat{S}_{(d+\arg\lambda_{\alpha}+2k\pi)/q})$ (for $\alpha=1,\dots,l$ and $k\in\NN$)
    then $\widehat{u}$ is
    $K$-summable in a direction $d$ with respect to $t$.
    \item If $G\in\Oo^{K,qK}(\hat{S}_d\times\hat{S}_{(d+\arg\lambda_{\alpha}+2k\pi)/q})$ (for $\alpha=1,\dots,l$ and $k\in\NN$),
    $s_1\leq qs_2$ and $\tilde{s}_2>0$
    then $\widehat{u}$ is $(K,\frac{1}{\tilde{s}_2})$-summable in directions $(d,(d+\arg\lambda_{\alpha}+2k\pi)/q)$ (for $\alpha=1,\dots,l$ and $k\in\NN$).
    \item If $s_1=qs_2$ and $\tilde{s}_2>0$ then $\widehat{u}$ is $(K,\frac{1}{\tilde{s}_2})$-summable
    in directions $(d,(d+\arg\lambda_{\alpha}+2k\pi)/q)$ (for $\alpha=1,\dots,l$ and $k\in\NN$) if and only if $\widehat{g}$ is
    $(K,\frac{1}{\tilde{s}_2})$-summable
    in the same directions.
    \end{itemize}
     Moreover, if $\tilde{s}_1\leq 0$ then we may replace the condition $G\in\Oo^{K,qK}(\hat{S}_d\times\hat{S}_{(d+\arg\lambda_{\alpha}+2k\pi)/q})$
      by $G\in\Oo^{qK}(D\times\hat{S}_{(d+\arg\lambda_{\alpha}+2k\pi)/q})$ (for $\alpha=1,\dots,l$ and $k\in\NN$).\bigskip
      
  II. $\tilde{s}_1>0$, $s_1+\tilde{s}_1>0$ and $q_{\alpha}(s_2+\tilde{s}_2)-s_1\leq\tilde{s}_1$ for $\alpha=1,\dots,l$, where $q_{\alpha}\in\QQ_+$ is
  a pole order of $\lambda_{\alpha}(\zeta)$ (i.e. $\lambda_{\alpha}(\zeta)\sim\lambda_{\alpha}\zeta^{q_{\alpha}}$) for every $\alpha=1,\dots,l$.
  In this case
  we assume that 
    $G_{\alpha}(t,z)=\Bo_{\Gamma_{\tilde{s}_1},t}\Bo_{\Gamma_{(s_1+\tilde{s}_1)/q_{\alpha}-s_2},z}\widehat{g}(t,z)$ and
    $K=\frac{1}{\tilde{s}_1}$. 
  If $G_{\alpha}\in\Oo^{K,q_{\alpha}K}(\hat{S}_d\times\hat{S}_{(d+\arg\lambda_{\alpha}+2k\pi)/q})$ (for $\alpha=1,\dots,l$ and $k\in\NN$)
    then $\widehat{u}$ is
    $K$-summable in a direction $d$ with respect to $t$.
   
     Moreover, if we additionally assume that $s_1\geq q_{\alpha}(s_2+\tilde{s}_2)$ (for $\alpha=1,\dots,l$)
     then $\widehat{u}$ is $K$-summable in a direction $d$ if and only if
    $\widehat{g}$ has the same property.
  \end{Th}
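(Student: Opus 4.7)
The plan is to reduce Theorem \ref{th:sum} to Theorem \ref{th:simple_sum} via the component-wise decomposition supplied by Theorem \ref{th:gevrey}. Any formal solution $\widehat{u}$ of (\ref{eq:inhomo0}) determined by $\widehat{g}$ splits as $\widehat{u}=\sum_{\alpha=1}^l\sum_{\beta=1}^{n_\alpha}\widehat{u}_{\alpha\beta}$, where $\widehat{u}_{\alpha\beta}$ solves the simple equation (\ref{eq:gevrey}) with inhomogeneity $\widehat{g}_{\alpha\beta}=d_{\alpha\beta}(\partial_{m_2,z})\widehat{g}$ and $d_{\alpha\beta}(\zeta)$ is analytic in $\xi=\zeta^{1/\kappa}$ of polynomial growth at infinity. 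Since any summability type in a fixed (multi)direction is preserved by finite sums, it suffices to establish each conclusion component-wise on $\widehat{u}_{\alpha\beta}$ and then reassemble.

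For Case~I, I would define $G_{\alpha\beta}:=\Bo_{\Gamma_{q(s_2+\tilde{s}_2)-s_1},t}\Bo_{\Gamma_{\tilde{s}_2},z^{1/\kappa}}\widehat{g}_{\alpha\beta}$. Commuting the $t$-Borel transform trivially with $d_{\alpha\beta}(\partial_{m_2,z})$ and commuting the $z$-Borel transform past it via Proposition \ref{pr:commutation2} (which replaces $m_2$ by $m_2\Gamma_{\tilde{s}_2}$), one arrives at
\begin{equation*}
G_{\alpha\beta}(t,z)=d_{\alpha\beta}\bigl(\partial_{m_2\Gamma_{\tilde{s}_2},z}\bigr)G(t,z).
\end{equation*}
The moment function $m_2\Gamma_{\tilde{s}_2}$ has positive order $s_2+\tilde{s}_2$ and $d_{\alpha\beta}$ has polynomial growth, so by a contour deformation of the $\zeta$-ray in the integral representation (\ref{eq:lambda}) within the admissible angular range dictated by the sector $\hat{S}_{(d+\arg\lambda_\alpha+2k\pi)/q}$ (mirroring the argument in the proof of Theorem \ref{th:2}), the operator $d_{\alpha\beta}(\partial_{m_2\Gamma_{\tilde{s}_2},z})$ maps $\Oo^{K,qK}(\hat{S}_d\times\hat{S}_{(d+\arg\lambda_\alpha+2k\pi)/q})$ into itself. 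Consequently the hypothesis on $G$ transfers to each $G_{\alpha\beta}$, and Theorem \ref{th:simple_sum} (Case~I) applied to each simple equation yields the three bullet points (and the ``moreover'' reduction under $\tilde{s}_1\le 0$). For the converse direction in the third bullet, one uses instead that $\widehat{g}=\widetilde{P}(\partial_{m_1,t},\partial_{m_2,z})\widehat{u}$, noting that the polynomial-growth moment pseudodifferential factors of $\widetilde{P}$ preserve $(K,1/\tilde{s}_2)$-summability in the prescribed multidirections.

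Case~II is handled identically, now with distinct pole orders $q_\alpha$: for each $\alpha$ I would define $G_{\alpha,\beta}$ as the $\alpha$-dependent analogue (with $\widehat{g}_{\alpha\beta}$ in place of $\widehat{g}$ in the formula defining $G_\alpha$), express it via Proposition \ref{pr:commutation2} as a polynomial-growth moment pseudodifferential operator applied to $G_\alpha$, and invoke Theorem \ref{th:simple_sum} (Case~II). Because the summability index $K=1/\tilde{s}_1$ does not depend on $\alpha$, the individual $K$-summabilities in direction $d$ add up to the $K$-summability of $\widehat{u}$. The converse under $s_1\ge q_\alpha(s_2+\tilde{s}_2)$ follows once more from $\widehat{g}=\widetilde{P}(\partial_{m_1,t},\partial_{m_2,z})\widehat{u}$. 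The main obstacle throughout is the transfer step: showing that polynomial-growth moment pseudodifferential operators in $z$ preserve the bidirectional exponential-growth spaces on the relevant products of disc-sectors. I expect this to be a direct extension of the contour-deformation arguments already carried out for Theorem \ref{th:2}, but it is where the real technical work of the proof is concentrated.
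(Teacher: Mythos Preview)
Your proposal is correct and follows essentially the same route as the paper's proof: decompose $\widehat{u}$ via Theorem~\ref{th:gevrey}, use Proposition~\ref{pr:commutation2} to write $G_{\alpha\beta}=d_{\alpha\beta}(\partial_{m_2\Gamma_{\tilde{s}_2},z})G$, transfer the hypothesis on $G$ to each $G_{\alpha\beta}$, apply Theorem~\ref{th:simple_sum} component-wise, and handle the converses via $\widehat{g}=\widetilde{P}(\partial_{m_1,t},\partial_{m_2,z})\widehat{u}$. You are in fact more explicit than the paper about the transfer step (that polynomial-growth moment pseudodifferential operators in $z$ preserve the spaces $\Oo^{K,qK}$ on products of disc-sectors), which the paper asserts without comment; your pointer to the contour-deformation argument behind Theorem~\ref{th:2} is the right justification.
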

  \begin{proof}
  By Theorem \ref{th:gevrey} we conclude that $\widehat{u}=\sum_{\alpha=1}^l\sum_{\beta=1}^{n_{\alpha}}\widehat{u}_{\alpha\beta}$ with
   $\widehat{u}_{\alpha\beta}$ being a formal solution of simple inhomogeneous pseudodifferential equation
   \begin{equation*}
    \left\{
    \begin{array}{l}
     (\partial_{m_1,t}-\lambda_{\alpha}(\partial_{m_2,z}))^{\beta} u_{\alpha\beta}=\widehat{g}_{\alpha\beta}\\
     \partial_{m_1,t}^j u_{\alpha\beta}(0,z)=0\ \textrm{for}\ j=0,\dots,\beta-1
    \end{array}
    \right.
   \end{equation*}
   where $\widehat{g}_{\alpha\beta}(t,z):=d_{\alpha\beta}(\partial_{m_2,z})\widehat{g}(t,z)\in
   \CC[[t,z^{\frac{1}{\kappa}}]]_{\tilde{s}_1,\tilde{s}_2}$ and $d_{\alpha\beta}(\zeta)$ is a holomorphic function
   of the variable $\xi=\zeta^{\frac{1}{\kappa}}$ and of polynomial growth.
  
   In the first case, if $G_{\alpha\beta}:=\Bo_{\Gamma_{q(s_2+\tilde{s}_2)-s_1},t}\Bo_{\Gamma_{\tilde{s}_2},z}\widehat{g}_{\alpha\beta}$
   then by Proposition \ref{pr:commutation2}
   $$
   G_{\alpha\beta}=d_{\alpha\beta}(\partial_{m_2\Gamma_{\tilde{s}_2},z})\Bo_{\Gamma_{q(s_2+\tilde{s}_2)-s_1},t}\Bo_{\Gamma_{\tilde{s}_2},z}
   \widehat{g}=d_{\alpha\beta}(\partial_{m_2\Gamma_{\tilde{s}_2},z})G
   $$
   for $\alpha=1,\dots,l$ and $\beta=1,\dots,n_{\alpha}$. Hence 
   if $G\in\Oo^{K,qK}(\hat{S}_d\times\hat{S}_{(d+\arg\lambda_{\alpha}+2k\pi)/q})$ (for $\alpha=1,\dots,l$ and $k\in\NN$)
   then for every fixed $\alpha\in\{1,\dots,l\}$ and $\beta\in\{1,\dots,n_{\alpha}\}$ we conclude that
   $G_{\alpha\beta}\in\Oo_{1,1/\kappa}^{K,qK}(\hat{S}_d\times\hat{S}_{(d+\arg\lambda_{\alpha}+2k\pi)/q})$ (for $k\in\NN$).
   
   By Theorem \ref{th:simple_sum} we see that $\widehat{u}_{\alpha\beta}$ is $K$-summable in a~direction $d$ with respect to $t$ for every
   $\alpha\in\{1,\dots,l\}$ and $\beta\in\{1,\dots,n_{\alpha}\}$. Consequently also
   $\widehat{u}=\sum_{\alpha=1}^l\sum_{\beta=1}^{n_{\alpha}}\widehat{u}_{\alpha\beta}$ is $K$-summable in a~direction $d$ with respect to $t$.
   
   If additionally we assume that $s_1\leq qs_2$ and $\tilde{s}_2>0$ then by Theorem \ref{th:simple_sum} we conclude that
   $\widehat{u}_{\alpha\beta}$ is $(K,\frac{1}{\tilde{s}_2})$-summable in directions $(d,(d+\arg\lambda_{\tilde{\alpha}}+2k\pi)/q)$
   for every
   $\alpha,\tilde{\alpha}\in\{1,\dots,l\}$ and $\beta\in\{1,\dots,n_{\alpha})$, so $\widehat{u}$
   also is $(K,\frac{1}{\tilde{s}_2})$-summable in the same directions.
   
   Finally, we assume that $s_1=qs_2$ and $\tilde{s}_2>0$. If $\widehat{u}$ is $(K,\frac{1}{\tilde{s}_2})$-summable
    in directions $(d,(d+\arg\lambda_{\tilde{\alpha}}+2k\pi)/q)$ (for $\tilde{\alpha}=1,\dots,l$ and $k\in\NN$) then also
    $\widehat{g}=\tilde{P}(\partial_{m_1,t},\partial_{m_2,z})\widehat{u}$ is $(K,\frac{1}{\tilde{s}_2})$-summable
    in the same directions. For the converse, if $\widehat{g}$ is $(K,\frac{1}{\tilde{s}_2})$-summable
    in directions $(d,(d+\arg\lambda_{\tilde{\alpha}}+2k\pi)/q)$ (for $\tilde{\alpha}=1,\dots,l$ and $k\in\NN$) then also
    $\widehat{g}_{\alpha\beta}=d_{\alpha\beta}(\partial_{m_2,z})\widehat{g}(t,z)$ and, by Theorem \ref{th:simple_sum},
    $\widehat{u}_{\alpha\beta}$ are $(K,\frac{1}{\tilde{s}_2})$-summable
    in the same directions for every $\alpha=1,\dots,l$ and $\beta=1,\dots,n_{\alpha}$. It means that
    $\widehat{u}=\sum_{\alpha=1}^l\sum_{\beta=1}^{n_{\alpha}}\widehat{u}_{\alpha\beta}$ is $(K,\frac{1}{\tilde{s}_2})$-summable
    in the same directions, too.
    
    Using the same arguments as in the proof of Theorem \ref{th:simple_sum}, we finish the proof of the first case.
    In the same way we prove the second part of the theorem.
  \end{proof}
   
   \section{Multisummable solution} 
   Now we return to the general Cauchy problem (\ref{eq:inhomo0}) with $s_1,s_2,\tilde{s}_1,\tilde{s}_2\in\RR$ satisfying $s_1>0$, $s_2>0$ and
   $s_2+\tilde{s}_2>0$.
   For convenience we assume that
 \begin{equation}
   \label{eq:P_multi}
   P(\lambda,\zeta)=P_0(\zeta)\tilde{P}(\lambda,\zeta)=P_0(\zeta)\prod_{\alpha=1}^{\tilde{n}}\prod_{\beta=1}^{l_{\alpha}}
   (\lambda-\lambda_{\alpha\beta}(\zeta))^{n_{\alpha\beta}},
 \end{equation}
  where $\lambda_{\alpha\beta}(\zeta)\sim \lambda_{\alpha\beta}\zeta^{q_{\alpha}}$ is the root of the characteristic equation with
  $q_{\alpha}\in\QQ$
  and $\lambda_{\alpha\beta}\in\CC\setminus\{0\}$ for
  $\alpha=1,\dots,\tilde{n}$ and $\beta=1,\dots,l_{\alpha}$.
 Without loss of generality we may assume that there exist exactly $N$ positive pole orders of the roots of the characteristic
 equation, which are
 greater than $\frac{s_1}{s_2+\tilde{s}_2}$ and $\frac{s_1+\tilde{s}_1}{s_2+\tilde{s}_2}$,
 say $\max\{\frac{s_1}{s_2+\tilde{s}_2},\frac{s_1+\tilde{s}_1}{s_2+\tilde{s}_2}\} <q_N<\cdots<q_1<\infty$.
 Let $K_{\alpha}:=(q_{\alpha}(s_2+\tilde{s}_2)-s_1)^{-1}$ for $\alpha=1,\dots,N$.
  \par
 By Theorem \ref{th:gevrey}, the formal solution $\widehat{u}$ of (\ref{eq:inhomo0}) determined by
 $\widehat{g}\in\CC[[t,z]]_{\tilde{s}_1,\tilde{s}_2}$ is given by
  \begin{equation}
   \label{eq:decomposition}
   \widehat{u}=\sum_{\alpha=1}^{\tilde{n}}\sum_{\beta=1}^{l_{\alpha}}\sum_{\gamma=1}^{n_{\alpha\beta}}
   \widehat{u}_{\alpha\beta\gamma}
  \end{equation}
  with $\widehat{u}_{\alpha\beta\gamma}$ satisfying
  \begin{eqnarray*}
  \left\{
   \begin{array}{l}
    (\partial_{m_1,t}-\lambda_{\alpha\beta}(\partial_{m_2,z}))^{\gamma} u_{\alpha\beta\gamma}=\widehat{g}_{\alpha\beta\gamma}\\
    \partial_{m_1,t}^j u_{\alpha\beta\gamma}(0,z)=0\ \ \textrm{for}\ \ j=0,\dots,\gamma-1,
   \end{array}
  \right.
 \end{eqnarray*}
 where $\widehat{g}_{\alpha\beta\gamma}(t,z)=d_{\alpha\beta\gamma}(\partial_{m_2,z})\widehat{g}(t,z)\in
 \CC[[t,z^{\frac{1}{\kappa}}]]_{\tilde{s}_1,\tilde{s}_2}$
 and $d_{\alpha\beta\gamma}(\zeta)$ is a holomorphic function of the variable $\xi=\zeta^{\frac{1}{\kappa}}$ and of polynomial growth at infinity.
 Moreover $\widehat{u}_{\alpha\beta\gamma}\in\CC[[t,z^{\frac{1}{\kappa}}]]_{Q_{\alpha},\tilde{s}_2}$,
 where $Q_{\alpha}=\max\{q_{\alpha}^+(s_2+\tilde{s}_2)-s_1,\tilde{s}_1\}$ for $\alpha=1,\dots,\tilde{n}$. It means that
 $Q_{\alpha}=q_{\alpha}(s_2+\tilde{s}_2)-s_1$ for $\alpha=1,\dots,N$. If $N=\tilde{n}$
 then we may ask about $(K_N,\dots,K_1)$-summable solution $\widehat{u}$. Similarly, if we assume that $\tilde{s}_1\leq 0$ then
 $Q_{\alpha}\leq 0$ for $\alpha=N+1,\dots,\tilde{n}$, so also in this case we may try to find $(K_N,\dots,K_1)$-summable solution $\widehat{u}$.
 On the other hand if $\tilde{s}_1>0$ then $Q_{\alpha}=\tilde{s}_1$ for $\alpha=N+1,\dots,\tilde{n}$
 and we can ask about $(\tilde{K},K_N,\dots,K_1)$-summable solution $\widehat{u}$ with
 $\tilde{K}=\frac{1}{\tilde{s}_1}$.
 \par
 So, under the above conditions, using Theorem \ref{th:simple_sum} and similar arguments as in the proof of Theorem \ref{th:sum} we obtain
 \begin{Th}
  \label{th:multi1}
  \par
  Case I: $\tilde{s}_1\leq 0$ or $N=\tilde{n}$.
  \par
  We assume that $(d_N,...,d_1)\in\RR^N$ is an admissible multidirection
  with respect to $(K_N,\dots,K_1)$ and
  $G_{\alpha}(t,z):=\Bo_{\Gamma_{q_{\alpha}(s_2+\tilde{s}_2)-s_1},t}\Bo_{\Gamma_{\tilde{s}_2},z}\widehat{g}(t,z)$ satisfies
  \begin{equation*}
   G_{\alpha}(t,z)\in\Oo^{K_{\alpha},q_{\alpha}K_{\alpha}}
   (\hat{S}_{d_{\alpha}}\times\hat{S}_{(d_{\alpha}+\arg\lambda_{\alpha\beta}+2k\pi)/q_{\alpha}})
  \end{equation*}
  for every $\alpha=1,\dots,N$, $\beta=1,\dots,l_{\alpha}$ and $k\in\NN$. Then the formal solution $\widehat{u}$ of (\ref{eq:inhomo0}) determined
  by $\widehat{g}$ is $(K_N,\dots,K_1)$-summable in the multidirection $(d_N,\dots,d_1)$.
  \bigskip
  \par
  Case II: $\tilde{s}_1>0$ and $N<\tilde{n}$.
  \par
   We assume that $(\tilde{d},d_N,\dots,d_1)\in\RR^{N+1}$ is an admissible multidirection
   with respect to $(\tilde{K},K_N,\dots,K_1)$ and 
   $G_{\alpha}(t,z):=\Bo_{\Gamma_{q_{\alpha}(s_2+\tilde{s}_2)-s_1},t}\Bo_{\Gamma_{\tilde{s}_2},z}\widehat{g}(t,z)$ satisfies 
    \begin{equation*}
   G_{\alpha}(t,z)\in\Oo^{K_{\alpha},q_{\alpha}K_{\alpha}}
   (\hat{S}_{d_{\alpha}}\times\hat{S}_{(d_{\alpha}+\arg\lambda_{\alpha\beta}+2k\pi)/q_{\alpha}})
  \end{equation*}
  for every $\alpha=1,\dots,N$, $\beta=1,\dots,l_{\alpha}$ and $k\in\NN$.
  Additionally we assume that $q_{\alpha}>0$ and
   $G_0(t,z)=\Bo_{\Gamma_{\tilde{s}_1,t}}\Bo_{\Gamma_{\tilde{s}_2,z}}\widehat{g}(t,z)$
   satisfies
   \begin{equation*}
   G_0(t,z)\in
   \Oo^{\frac{1}{\tilde{s}_1},\frac{q_{\alpha}}{\tilde{s}_1}}(\hat{S}_{\tilde{d}}\times\hat{S}_{(\tilde{d}+\arg\lambda_{\alpha\beta}+2k\pi)/q_{\alpha}})
  \end{equation*}
   for every $\alpha=N+1\dots,\tilde{n}$, $\beta=1,\dots,l_{\alpha}$ and $k\in\NN$. Then the formal solution $\widehat{u}$ of (\ref{eq:inhomo0})
   determined by $\widehat{g}$
   is $(\tilde{K},K_N,\dots,K_1)$-summable in the multidirection $(\tilde{d},d_N,\dots,d_1)$.
 \end{Th}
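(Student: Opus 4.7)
The plan is to combine the decomposition from Theorem \ref{th:gevrey} with Theorem \ref{th:simple_sum} applied to each summand, using Proposition \ref{pr:commutation2} to transfer the hypotheses imposed on $G_\alpha$ (and, in Case II, $G_0$) to the individual inhomogeneities $\widehat{g}_{\alpha\beta\gamma}$. Since the multisummability of a sum follows whenever each summand is summable in the corresponding direction with the corresponding index, the argument reduces to verifying the hypotheses of Theorem \ref{th:simple_sum} componentwise.

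As a preliminary step I would observe that $\widehat{g}_{\alpha\beta\gamma}=d_{\alpha\beta\gamma}(\partial_{m_2,z})\widehat{g}$ with $d_{\alpha\beta\gamma}$ of polynomial growth, so by Proposition \ref{pr:commutation2},
\[
G_{\alpha\beta\gamma}:=\Bo_{\Gamma_{q_\alpha(s_2+\tilde{s}_2)-s_1},t}\Bo_{\Gamma_{\tilde{s}_2},z^{1/\kappa}}\widehat{g}_{\alpha\beta\gamma}=d_{\alpha\beta\gamma}(\partial_{m_2\Gamma_{\tilde{s}_2},z})\,G_\alpha,
\]
and analogously for $G_{0,\alpha\beta\gamma}$ in terms of $G_0$. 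Because a moment pseudodifferential operator of polynomial growth preserves exponential bounds of the relevant orders on sectors (the same fact invoked in the proof of Theorem \ref{th:sum}, following from the integral representation (\ref{eq:lambda}) and the estimates in the proof of Proposition \ref{pr:integral}), the exponential-growth assumption on $G_\alpha$ (respectively $G_0$) passes to $G_{\alpha\beta\gamma}$ (respectively $G_{0,\alpha\beta\gamma}$).

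For Case I, I would apply Theorem \ref{th:simple_sum}, Case I, to each equation satisfied by $\widehat{u}_{\alpha\beta\gamma}$ with $\alpha\in\{1,\dots,N\}$: by construction of $N$ the inequalities $q_\alpha(s_2+\tilde{s}_2)-s_1\geq\tilde{s}_1$ and $q_\alpha(s_2+\tilde{s}_2)-s_1>0$ hold, hence each such $\widehat{u}_{\alpha\beta\gamma}$ is $K_\alpha$-summable in the direction $d_\alpha$. If $N=\tilde{n}$ the decomposition terminates; otherwise necessarily $\tilde{s}_1\leq 0$, and for $\alpha\in\{N+1,\dots,\tilde{n}\}$ the Gevrey order $Q_\alpha=\max\{q_\alpha^+(s_2+\tilde{s}_2)-s_1,\tilde{s}_1\}$ satisfies $Q_\alpha\leq 0$, so by Remark \ref{re:entire} the corresponding $\widehat{u}_{\alpha\beta\gamma}$ is convergent and hence summable in every direction with every positive index. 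The admissibility of $(d_N,\dots,d_1)$ with respect to $(K_N,\dots,K_1)$ then yields the $(K_N,\dots,K_1)$-multisummability of $\widehat{u}$ in the multidirection $(d_N,\dots,d_1)$. For Case II, the same argument handles $\alpha\in\{1,\dots,N\}$; for $\alpha\in\{N+1,\dots,\tilde{n}\}$ one has $q_\alpha(s_2+\tilde{s}_2)-s_1\leq\tilde{s}_1$, $\tilde{s}_1>0$ and $s_1+\tilde{s}_1>0$, so Theorem \ref{th:simple_sum}, Case II, applies with $G_{0,\alpha\beta\gamma}$ and yields $\tilde{K}$-summability of these pieces in the direction $\tilde{d}$, giving multisummability with the enlarged data $(\tilde{K},K_N,\dots,K_1)$ in $(\tilde{d},d_N,\dots,d_1)$.

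The only nontrivial technical point is the transfer lemma asserting that $d_{\alpha\beta\gamma}(\partial_{m_2\Gamma_{\tilde{s}_2},z})$ preserves membership in the relevant spaces $\mathcal{O}^{K_\alpha,q_\alpha K_\alpha}(\hat{S}_{d_\alpha}\times\hat{S}_{(d_\alpha+\arg\lambda_{\alpha\beta}+2k\pi)/q_\alpha})$; this is the main bookkeeping obstacle, but it reduces to a direct estimate on the integral kernel analogous to the one used in Proposition \ref{pr:integral}, and is essentially the verification already performed in the proof of Theorem \ref{th:sum}. Once that is granted, everything else is a formal combination of Theorem \ref{th:gevrey} and Theorem \ref{th:simple_sum} with the definition of multisummability.
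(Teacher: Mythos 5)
Your proposal is correct and follows essentially the same approach as the paper, which does not write out a proof of Theorem \ref{th:multi1} at all but instead states that it follows ``using Theorem \ref{th:simple_sum} and similar arguments as in the proof of Theorem \ref{th:sum}''. You have accurately unpacked that pointer: decompose $\widehat{u}$ via Theorem \ref{th:gevrey}, use Proposition \ref{pr:commutation2} to write $G_{\alpha\beta\gamma}=d_{\alpha\beta\gamma}(\partial_{m_2\Gamma_{\tilde{s}_2},z})G_\alpha$ and transfer the exponential-growth hypotheses, apply Theorem \ref{th:simple_sum} componentwise (with the observation that the pieces with $Q_\alpha\leq 0$ are convergent and hence trivially summable), and conclude multisummability of the sum.
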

 
 \section{Newton polygon}
 In this section we introduce the notion of the Newton polygon for the moment partial differential operators with constant coefficients.
 This concept is a generalisation of the Newton polygon introduced by Yonemura \cite{Y} for partial differential operators.
 The similar considerations for the partial differential operators with constant coefficients can be found in \cite[Section 3]{B5}.
 
 We will define the Newton polygon for the operator $P(\partial_{m_1,t},\partial_{m_2,z})$, where
 $$
 P(\lambda,\zeta)=\sum_{i=0}^n\sum_{j=0}^k a_{ij}\lambda^i\zeta^j.
 $$
 To this end we denote by $\Lambda$ the set of indices
 $\Lambda:=\{(i,j)\in\NN_0^2\colon a_{ij}\neq 0\}$.
 So, we may write the polynomial $P$ as
 $$
 P(\lambda,\zeta)=\sum_{(i,j)\in\Lambda} a_{ij}\lambda^i\zeta^j.
 $$
 We have
 \begin{Df}
 The \emph{Newton polygon} for the operator $P(\partial_{m_1,t},\partial_{m_2,z})$ with $s_1,s_2>0$ is defined by the convex hull of the union of sets
 $C(is_1+js_2,-i)$ ($(i,j)\in\Lambda$), that is
 \begin{equation*}
  N(P,s_1,s_2)={\rm conv\,}\{\bigcup_{(i,j)\in\Lambda} C(is_1+js_2,-i)\},
 \end{equation*}
 where $C(a,b):=\{(x,y)\in\RR^2\colon x\leq a,\ y\geq b\}$ for any $(a,b)\in\RR^2$.
  \end{Df}
  \begin{Rem}
   Observe that if $(i,j)\in\Lambda$ then the point $(is_1+js_2,-i)$ in the Newton polygon $N(P,s_1,s_2)$ is determined by the operator
   $a_{ij}\partial_{m_1,t}^i\partial_{m_2,z}^j$.
  \end{Rem}

 \begin{Rem}
 We may extend this definition to the case of pseudodifferential operators 
 \begin{gather*}
 \tilde{P}(\lambda,\zeta)=\sum_{i=0}^n a_{i}\lambda^i\tilde{P}_i(\zeta) \quad \textrm{with} \quad  \tilde{P}_i(\zeta)\sim \zeta^{q_i} \quad \textrm{and} \quad q_i\in\QQ
 \end{gather*}
 putting
 \begin{equation*}
  N(\tilde{P},s_1,s_2)={\rm conv\,}\{\bigcup_{i\in\tilde{\Lambda}} C(is_1+q_i s_2,-i)\},
 \end{equation*}
 where $\tilde{\Lambda}=\{i\in\NN_0\colon a_i\neq 0\}$. Analogously as in the previous case, if $i\in\tilde{\Lambda}$ then the point $(is_1+q_i s_2,-i)$
 is determined by the pseudodifferential operator $a_i\partial_{m_1,t}^i\tilde{P}_i(\partial_{m_2,z})$.
 \end{Rem}

 The Newton polygon allows us to describe the multisummable solutions of (\ref{eq:inhomo0}) in case of $\tilde{s}_1=0$.
 First observe that the application of the moment Borel transform $\Bo_{\tilde{m}_2,z}$, where $\tilde{m}_2$ is a moment
 function of order $\tilde{s}_2$, gives us for any operator $P$ the formula
 \begin{equation*}
 N(\Bo_{\tilde{m}_2,z}P,s_1,s_2)=N(P,s_1,s_2+\tilde{s}_2).
 \end{equation*}
 Hence, it is sufficient to consider the solutions of (\ref{eq:inhomo0}) in case of $\tilde{s}_2=0$.

 We find the shape of the Newton polygon for the operators $P(\partial_{m_1,t},\partial_{m_2,z})$
 and $\tilde{P}(\partial_{m_1,t},\partial_{m_2,z})$ given by (\ref{eq:P_multi}) with moment functions $m_1$ and $m_2$ of positive orders $s_1$ and $s_2$ respectively.
 
 So we assume that $P(\lambda,\zeta)=P_0(\zeta)\tilde{P}(\lambda,\zeta)$ and $\deg P_0(\zeta)=k\in\NN_0$. 
 By the definitions of the Newton polygons for differential and pseudodifferential operators we conclude that
 the Newton polygon $N(P,s_1,s_2)$ is created from $N(\tilde{P},s_1,s_2)$ by the translation on the vector $v=(ks_2,0)$, i.e.
 \begin{equation}
 \label{eq:newton}
 N(P,s_1,s_2)=N(\tilde{P},s_1,s_2) + (ks_2,0).
 \end{equation}
 
 Now we describe the shape of the Newton polygon for the operator
 $$
 \tilde{P}(\partial_{m_1,t},\partial_{m_2,z})=\prod_{\alpha=1}^{\tilde{n}}\prod_{\beta=1}^{l_{\alpha}}
   (\partial_{m_1,t}-\lambda_{\alpha\beta}(\partial_{m_2,z}))^{n_{\alpha\beta}},
 $$
 where $\lambda_{\alpha\beta}(\zeta)\sim \lambda_{\alpha\beta}\zeta^{q_{\alpha}}$ with 
  a pole order $q_{\alpha}\in\QQ$ and a leading term $\lambda_{\alpha\beta}\in\CC\setminus\{0\}$ for
  $\alpha=1,\dots,\tilde{n}$ and $\beta=1,\dots,l_{\alpha}$.
  As in the previous section we may assume that there exist exactly $N$ positive pole orders which are greater than $\frac{s_1}{s_2}$,
  say $\frac{s_1}{s_2}<q_N<\dots<q_1<\infty$.
  
  The set of vertices of $N(\tilde{P},s_1,s_2)$ consists of $N+1$ points $v_0,v_1,\dots,v_N$ such that
  $v_0=(ns_1,-n)$ and
  $$v_{\alpha}=\big((n-n_1-\dots-n_{\alpha})s_1+(n_1q_1+\dots+n_{\alpha}q_{\alpha})s_2, n_1+\dots+n_{\alpha}-n\big)$$
  for $\alpha=1,\dots,N$, where $n_{\alpha}=\sum_{\beta=1}^{l_{\alpha}}n_{\alpha\beta}$ is a number of formal solutions of the equation
  $\tilde{P}(\partial_{m_1,t},\partial_{m_2,z})u=0$, which are of Gevrey order $q_{\alpha}s_2-s_1$ (see Theorem \ref{th:gevrey}).
  Moreover, by (\ref{eq:newton}), $n_{\alpha}q_{\alpha}$ is an integer for $\alpha=1,\dots,N$. Observe also that
  the vertex $v_0$ is determined by $\partial_{m_1,t}^n$ and $v_{\alpha}$ ($\alpha=1,\dots,N$) by the operator
  $$
  \partial_{m_1,t}^{n-n_1-\dots-n_{\alpha}}\prod_{\gamma=1}^{\alpha}\prod_{\beta=1}^{l_{\gamma}}\lambda_{\gamma\beta}^{n_{\gamma\beta}}
  (\partial_{m_2,z}).
  $$
  
  It means that the boundary of $N(\tilde{P},s_1,s_2)$ consists of
  a horizontal half line $\Gamma_{0}$, $N$-segments $\Gamma_1,\dots,\Gamma_N$ and a vertical half line $\Gamma_{N+1}$. Let us denote by
  $K_{\alpha}$ the slope of
  $\Gamma_{\alpha}$.
  Observe that
  $K_0=0$, $K_{N+1}=\infty$ and 
  \begin{gather*}
  K_{\alpha}=\frac{n_{\alpha}}{s_2n_{\alpha}q_{\alpha}-s_1n_{\alpha}}=\frac{1}{q_{\alpha}s_2-s_1}\quad\textrm{for}\quad \alpha=1,\dots,N.
  \end{gather*}
  So $K_0=0<K_1<\dots <K_N<K_{N+1}=\infty$.
  
  Fix $\alpha\in\{1,\dots,N\}$. The set of points of segment $\Gamma_{\alpha}$ (with ends $v_{\alpha-1}$ and $v_{\alpha}$) is determined by the operator
  \begin{equation}
   \label{eq:prod}
  \Big(\partial_{m_1,t}^{n-n_1-\dots-n_{\alpha}}\prod_{\gamma=1}^{\alpha-1}\prod_{\beta=1}^{l_{\gamma}}
  \lambda_{\gamma\beta}^{n_{\gamma\beta}}(\partial_{m_2,z})\Big)
   \prod_{\beta=1}^{l_{\alpha}}
   (\partial_{m_1,t}-\lambda_{\alpha\beta}(\partial_{m_2,z}))^{n_{\alpha\beta}}.  
    \end{equation}
   Moreover the product $\partial_{m_1,t}^{n-n_1-\dots-n_{\alpha}}\prod_{\gamma=1}^{\alpha-1}\prod_{\beta=1}^{l_{\gamma}}\lambda_{\gamma\beta}^{n_{\gamma\beta}}(\partial_{m_2,z})$
   from (\ref{eq:prod})
   is a common part of all the operators determining the points of segment $\Gamma_{\alpha}$.
   So it is sufficient to consider the rest of the operator (\ref{eq:prod}), i.e. the operator
   $$
    \tilde{P}_{\Gamma_{\alpha}}(\partial_{m_1,t},\partial_{m_2,z}):= \prod_{\beta=1}^{l_{\alpha}}
   (\partial_{m_1,t}-\lambda_{\alpha\beta}(\partial_{m_2,z}))^{n_{\alpha\beta}}.
   $$  
  Let $\widehat{u}$ be a formal solution of (\ref{eq:inhomo0}) with $\tilde{s}_1=\tilde{s}_2=0$ determined by $g\in\Oo(D^2)$ and satisfying the decomposition
  (\ref{eq:decomposition}).  
  As in the previous section we conclude that $\widehat{u}_{\alpha\beta\gamma}$ is a formal series of Gevrey order $1/K_{\alpha}$ with respect to $t$, which satisfies
  $\tilde{P}_{\Gamma_{\alpha}}(\partial_{m_1,t},\partial_{m_2,z})u_{\alpha\beta\gamma}= g_{\alpha\beta\gamma}\in\Oo_{1,1/\kappa}(D^2)$ (for every
  $\beta\in\{1,\dots,l_{\alpha}\}$ and $\gamma\in\{1,\dots,n_{\alpha\beta}\}$). 
  
  So, the Gevrey order $1/K_{\alpha}$ of $\widehat{u}_{\alpha\beta\gamma}$ is determined by the slope of $\Gamma_{\alpha}$ and $K_{\alpha}$-summability 
  is determined by the operator $\tilde{P}_{\Gamma_{\alpha}}(\partial_{m_1,t},\partial_{m_2,z})$.
  
\bibliographystyle{siam}
\bibliography{summa}
\end{document}